\documentclass[11pt,reqno]{amsart} 

\usepackage[utf8]{inputenc}
\usepackage[a4paper]{geometry}

\usepackage{amsmath}
\usepackage{amsfonts}
\usepackage{amssymb}
\usepackage{amsthm}

\usepackage{natbib}
\usepackage{color}
\usepackage[colorlinks=true,linkcolor=blue,citecolor=blue,pdfborder={0 0 0}]{hyperref}

\usepackage{dsfont}
\usepackage{bm}

\usepackage{graphicx}
\usepackage{enumerate}

\newtheorem{theorem}{Theorem}[section]
\newtheorem{proposition}[theorem]{Proposition}
\newtheorem{lemma}[theorem]{Lemma}

\theoremstyle{definition}

\theoremstyle{remark}
\newtheorem{remark}[theorem]{Remark}

\numberwithin{equation}{section}

\newcommand{\dto}{\rightsquigarrow}

\newcommand{\bb}[1]{\mathds{#1}}%{\mathbb{#1}}

\newcommand{\reals}{\mathbb{R}}
\newcommand{\R}{\reals}

\newcommand\Prob{\mathbb{P}}    %probability
\newcommand{\Probn}{\operatorname{\Prob}_n}

\newcommand{\cd}{{\Theta}}             % a suitable compact parameter set
\newcommand{\nd}{{\Theta_{0}}}	% the set \R \times \R \times (0,\infty)
\newcommand{\nnd}{{\Theta_{0,-}}}	% the set (-\infty, 0) \times \R \times (0,\infty)
\newcommand{\pnd}{{\Theta_{0,+}}}	% the set (0, \infty) \times \R \times (0,\infty)

\newcommand{\Gb}{\operatorname{\mathbb{G}}}

% standardized GEV - temporary notation
\newcommand{\Go}{\bar{G}}
\newcommand{\ello}{\bar{\ell}}

\newcommand{\Ac}{\mathcal{A}}

 \newcommand{\Mcc}{\mathcal{M}}
\newcommand{\Xc}{\mathcal{X}}

\newcommand{\eps}{\varepsilon}
\newcommand{\abs}[1]{\left\vert{#1}\right\vert}
\newcommand{\norm}[1]{\lVert{#1}\rVert}
\newcommand{\ind}{\operatorname{\bb{1}}}%{\operatorname{1}}
\newcommand{\diff}{\mathrm{d}}
\newcommand{\point}{\,\cdot\,}

\begin{document}

\title[On the MLE for the GEV]{On the maximum likelihood estimator for the Generalized Extreme-Value distribution}

\author{Axel B\"ucher}
\address{Ruhr-Universit\"at Bochum, Fakult\"at f\"ur Mathematik, Universit\"atsstr.\ 150, 44780 Bochum, Germany}
\email{axel.buecher@rub.de}

\author{Johan Segers}
\address{Universit\'{e} catholique de Louvain, Institut de Statistique, Biostatistique et Sciences Actuarielles, Voie du Roman Pays~20, B-1348 Louvain-la-Neuve, Belgium}
\email{johan.segers@uclouvain.be}

\date{\today}

\begin{abstract}
The vanilla method in univariate extreme-value theory consists of fitting the three-parameter Generalized Extreme-Value (GEV) distribution to a sample of block maxima. 
Despite claims to the contrary,
the asymptotic normality of the maximum likelihood estimator has never been established. 
In this paper, a formal proof is given using a general result on the maximum likelihood estimator for parametric families that are differentiable in quadratic mean but whose supports depend on the parameter. An interesting side result concerns the (lack of) differentiability in quadratic mean of the GEV family.

\bigskip

\noindent
\emph{Key words.} Differentiability in quadratic mean; M-estimator; maximum likelihood; empirical process; Fisher information; Generalized Extreme-Value distribution; Lip\-schitz condition; support.

\end{abstract}

\maketitle

% \tableofcontents

% ====================
\section{Introduction}
% --------------------

Asymptotic normality of maximum likelihood estimators in regular parametric models is a classic subject, but when the support of the distribution depends on the parameter, the mathematics are not routine. The family of univariate, three-parameter Generalized Extreme-Value distributions is a case in point. Its cumulative distribution function $G_\theta$ at a parameter vector $\theta = (\gamma, \mu, \sigma) \in \R \times \R \times (0, \infty)$ is given by
\begin{equation}
\label{eq:GEV}
  G_{\gamma,  \mu, \sigma} (x) 
  = 
  G_{\gamma, 0, 1}( (x - \mu)/\sigma ),
  \qquad x \in \reals,
\end{equation}
where
\begin{align*}
  G_{\gamma, 0, 1}(z) 
%   =
%   \Go_\gamma(z)
  =
  \begin{cases}
    \exp(-e^{-z}) 
      & \text{if $\gamma = 0$}, \\
    \exp\left\{ -(1+\gamma z)^{-1/\gamma} \right\}  
      & \text{if $\gamma \ne 0$ and $1 + \gamma z > 0$}, \\
    0 & \text{if $\gamma > 0$ and $z \le -1/\gamma$}, \\
    1 & \text{if $\gamma < 0$ and $z \ge -1/\gamma$}.
  \end{cases}
\end{align*}
The support of $G_\theta$ is an interval, $S_\theta = \{x \in \reals : \sigma + \gamma (x - \mu) > 0 \}$, whose endpoints depend on $\theta$. The above parameterization, due to \cite{vonmises:1936} and \citet{jenkinson:1955}, generalizes and unifies the Fr\'echet/Gumbel/Weibull trichotomy in \citet{fisher+t:1928} across all signs of the shape parameter $\gamma$.

Fitting a generalized extreme-value distribution to a sample of annual maxima is the earliest statistical method in extreme-value theory. Various inference procedures have been explored, including quantile or probability matching \citep{gumbel:1958}, the probability weighted moment method \citep{hosking+w+w:1985}, and the maximum likelihood method \citep{prescott+w:1980, hosking:1985}.  
The present paper revisits the maximum likelihood estimator and its asymptotic distribution.

In general, deriving large-sample asymptotics of the maximum likelihood estimator for a distribution family with varying support is a difficult problem. The classical regularity conditions of \citet{cramer:1946} are not fulfilled, and the same is true for the weaker Lipschitz conditions stemming from empirical process theory \citep[Theorem~5.39]{vdV98}. Up to our knowledge, a general theory does not exist.

\citet{smith:1985} was the first to consider maximum likelihood estimation in a large class of non-regular parametric families on the real line. More precisely, he studied densities which can be written as
\[
  f(x; \mu, \phi) = (x-\mu)^{\alpha-1} \, g(x-\mu; \phi),
  \qquad x \in (\mu, \infty),
\]
where $\mu$ is a location parameter, $\phi$ is a parameter vector, $\alpha = \alpha(\phi)$ is a smooth function of $\phi$, and $g$ is a known function. The formulation is general enough to include location versions of the Weibull, Gamma, Beta and log-Gamma distribution. Depending on the value of $\alpha$, \citet{smith:1985} shows that the rate of convergence and the asymptotic distribution of the maximum likelihood estimator for $\mu$ is $n^{-1/2}$ and normal or faster than $n^{-1/2}$ and non-normal, respectively. 

The above class of densities, however, does not include the three-parameter General\-ized Extreme-Value distribution. Still, after a reparameterization, the formulation does include the case $-1/2 < \gamma < 0$. \citet[p.~88]{smith:1985} claims that similar arguments will work for $\gamma > 0$  too, but details are omitted, while the case $\gamma = 0$ is not mentioned.
It could be argued that the case $\gamma < 0$ treated in \cite{smith:1985} is indeed the most difficult one. However, we believe that his proof contains a gap which cannot be easily remedied. Uniformity in certain convergence statements is claimed but not proven, and we doubt that it can be done with the techniques used in the article. 
We justify our point of view at the end of Section~\ref{sec:mle}.

Our contribution consists of a general result on the asymptotic normality of the maximum likelihood estimator for parametric models whose support may depend on the parameter. We apply the result to the three-parameter GEV family. An interesting side result concerns the differentiability in quadratic mean of that family; for the Gumbel case, see \cite{marohn:1994, marohn:2000}.

As in \cite{smith:1985}, we need to show that certain limit relations hold uniformly over specific subsets of the parameter space. To do so, we use empirical process machinery borrowed from \citet{vdV98}. The approach requires exercising control on the entropy of certain function classes. We obtain such control via carefully formulated Lipschitz conditions. Checking these conditions for the three-parameter GEV family turns out to be surprisingly tedious.

The set-up in our paper is that of independent random samples drawn from a distribution within the parametric family itself. For the GEV family, \cite{Dom15} considers the more realistic setting of triangular arrays of block maxima extracted from 
independent random variables sampled from a distribution in the domain of attraction of a GEV distribution. He shows consistency of the maximum likelihood estimator, and our Proposition~\ref{prop:cons} below is not far from being a special case of his Theorem~2. In \cite{BucSeg15}, we go one step further and also establish asymptotic normality, even for block maxima extracted from time series. In that paper, however, we do not consider the full three-parameter GEV family but focus on the two-parameter Fr\'echet sub-family. The support of the latter is equal to the positive half-line for all values of the parameter vector. The complications that motivate the present paper do therefore not arise.

Asymptotic normality of the maximum likelihood estimator in general parametric families with parameter-dependent support is asserted in Section~\ref{sec:mle}. The proof is given in Appendix~\ref{app:mlelip}. We specialize the theory to the GEV family in Section~\ref{sec:GEV} and we provide an outline for possible applications to other parametric families in Section~\ref{sec:of}. The reasonings and calculations needed to work out the results for the GEV family are sufficiently complicated to fill Appendices~\ref{app:GEV} to~\ref{app:an}.

% =======================================================================
\section{Maximum likelihood estimator of a support-determining parameter}
% -----------------------------------------------------------------------
\label{sec:mle}

Let $( P_\theta : \theta \in \Theta)$ be a family of distributions on some measurable space $(\Xc, \Ac)$, where $\Theta \subset \R^k$. Suppose that each $P_\theta$ has a density $p_\theta$ with respect to some common dominating measure $\mu$. The model $( P_\theta: \theta \in \Theta )$ is said to be \emph{differentiable in quadratic mean} at an inner point $\theta_0\in\Theta$ if there exists a measurable function $\dot{\ell}_{\theta_0} : \Xc \to \reals^k$ such that
\begin{align}
\label{eq:dqm}
  \int_\Xc 
    \Bigl\{ 
      \sqrt{p_{\theta_0+h}} - \sqrt {p_{\theta_0}} 
      - \frac{1}{2} h^T \dot \ell_{\theta_0} \sqrt{p_{\theta_0}} 
    \Bigr\}^2 \,
  d\mu = o(\norm{h}^2),
  \qquad h \to 0.
\end{align}
The function $\dot{\ell}_{\theta_0}$ is referred to as the \emph{score vector}. Its components are square-integrable with respect to $P_{\theta_0}$ and have mean zero. The covariance matrix $I_{\theta_0} = P_{\theta_0} \dot{\ell}_{\theta_0} \dot{\ell}_{\theta_0}^T$ is called
\emph{Fisher information matrix}. Differentiability in quadratic mean with invertible Fisher information matrix implies an asymptotic expansion of the log-likelihood ratio statistic implying \emph{local asymptotic normality} of the associated sequence of statistical experiments. For more on the statistical implications of this property, see for instance \citet{LeCam86} and \citet[Chapters~7--8]{vdV98}. In \cite{marohn:2000}, local asymptotic normality is exploited to construct asymptotically efficient tests of the Gumbel hypothesis.

Let $X_1, \dots, X_n$ be an i.i.d.\ sample from $P_{\theta_0}$. By definition, any global maximizer of the $\bar \R$-valued function $\theta \mapsto \Probn \log p_\theta = \sum_{i=1}^n \log p_\theta(X_i)$ is called a maximum likelihood estimator. Here, it is implicitly assumed that the set of global maximizers is non-empty. This is easily satisfied in most situations where $\Theta$ is compact. Otherwise, a compactification argument often works \citep[Chapter~5]{vdV98} or one can restrict attention to local maximizers instead.

Under ``regularity conditions'', the asymptotic distribution of $\sqrt{n} (\hat{\theta}_n - \theta_0)$ is $N_k( 0, I_{\theta_0}^{-1})$. Various sets of sufficient conditions have been proposed in the literature: Cram\'er's classical conditions require the existence of and bounds on the third-order derivatives of $\theta \mapsto \ell_\theta(x)= \log p_\theta(x)$. Theorem~5.39 in \citet{vdV98} only demands differentiability in quadratic mean plus a Lipschitz condition on $\theta \mapsto \ell_\theta(x)$, for all $x$ in the support of $P_{\theta_0}$ and all $\theta$ in a neighbourhood of $\theta_0$.

However, if the support, $\{ x : p_\theta(x) > 0 \}$, of $P_\theta$ depends on $\theta$, an approach based on smoothness of $\theta \mapsto \ell_\theta(x)$ is not possible. Indeed, for any neighbourhood of $\theta_0$, we may find $\theta$ in that neighbourhood and $x \in \Xc$ such that $p_{\theta_0}(x) > 0$ but $p_\theta(x) = 0$ and thus $\ell_\theta(x) = -\infty$. For the same reason, empirical processes indexed by $\theta$ in a neighbourhood of $\theta_0$ will have unbounded trajectories with probability tending to one. But weak convergence of such empirical processes in the space of bounded functions is a crucial ingredient in the proofs of many theorems on the asymptotics of M-estimators.

A possible way to circumvent these problems consists of replacing the criterion function $\ell_\theta$ by
\begin{align} 
\label{eq:m}
  m_\theta
  = 
  2 \log \left( \frac {p_\theta + p_{\theta_0} }{2p_{\theta_0}} \right),
\end{align}
a real-valued function on $\{ x : p_{\theta_0}(x) > 0 \}$ satisfying $m_\theta(x) \ge -2\log(2)$ for all such $x$. Using Corollary~5.53 in \cite{vdV98}, the function $m_\theta$ can be used to obtain the $O_{\Prob}(1/\sqrt{n})$ rate of convergence of the maximum likelihood estimator for the GEV parameter $\theta$, see Proposition~\ref{prop:rate} below. The method of proof does not allow to obtain the asymptotic distribution of the estimator, however.

Yet, if the rate of convergence $O_{\Prob}(1/\sqrt{n})$ has been established, then our next proposition provides alternative conditions (on $\ell_\theta$) which can be used to prove asymptotic normality of the maximum likelihood estimator. In doing so, we follow the well-known three-step strategy for deriving the asymptotic distribution of M-estimators (see, e.g., \citealp{vdG09}): (1) prove consistency, then (2) derive the rate of convergence and finally (3) derive the exact limiting distribution. In Section~\ref{sec:GEV}, the three steps are worked out for the GEV family by an application of the following proposition for step~(3).

The support of $P_\theta$ is $S_\theta=\{ x : p_\theta(x) > 0 \}$, a subset of $\Xc$ which may vary with $\theta \in \Theta$. Let $U_\eps(\theta) = \{\theta' \in \Theta: \norm{ \theta-\theta'} < \eps\}$ and put
\[
  \bar S(\eps) 
  = 
  \bar S(\theta_0,\eps) 
  = 
  \bigcap_{\theta \in U_{\eps}(\theta_0)} S_{\theta}
  =
  \{ x : \text{$p_\theta(x) > 0$ for all $\theta$ such that $\norm{\theta - \theta_0} < \eps$} \}.
\]
The set $\bar{S}(\eps)$ is the intersection of the supports of the distributions $P_\theta$ for all $\theta$ in an $\eps$-ball centered at $\theta_0$.
The following proposition claims the asymptotic normality of the maximum likelihood estimator under a Lipschitz condition on the function $\theta \mapsto \ell_{\theta}(x)$ for $x$ and $\theta$ in a certain range.

\begin{proposition}
\label{prop:mlelip}
Let $( P_\theta: \theta \in \Theta )$ be a parametric model on $(\Xc, \Ac)$, with $\Theta \subset \R^k$, and let $\theta_0$ be an inner point of $\Theta$. Let $X_1, X_2, \ldots$ be a sequence of independent and identically distributed random elements in $\Xc$ with common distribution $P_{\theta_0}$ and let $\hat{\theta}_n$ be a maximum likelihood estimator based on $X_1, \ldots, X_n$. Assume the following conditions:
\begin{enumerate}[(a)]
\item
The model is differentiable in quadratic mean at $\theta_0$ with score vector $\dot{\ell}_{\theta_0}$ and non-singular Fisher information matrix $I_{\theta_0}$.
\item
We have
\begin{align} 
\label{eq:suppeps}
  P_{\theta_0} \bigl( \Xc \setminus \bar S(\eps) \bigr) 
  = o(\eps^2), 
  \qquad \eps \downarrow 0.
\end{align} 
\item
There exist $\dot \ell \in L_2(P_{\theta_0})$ and $\eps_0>0$ such that, for every $0<\eps<\eps_0$, 
\begin{align} 
\label{eq:lip}
  \abs{ \ell_{\theta_1}(x) - \ell_{\theta_2}(x) } 
  \le 
  \dot \ell (x) \norm{ \theta_1 - \theta_2 } \qquad 
  \text{for all $x \in \bar S({2\eps})$ and $\theta_1, \theta_2 \in U_{\eps}(\theta_0)$}.
\end{align}
\item
We have $\sqrt n(\hat \theta_n-\theta_0)=O_\Prob(1)$ as $n \to \infty$.
\end{enumerate}
Then 
\begin{equation}
\label{eq:mlelip:an}
  \sqrt n ( \hat \theta_n - \theta_0) 
  = 
  I_{\theta_0}^{-1} \frac{1}{\sqrt n} \sum_{i=1}^n \dot \ell_{\theta_0}(X_i) + o_\Prob(1)
  \dto
  N_k( 0, I_{\theta_0}^{-1} ),
  \qquad n \to \infty.
\end{equation}
\end{proposition}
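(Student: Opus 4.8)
The plan is to carry out the usual ``argmax'' derivation of the limit law of an M-estimator — localize to a $1/\sqrt n$-neighbourhood, expand the criterion quadratically there, and identify the maximizer — but with the log-likelihood criterion restricted, on an event of probability tending to one, to the stable support $\bar S(2\eps_n)$, on which $\ell_\theta$ is finite and the Lipschitz bound (c) applies. Here condition (d) plays the role of the (already established) rate of convergence, condition (b) controls the parameter-dependent support, and condition (c) controls the entropy of the relevant function classes.

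First, localize. Given $\delta>0$, pick $M<\infty$ with $\limsup_n\Prob(\norm{\sqrt n(\hat\theta_n-\theta_0)}>M)<\delta$ by (d), and set $\eps_n=2M/\sqrt n$. Condition (b) yields $n\,P_{\theta_0}(\Xc\setminus\bar S(2\eps_n))=n\cdot o(\eps_n^2)=o(1)$, so on an event of probability $1-o(1)$ all of $X_1,\dots,X_n$ lie in $\bar S(2\eps_n)$; there $\Probn\ell_\theta=\Probn[\ell_\theta\ind_{\bar S(2\eps_n)}]$ is finite for every $\theta\in U_{2\eps_n}(\theta_0)$, and (c) with $\theta_2=\theta_0$ gives $\abs{\ell_\theta-\ell_{\theta_0}}\le\dot\ell\,\norm{\theta-\theta_0}$ pointwise on $\bar S(2\eps_n)$. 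Reparametrizing $\theta=\theta_0+t/\sqrt n$, the rescaled estimator $\hat t_n=\sqrt n(\hat\theta_n-\theta_0)$ maximizes $L_n(t):=n\,\Probn(\ell_{\theta_0+t/\sqrt n}-\ell_{\theta_0})$ over $\{\norm t\le M\}$ on an event whose probability is at least $1-\delta$ asymptotically.

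The core is the uniform quadratic expansion
\[
  L_n(t)=t^T\Gb_n\dot\ell_{\theta_0}-\tfrac12\,t^TI_{\theta_0}t+o_\Prob(1),
  \qquad\text{uniformly over }\norm t\le M,
\]
with $\Gb_n\dot\ell_{\theta_0}=n^{-1/2}\sum_{i=1}^n\dot\ell_{\theta_0}(X_i)$. On the good event, write $n\Probn=nP_{\theta_0}+\sqrt n\,\Gb_n$ applied to $(\ell_{\theta_0+t/\sqrt n}-\ell_{\theta_0})\ind_{\bar S(2\eps_n)}$ and, in the empirical part, add and subtract $t^T\dot\ell_{\theta_0}$. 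This leaves three pieces. The linear term $t^T\Gb_n[\dot\ell_{\theta_0}\ind_{\bar S(2\eps_n)}]$ differs from $t^T\Gb_n\dot\ell_{\theta_0}$ by $o_\Prob(1)$, since $P_{\theta_0}[\norm{\dot\ell_{\theta_0}}^2\ind_{\Xc\setminus\bar S(2\eps_n)}]\to0$ (dominated convergence, using (b) so that $\ind_{\Xc\setminus\bar S(2\eps_n)}\to0$ $P_{\theta_0}$-a.e.). The remainder empirical process $\Gb_n f_{n,t}$, with $f_{n,t}=\{\sqrt n(\ell_{\theta_0+t/\sqrt n}-\ell_{\theta_0})-t^T\dot\ell_{\theta_0}\}\ind_{\bar S(2\eps_n)}$, is handled by a maximal inequality for classes changing with $n$: on $\bar S(2\eps_n)$ the Lipschitz bound (c) supplies the $n$-free envelope $\abs{f_{n,t}}\le M(\dot\ell+\norm{\dot\ell_{\theta_0}})\in L_2(P_{\theta_0})$ and the increment bound $\abs{f_{n,t}-f_{n,s}}\le\norm{t-s}(\dot\ell+\norm{\dot\ell_{\theta_0}})$, so the classes $\{f_{n,t}:\norm t\le M\}$ have a uniformly bounded bracketing integral, and together with $\sup_{\norm t\le M}P_{\theta_0}f_{n,t}^2\to0$ this gives $\sup_{\norm t\le M}\abs{\Gb_n f_{n,t}}=o_\Prob(1)$. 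Finally the deterministic drift $nP_{\theta_0}[(\ell_{\theta_0+t/\sqrt n}-\ell_{\theta_0})\ind_{\bar S(2\eps_n)}]$ equals $-\tfrac12 t^TI_{\theta_0}t+o(1)$ uniformly: on $\bar S(2\eps_n)$ write $\ell_\theta-\ell_{\theta_0}=2\log(1+\tfrac12 v_\theta)$ with $v_\theta=2(\sqrt{p_\theta/p_{\theta_0}}-1)$, Taylor-expand the logarithm, and invoke (a); this is a localized, truncated version of the local-asymptotic-normality computation of \citep[Theorem~7.2]{vdV98}, the truncation to $\bar S(2\eps_n)$ being negligible again by (b).

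Once the expansion is in hand, the end is standard: the quadratic $t\mapsto t^T\Gb_n\dot\ell_{\theta_0}-\tfrac12 t^TI_{\theta_0}t$ is maximized at $\hat t^*_n:=I_{\theta_0}^{-1}\Gb_n\dot\ell_{\theta_0}=O_\Prob(1)$ (central limit theorem, $I_{\theta_0}$ non-singular), which lies in $\{\norm t<M\}$ with probability at least $1-\delta$ for $M$ large; on the intersection of all good events $L_n(\hat t_n)\ge L_n(\hat t^*_n)$, so the uniform approximation above and $x^TI_{\theta_0}x\ge\lambda_{\min}(I_{\theta_0})\norm x^2$ force $\norm{\hat t_n-\hat t^*_n}=o_\Prob(1)$; letting $\delta\downarrow0$ yields $\sqrt n(\hat\theta_n-\theta_0)=I_{\theta_0}^{-1}n^{-1/2}\sum_{i=1}^n\dot\ell_{\theta_0}(X_i)+o_\Prob(1)$, and the central limit theorem with $\var(\dot\ell_{\theta_0})=I_{\theta_0}$ gives \eqref{eq:mlelip:an}. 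I expect the main obstacle to be the two estimates that make the restriction to $\bar S(2\eps_n)$ asymptotically invisible while the Lipschitz majorant $\dot\ell$ itself does not shrink: proving $\sup_{\norm t\le M}P_{\theta_0}f_{n,t}^2\to0$ (which, given the bound $\abs{f_{n,t}}\le M(\dot\ell+\norm{\dot\ell_{\theta_0}})$, requires splitting the domain according to whether $\dot\ell$ is below or above a level of order $\sqrt n$ and using uniform integrability of $\dot\ell^2$, on top of (a)), and the parallel evaluation of the drift term. In both places the sharpened support rate $o(\eps^2)$ in (b) — as against the $O(\eps^2)$ that (a) delivers unconditionally — is precisely what is needed.
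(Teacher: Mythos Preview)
Your proposal is correct and follows essentially the same route as the paper's proof: truncate the log-likelihood to the stable support $\bar S(2\eps_n)$ using condition~(b), control the entropy of the truncated differences via the Lipschitz bound~(c) and a bracketing argument (the paper invokes Theorem~19.28 / Lemma~19.31 in \cite{vdV98}), obtain the quadratic drift from differentiability in quadratic mean via the local asymptotic normality expansion (Theorem~7.2 in \cite{vdV98}), and conclude by comparing $\hat t_n$ with the maximizer $I_{\theta_0}^{-1}\Gb_n\dot\ell_{\theta_0}$ on an event of probability at least $1-\delta$, then letting $\delta\downarrow0$.

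One small remark: the step you flag as the main obstacle --- showing $\sup_{\norm t\le M}P_{\theta_0}f_{n,t}^2\to0$ --- is lighter than you suggest. The paper establishes the $L_2(P_{\theta_0})$-convergence in your notation $P_{\theta_0}f_{n,t_n}^2\to0$ along any convergent sequence $t_n\to t$ (their equation~\eqref{eq:mtheta}) simply by noting that differentiability in quadratic mean gives convergence in $P_{\theta_0}$-probability, and then upgrading to $L_2$ via dominated convergence with the Lipschitz envelope $M\dot\ell$; no splitting of the domain according to the size of $\dot\ell$ is needed. Uniformity over $\norm t\le M$ then follows by a standard subsequence/compactness argument. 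The paper also organizes the empirical-process part slightly differently --- proving finite-dimensional convergence to zero plus asymptotic tightness of the process $h\mapsto\Gb_n f_{n,h}$, rather than appealing directly to a maximal inequality with $\sup_t P_{\theta_0}f_{n,t}^2\to0$ as hypothesis --- but the two formulations are equivalent and rest on the same bracketing bound coming from~(c).
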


Condition~(b) controls the size of that part of the support of $P_{\theta_0}$ that is not contained in the support of $P_\theta$ for some $\theta$ in a neighbourhood of $\theta_0$. Condition~(c) is a Lipschitz condition on $\theta \mapsto \ell_\theta(x)$ in which the range of $x$ and $\theta$ is specified in such a way that $p_{\theta'}(x) > 0$ for all $\theta'$ in a neighbourhood of $\theta$. In view of~(b) and~(d), the part of the range of $x$ that has been left out is asymptotically negligible.

The proof of Proposition~\ref{prop:mlelip} adapts arguments from the proofs of Theorems~5.23,~5.39 and~7.2 and Lemma~19.31 in \cite{vdV98}. It is given in detail in Appendix~\ref{app:mlelip}.

\subsection*{The role of uniformity}

Theorem~3(i) in \citet{smith:1985} states the asymptotic normality of the maximum likelihood estimator for a certain class of real-valued parametric families whose supports depend in a smooth way on the parameter. As our paper may have the appearance of needlessly repeating known results, we feel obliged to explain our motives. The matter is not easy to explain, and we invite the reader to consult Smith's article while reading this remark. The pages in \cite{smith:1985} we will be needing are 71, 75--76, 80, and 82, and it will be convenient to discuss them in more or less reverse order.

The proof of Theorem~3(i), stated on page~80, comprises only three lines at the bottom of page~82. The heart of the argument is the claim that the second-order derivatives of the log-likelihood are asymptotically constant in a sequence of shrinking neighbourhoods of the true parameter. We agree with the author that if the claim on the second-order derivatives is true, then the asymptotic normality can be proven too, via a second-order Taylor expansion of the log-likelihood around the true parameter. Another good example of this technique is the proof of Theorem~5.41 in \citet{vdV98}.

As a justification of the assertion on the asymptotic constancy of the second-order derivatives of the log-likelihood, the author refers to the proof of his Theorem~1. Studying the proof of that theorem, we find the assertion in Lemma~4, items I(i), II(i), and III, on page~75. Only for item I(i), a proof is given (page~76), while the proof of the other items is said to be similar.
The proof of item I(i) in Lemma~4 relies on Assumption~7 on page~71. This assumption concerns the $L_1$-continuity of the second-order derivatives of the log-likelihood of a single observation as a function of the parameter vector. It is here that we wish to formulate an objection.

After careful reflection, we are convinced that Assumption~7 is insufficient to conclude that the \emph{supremum of the sample averages} on line~4 of page~76 converges to zero. Indeed, the \emph{expectation of the supremum} of a collection of random variables is in general larger than the \emph{supremum of the expectations} of those random variables. In fact, the difference is fundamental and lies at the heart of what makes proving uniform laws of large numbers and uniform central limit theorems so challenging. Controlling expectations of suprema is the topic of maximal inequalities. In \citet[Section~19.6]{vdV98}, such control is exercised by bracketing integrals of the function classes over which suprema are to be taken.

Bracketing integrals appear in the proof of our Proposition~\ref{prop:an}. We find bounds on such integrals by relying on the Donsker theory in Chapter~19 in \cite{vdV98}. The complexity of the function classes under consideration is tempered by means of uniform Lipschitz conditions. This explains the appearance of our Lipschitz condition~\eqref{eq:lip}.

% =======================================================
\section{Maximum likelihood estimator for the GEV family}
% -------------------------------------------------------
\label{sec:GEV}

Let $X_1, \ldots, X_n$ be an independent random sample from the Generalized Extreme-Value distribution $G_{\theta_0}$ in \eqref{eq:GEV} with unknown parameter $\theta_0 = (\gamma_0, \mu_0, \sigma_0)$, to be estimated. Expressions for the log-density $\ell_\theta(x) = \log p_\theta(x)$ and the score vector $\dot{\ell}_\theta(x) = (\partial_\gamma \ell_\theta(x), \partial_\mu \ell_\theta(x), \partial_\sigma \ell_\theta(x))^T$ are given in Appendix~\ref{app:GEV}.

Consider the maximum likelihood estimator, $\hat{\theta}_n$, defined by maximizing the log-likelihood $\theta \mapsto n^{-1} \sum_{i=1}^n \ell_{\theta}(X_i) = \Probn \ell_\theta$. As argued by \cite{Dom15},  it is not guaranteed that the log-likelihood attains a unique, global maximum. For that reason, he rather defines any local maximizer of the log-likelihood over $\nd = \R \times \R \times (0,\infty)$ as a maximum likelihood estimator. His main result then states that, for block maxima constructed from an underlying i.i.d.\ series, one can always find a strongly consistent local maximizer, as long as $\gamma_0$ is strictly larger than $-1$. Going into his proofs, we see how that local maximizer is constructed: by restricting the parameter space to some arbitrary compact set $\cd\subset(-1,\infty) \times \R \times (0,\infty)$ which contains $\theta_0$ as an interior point, it is guaranteed that the $[-\infty, \infty)$-valued, continuous function $\theta \mapsto \Probn \ell_\theta$ attains its global maximum over $\cd$. That global maximum is then shown to be a local maximum over $\nd=\R \times \R \times (0,\infty)$, eventually, and to be strongly consistent.

In the subsequent parts of this paper, we could also work along Dombry's lines, re-defining maximum likelihood estimators as local rather than global maxima. However, we prefer to keep track of the above-mentioned construction within his and our proofs.  For the following proposition concerning consistency, we therefore consider the restriction of the parameter space to an arbitrarily large compact set $\cd \subset (-1,\infty) \times \R \times (0,\infty)$ that contains the true value $\theta_0\in (-1,\infty) \times \R \times (0,\infty)$ in its interior. In practice, maximizers produced by numerical algorithms are local maximizers anyway, so that the restriction of the parameter space to an arbitrarily large compact set is hardly a restriction, at least if $\gamma_0$ is known to be larger than $-1$; see \citet[Remark~4]{Dom15} for the case $\gamma_0 \le -1$.

\begin{proposition}[Consistency]
\label{prop:cons}
Let $X_1, X_2, \ldots$ be an independent random sample from the $G_{\theta_0}$ distribution, with $\theta_0 = (\gamma_0, \mu_0, \sigma_0) \in (-1, \infty) \times \reals \times (0, \infty)$.
For any compact set $\cd \subset (-1, \infty) \times \reals \times (0, \infty)$ such that $\theta_0$ is in the interior of $\cd$ and for any estimator sequence $\hat{\theta}_n$ such that $\Probn \ell_{\hat{\theta}_n} = \max_{\theta \in \cd} \Probn \ell_\theta$, such maximizers always existing, we have $\hat{\theta}_n \to \theta_0$ almost surely as $n \to \infty$.
\end{proposition}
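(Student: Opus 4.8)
The plan is to prove consistency of the M-estimator $\hat\theta_n$ by appealing to the standard Wald-type argument for M-estimators over a compact parameter set, in the version of \citet[Theorem~5.7 or Theorem~5.14]{vdV98}. The criterion function is $\theta \mapsto \Probn \ell_\theta$, which is $[-\infty,\infty)$-valued, and the population criterion is $\theta \mapsto \Exp_{\theta_0}[\ell_\theta(X_1)] = P_{\theta_0}\ell_\theta$. The key population fact is the information inequality: $P_{\theta_0}\ell_\theta \le P_{\theta_0}\ell_{\theta_0}$ with equality if and only if $G_\theta = G_{\theta_0}$, i.e.\ $\theta = \theta_0$ by identifiability of the GEV parameterization. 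So $\theta_0$ is the unique maximizer of the population criterion over $\cd$; the subtlety is that this maximum equals $+\infty$ in one direction only in the sense that $P_{\theta_0}\ell_\theta$ may equal $-\infty$ for $\theta$ whose support does not contain $S_{\theta_0}$ — it is never $+\infty$, because of the Kullback--Leibler bound. First I would verify this well-definedness and the identifiability/uniqueness, and check that $P_{\theta_0}\ell_{\theta_0} = \Exp_{\theta_0}[\log p_{\theta_0}(X_1)]$ is finite, which is a direct computation with the GEV density (the negative entropy is finite for $\gamma_0 > -1$).

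Next I would establish the uniform law of large numbers $\sup_{\theta \in \cd} \abs{\Probn \ell_\theta - P_{\theta_0}\ell_\theta} \to 0$ almost surely, in a truncated sense appropriate to a criterion that can take the value $-\infty$. Because $\ell_\theta(x) \to -\infty$ as $x$ approaches the boundary of $S_\theta$, the family $\{\ell_\theta : \theta \in \cd\}$ is not uniformly bounded and does not admit a single integrable envelope. The standard fix (used by \citet{Dom15}, and going back to Wald) is to work with $\ell_\theta \wedge M$ for a large constant $M$: the truncated functions $\{\ell_\theta \wedge M : \theta \in \cd\}$ are dominated by an integrable envelope and are continuous in $\theta$ for each fixed $x$ (at least for $x$ in the interior of $S_\theta$, and the boundary has $P_{\theta_0}$-measure zero), so Glivenko--Cantelli / the uniform SLLN of \citet[Example~19.8]{vdV98} applies to give $\sup_{\theta\in\cd}\abs{\Probn(\ell_\theta\wedge M) - P_{\theta_0}(\ell_\theta\wedge M)} \to 0$ almost surely. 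Combined with the monotone convergence fact that $P_{\theta_0}(\ell_\theta\wedge M) \downarrow P_{\theta_0}\ell_\theta$ uniformly enough on $\cd$ as $M \to \infty$ (here one needs a mild equicontinuity / compactness argument, or one argues pointwise and patches), this yields the required one-sided control: for every $\delta > 0$ there is $M$ and an almost sure event on which, eventually, $\Probn\ell_{\hat\theta_n} \le \Probn(\ell_{\hat\theta_n}\wedge M) \le P_{\theta_0}(\ell_{\hat\theta_n}\wedge M) + \delta$.

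From here the argument is the usual one: since $\Probn\ell_{\hat\theta_n} \ge \Probn\ell_{\theta_0} \to P_{\theta_0}\ell_{\theta_0}$ almost surely (by the ordinary SLLN applied to the single integrable function $\ell_{\theta_0}$), we get that, along the almost sure event, $\limsup_n P_{\theta_0}(\ell_{\hat\theta_n}\wedge M) \ge P_{\theta_0}\ell_{\theta_0} - \delta$. If $\hat\theta_n$ did not converge to $\theta_0$, compactness of $\cd$ would give a subsequence $\hat\theta_{n_k} \to \theta_* \ne \theta_0$; by upper semicontinuity of $\theta \mapsto P_{\theta_0}(\ell_\theta \wedge M)$ (dominated convergence plus continuity of $\ell_\theta(x)$ in $\theta$ off a null set) this forces $P_{\theta_0}(\ell_{\theta_*}\wedge M) \ge P_{\theta_0}\ell_{\theta_0} - \delta$ for every $M$, hence $P_{\theta_0}\ell_{\theta_*} \ge P_{\theta_0}\ell_{\theta_0} - \delta$, and letting $\delta \downarrow 0$ contradicts the strict information inequality $P_{\theta_0}\ell_{\theta_*} < P_{\theta_0}\ell_{\theta_0}$. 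Therefore $\hat\theta_n \to \theta_0$ almost surely.

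The main obstacle I anticipate is the handling of the $-\infty$ values of the criterion near the parameter-dependent support boundaries: one must be careful that the truncation $M$ and the uniform SLLN interact correctly, in particular that the semicontinuity of the truncated population criterion is uniform over $\cd$ and that one can let $M\to\infty$ without losing the strict separation of $\theta_0$ from the rest of $\cd$. A secondary, more technical point is verifying that $\theta \mapsto \ell_\theta(x)$ is continuous for $P_{\theta_0}$-almost every $x$ on all of $\cd$, which requires checking the GEV log-density formulas (given in Appendix~\ref{app:GEV}) including the transition at $\gamma = 0$; the continuity there follows from the smoothness of the GEV family in $\gamma$, but it has to be stated. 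Since the conclusion and most of this machinery overlap with \citet[Theorem~2]{Dom15}, I expect the write-up to be short, citing Dombry's ULLN and information-inequality steps where possible and only spelling out the reduction to the fixed compact set $\cd$ containing $\theta_0$ in its interior, which is exactly the construction quoted in the paragraph preceding the proposition.
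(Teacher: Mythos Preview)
Your approach is essentially the paper's: both rely on Dombry's Wald-type consistency argument over a compact $\Theta$. The paper's write-up is more economical, however. Rather than re-deriving the uniform SLLN and information-inequality steps, it simply invokes Dombry's Theorem~2 and Proposition~2 with block size $m(n)\equiv 1$ and then explains why the two places in Dombry's proof that require $m(n)\to\infty$ or $m(n)/\log n\to\infty$ become trivial here: the domain-of-attraction limit in his Lemmas~2--3 is an identity when $F=G_{\theta_0}$, and his Lemma~4 reduces to the ordinary strong law for the i.i.d.\ sequence $\bar\ell_{\gamma_0}(Z_i) = (1+\gamma_0)\log E_i - E_i$ with $E_i$ unit exponential.

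One concrete slip to fix: your truncation goes the wrong way. The obstruction is that $\ell_\theta(x)=-\infty$ off $S_\theta$, so you need $\ell_\theta\vee(-M)$ (truncation from \emph{below}), not $\ell_\theta\wedge M$. With $\vee(-M)$ you get the inequality in the direction you want, $\Probn\ell_{\hat\theta_n}\le\Probn(\ell_{\hat\theta_n}\vee(-M))$, the truncated class has an integrable envelope (upper envelope $\sup_{\theta\in\Theta}\ell_\theta$ is $P_{\theta_0}$-integrable for compact $\Theta\subset(-1,\infty)\times\reals\times(0,\infty)$, lower bound is $-M$), and the monotone limit $P_{\theta_0}(\ell_\theta\vee(-M))\downarrow P_{\theta_0}\ell_\theta$ matches the direction you wrote. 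As you have it, $\ell_\theta\wedge M\le\ell_\theta$, so the chain $\Probn\ell_{\hat\theta_n}\le\Probn(\ell_{\hat\theta_n}\wedge M)$ is reversed and the argument does not close.
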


\begin{proof}[Proof of Proposition~\ref{prop:cons}]
Since an extreme-value distribution is in its own domain of attraction, Proposition~\ref{prop:cons} is in fact a combination of Theorem~2 and Proposition~2 in \cite{Dom15} with block size sequence $m(n) \equiv 1$ and with $a_m = \sigma_0$ and $b_m = \mu_0$. However, such a choice for $m(n)$ is prohibited in the cited theorem because of its condition~(5), requiring that $m(n) / \log n \to \infty$ as $n \to \infty$. Therefore, we need to pass through the proof of the theorem and adapt where necessary.

First, the hypothesis $m(n) \to \infty$ is used in Lemmas~2 and~3 to apply the domain-of-attraction condition that $F^m(a_m x + b_m) \to \Go_{\gamma_0}(x)$ as $m \to \infty$, where $\Go_{\gamma} = G_{\gamma,0,1}$. But in our case, $F = G_{\gamma_0,\mu_0,\sigma_0}$, and thus $F(\sigma_0 x + \mu_0) = \Go_{\gamma_0}(x)$ already, without having to pass through the limit.

Further, as explained in Remark~3 in \cite{Dom15}, the condition $m(n) / \log n \to \infty$ appears only in the proof of Lemma~4 in the same paper. Writing $\ello_{\gamma} = \ell_{\gamma,0,1}$ and $Z_i = (X_i - \mu_0)/\sigma_0$, the claim of that lemma is that
\[
  \lim_{n \to \infty}
  \frac{1}{n} \sum_{i=1}^n 
  \ello_{\gamma_0}( Z_i )
  =
  \int_{\reals} \ello_{\gamma_0} ( z ) \, \diff \Go_{\gamma_0}(z),
  \qquad \text{almost surely.}
\]
But the random variables $Z_1, Z_2, \ldots$ are independent and identically distributed with common distribution $\Go_{\gamma_0}$, so that the claim follows from the strong law of large numbers. Note that the random variables $E_i = (1+\gamma Z_i)^{-1/\gamma}$ have a unit Exponential distribution and that $\ello_{\gamma_0}(Z_i) = (1+\gamma) \log(E_i) - E_i$, the absolute value of which has a finite expectation.
\end{proof}

To prove the asymptotic normality of the maximum likelihood estimator, we apply  Proposition~\ref{prop:mlelip}. To that end, we need to check a number of conditions, one of which, differentiability in quadratic mean, is interesting in its own right.

\begin{proposition}[Differentiability in quadratic mean]
\label{prop:dqm}
The three-parameter GEV family $\{ G_{(\gamma, \mu, \sigma)} : (\gamma, \mu, \sigma) \in \nd= \R \times \R \times (0,\infty) \}$ is differentiable in quadratic mean at $\theta_0 = (\gamma_0, \mu_0, \sigma_0)$ if and only if $\gamma_0 > -1/2$. In that case, the score vector $\dot{\ell}_{\theta_0}(x)$ is equal to the gradient of the map $\theta \mapsto \ell_\theta(x)$ at $\theta = \theta_0$ for $x$ such that $\sigma_0 + \gamma_0 (x - \mu_0) > 0$ and equal to $0$ otherwise.
\end{proposition}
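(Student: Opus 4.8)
The plan is to verify the defining relation \eqref{eq:dqm} directly after reducing to a canonical situation, and to identify precisely where the condition $\gamma_0>-1/2$ enters. First I would exploit the location-scale-plus-shape structure: by the chain rule it suffices to treat differentiability in quadratic mean separately with respect to each of $\gamma$, $\mu$, $\sigma$, or, better, to reparameterize so that the block maxima are standardized. Writing $Z=(X-\mu_0)/\sigma_0$ and recalling that $E=(1+\gamma_0 Z)^{-1/\gamma_0}$ is unit exponential (the identity used already in the proof of Proposition~\ref{prop:cons}), the GEV model in a neighbourhood of $\theta_0$ can be pushed forward to a model in the single exponential variable $E$ for which the densities have an explicit closed form. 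The candidate score is the pointwise gradient $\nabla_\theta\ell_\theta(x)|_{\theta=\theta_0}$, set to zero off $S_{\theta_0}$; the expressions in Appendix~\ref{app:GEV} show that this gradient is, up to bounded smooth factors, a polynomial in $\log E$ and $E$, which lies in $L_2(P_{\theta_0})$ exactly when the relevant moments of $\log E$ and of $E$ exist — and here the constraint $\gamma_0>-1/2$ surfaces, because near the finite endpoint of $S_{\theta_0}$ (when $\gamma_0<0$) the factor $(1+\gamma_0 z)^{-1/\gamma_0-1}$ blows up, and its square is integrable against the GEV density precisely when $\gamma_0>-1/2$.

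The core analytic step is the passage from pointwise differentiability of $\theta\mapsto\sqrt{p_\theta(x)}$ to the $L_2(\mu)$ statement \eqref{eq:dqm}. For this I would invoke the standard sufficient condition (as in \citet[Lemma~7.6]{vdV98}): if for $\mu$-almost every $x$ the map $\theta\mapsto s_\theta(x):=\sqrt{p_\theta(x)}$ is continuously differentiable on a neighbourhood of $\theta_0$ with gradient $\dot s_\theta(x)$, and if the Fisher information $\theta\mapsto I_\theta=\int (\dot s_\theta)(\dot s_\theta)^T\,4\,d\mu$ is well-defined and continuous at $\theta_0$, then the model is differentiable in quadratic mean at $\theta_0$ with score $\dot\ell_{\theta_0}=2\dot s_{\theta_0}/s_{\theta_0}$. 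The subtlety is that the domain $\{x:p_\theta(x)>0\}$ moves with $\theta$, so $\theta\mapsto s_\theta(x)$ is \emph{not} smooth at those $x$ lying on the boundary curve $\sigma+\gamma(x-\mu)=0$ for some nearby $\theta$; one must check that this exceptional set is $P_{\theta_0}$-null (it is, being a one-dimensional curve) and, more importantly, that the truncation caused by the moving support contributes negligibly in $L_2$. Concretely, I would split the integral in \eqref{eq:dqm} into the region $\bar S(\norm h)$, where pointwise Taylor expansion plus dominated convergence applies, and its complement $\Xc\setminus\bar S(\norm h)$, whose $P_{\theta_0}$-measure is $o(\norm h^2)$ — this is exactly Condition~(b) of Proposition~\ref{prop:mlelip} — and on which $\sqrt{p_{\theta_0}}$ and the score term are individually small in $L_2$. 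A uniform integrable envelope for $\dot s_\theta/s_\theta$ over a neighbourhood of $\theta_0$, available when $\gamma_0>-1/2$, makes the dominated-convergence argument go through.

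For the converse — failure of differentiability in quadratic mean when $-1\le\gamma_0\le-1/2$ — I would argue that differentiability in quadratic mean forces the score to be square-integrable and the Fisher information to be finite, and then show directly that the $\gamma$-component (or the $\mu$-component) of the pointwise gradient fails to be in $L_2(P_{\theta_0})$ because of the non-integrable singularity at the finite upper endpoint: near $x\uparrow\mu_0-\sigma_0/\gamma_0$ one has $\partial_\mu\ell_\theta\asymp (1+\gamma_0 z)^{-1}$ times a bounded factor, whose square integrates the GEV density like $\int_0^\cdot e^{1/\gamma_0}\,e^{-\infty}\cdots$ — more carefully, substituting $E=(1+\gamma_0 z)^{-1/\gamma_0}\to\infty$, the offending term behaves like $E^{-\gamma_0}$ and $\int_1^\infty E^{-2\gamma_0}e^{-E}\,dE$ converges, so the real obstruction is more delicate and I would instead follow \citet{marohn:1994} in testing \eqref{eq:dqm} against a well-chosen direction $h$ and showing the left-hand side is of exact order $\norm h^2$ with a nonzero coefficient, contradicting the $o(\norm h^2)$ requirement; alternatively one shows the $L_2$-derivative, if it existed, would have to equal the pointwise gradient a.e., which is not in $L_2$. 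I expect this converse direction, and the careful bookkeeping of the singularity exponent at the endpoint, to be the main obstacle; the positive direction is essentially an application of \citet[Lemma~7.6]{vdV98} once the moving-support truncation is controlled via Condition~(b).
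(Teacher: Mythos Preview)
Your plan for the positive direction is in the right spirit but misses a threshold that the paper makes explicit. When $\gamma_0>-1/3$, Lemma~7.6 in \cite{vdV98} applies \emph{directly}: the paper shows (Lemma~\ref{lem:pdf:pow} with $a=1/2$) that $\theta\mapsto\sqrt{p_\theta(x)}$ is continuously differentiable on all of $\reals$, including across the moving boundary, because $\partial_\theta\sqrt{p_\theta(x)}\to 0$ as $x$ approaches the endpoint. No support-splitting is needed there. The delicate range is $-1/2<\gamma_0\le -1/3$: here $\partial_\theta\sqrt{p_\theta(x)}$ does \emph{not} vanish at the endpoint, so $\theta\mapsto s_\theta(x)$ fails to be $C^1$ and Lemma~7.6 does not apply. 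Your proposed fix---dominated convergence on $\bar S(\norm{h})$ with a uniform $L_2$ envelope for $\dot s_\theta/s_\theta$---is not obviously available in this range; you assert such an envelope exists but do not justify it, and near the endpoint the score blows up. The paper instead adapts the proof of Lemma~7.6 via Vitali-type convergence (Proposition~2.29 in \cite{vdV98}): it shows $\limsup_n\int f_n^2\le \frac14 h^TI_{\theta_0}h$ by writing $f_n=\int_0^1\dot s_{\theta_0+ut_nh_n}^T h_n\,\diff u$ on the interval where all intermediate supports cover $x$, applying Cauchy--Schwarz and continuity of $\theta\mapsto I_\theta$, and handling the leftover strip near the endpoints by the crude bound $f_n^2\le 2t_n^{-2}(p_{\theta_n}+p_{\theta_0})$ together with the estimate $P_{\tilde\theta_n}[\tilde\omega_n-u_n,\tilde\omega_n)=O(u_n^{1/|\gamma_0|})=o(u_n^2)$. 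This last estimate is similar to but not the same as Condition~(b) of Proposition~\ref{prop:mlelip}.

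For the converse you correctly diagnose that the score \emph{is} in $L_2(P_{\theta_0})$ for all $\gamma_0>-1$, so ``infinite Fisher information'' is not the obstruction. Your fallback (``test against a well-chosen direction $h$'') is right but vague; the paper's argument is short and concrete. Take $h=(0,t,0)^T$ and restrict the integral in \eqref{eq:dqm} to the set $\{p_{\theta_0}=0\}$: on that set both $\sqrt{p_{\theta_0}}$ and $\dot\ell_{\theta_0}\sqrt{p_{\theta_0}}$ vanish, so the integrand is simply $p_{\theta_0+h}$, and the integral equals $P_{\theta_0+h}\{p_{\theta_0}=0\}=1-\exp\{-(\abs{\gamma_0}t/\sigma_0)^{1/\abs{\gamma_0}}\}\sim (\abs{\gamma_0}t/\sigma_0)^{1/\abs{\gamma_0}}$. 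When $\gamma_0\le -1/2$ this is of order at least $t^2$, contradicting $o(t^2)$.
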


The proof of Proposition~\ref{prop:dqm} requires showing \eqref{eq:dqm}. For every real $x$ such that $\sigma_0 + \gamma_0 (x - \mu_0) \ne 0$, the order of the integrand in \eqref{eq:dqm} is $o( \norm{h}^2 )$ as $h \to \infty$ due to the differentiability of the map $\theta \mapsto \sqrt{p_\theta(x)}$ at $\theta = \theta_0$ for such $x$. 
This asymptotic relation is true pointwise in $x$, and it remains to be shown that it can be integrated over $x$.
The most delicate case occurs when $-1/2 < \gamma_0 \le -1/3$ because in that case, the first-order partial derivatives of the map $\theta \mapsto \sqrt{p_\theta}(x)$ are not continuous at the boundary situation $\sigma + \gamma(x - \mu) = 0$. A detailed proof is given in Appendix~\ref{app:dqm}.

\begin{proposition}[Asymptotic normality]
\label{prop:an}
Let $X_1, X_2, \ldots$ be independent and identically distributed random variables with common GEV law $G_{\theta_0}$, with $\theta_0 = (\gamma_0, \mu_0, \sigma_0) \in (-1/2, \infty) \times \reals \times (0, \infty)$. Then, for any compact parameter set $\cd \subset (-1/2, \infty) \times \R \times (0,\infty)$, any sequence of maximum likelihood estimators over $\cd$ is strongly consistent and asymptotically normal:
\[ 
  \sqrt{n} ( \hat{\theta}_n - \theta_0 ) 
  = I_{\theta_0}^{-1} \frac{1}{\sqrt{n}} \sum_{i=1}^n \dot{\ell}_{\theta_0}(X_i) + o_\Prob(1)
  \dto N_3(0, I_{\theta_0}^{-1}), \qquad n \to \infty.
\]
\end{proposition}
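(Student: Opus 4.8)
The plan is to verify the four conditions of Proposition~\ref{prop:mlelip} for the GEV family at $\theta_0 \in (-1/2, \infty) \times \R \times (0,\infty)$, following the usual three-step route for M-estimators. \emph{Step one, consistency.} Since the compact set $\cd$ satisfies $\cd \subset (-1/2, \infty) \times \R \times (0,\infty) \subset (-1,\infty) \times \R \times (0,\infty)$ and contains $\theta_0$ in its interior, Proposition~\ref{prop:cons} already yields that every sequence of maximizers of $\theta \mapsto \Probn \ell_\theta$ over $\cd$ converges to $\theta_0$ almost surely; this gives both the strong consistency claimed in the statement and the qualitative ingredient for the later steps. \emph{Step two, rate of convergence.} Condition~(d) of Proposition~\ref{prop:mlelip}, namely $\sqrt n (\hat\theta_n - \theta_0) = O_\Prob(1)$, will be obtained from Proposition~\ref{prop:rate}, which applies Corollary~5.53 in \cite{vdV98} to the modified, real-valued and bounded-below criterion $m_\theta$ of \eqref{eq:m}; replacing $\ell_\theta$ by $m_\theta$ removes the $-\infty$ values caused by the moving support, while the modulus-of-continuity estimate that Corollary~5.53 requires follows from differentiability in quadratic mean.

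\emph{Step three, the limiting distribution}, consists of checking conditions~(a)--(c) of Proposition~\ref{prop:mlelip}. Condition~(a) is immediate: differentiability in quadratic mean at $\theta_0$ with the stated score vector holds by Proposition~\ref{prop:dqm} because $\gamma_0 > -1/2$, and the Fisher information matrix $I_{\theta_0}$ is the explicit, positive-definite $3 \times 3$ matrix computed in \cite{prescott+w:1980} --- positive definiteness because the three partial derivatives $\partial_\gamma \ell_{\theta_0}$, $\partial_\mu \ell_{\theta_0}$, $\partial_\sigma \ell_{\theta_0}$ are linearly independent as elements of $L_2(P_{\theta_0})$, so that their Gram matrix is invertible. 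For condition~(b) one analyses the geometry of $S_\theta = \{ x : \sigma + \gamma(x-\mu) > 0 \}$: the set $\Xc \setminus \bar S(\eps)$ is contained in a one-sided neighbourhood of the relevant endpoint of $S_{\theta_0}$ of width $O(\eps)$ (the upper endpoint if $\gamma_0 < 0$, the lower endpoint if $\gamma_0 > 0$, two far tails at distance of order $1/\eps$ if $\gamma_0 = 0$), and one bounds the $G_{\theta_0}$-mass of such a neighbourhood. When $-1/2 < \gamma_0 < 0$ this mass is of order $\eps^{1/\abs{\gamma_0}}$, which is $o(\eps^2)$ precisely because $\abs{\gamma_0} < 1/2$; when $\gamma_0 > 0$ the mass near the finite lower endpoint decays like $\exp(-c\, \eps^{-1/\gamma_0})$, and when $\gamma_0 = 0$ the Gumbel tails are even smaller. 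Thus~(b) holds, and again $\gamma_0 > -1/2$ is exactly the threshold that makes it work.

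The substantial work lies in condition~(c), the uniform Lipschitz bound, which I expect to be the main obstacle. By the mean value theorem it suffices to produce $\dot\ell \in L_2(P_{\theta_0})$ and $\eps_0 > 0$ with
\[
  \sup_{\theta \in U_\eps(\theta_0)} \norm{ \dot\ell_\theta(x) } \le \dot\ell(x)
  \qquad \text{for all } x \in \bar S(2\eps) \text{ and } 0 < \eps < \eps_0 .
\]
The reason the restriction to $x \in \bar S(2\eps)$, with $\theta$ only ranging over $U_\eps(\theta_0)$, is usable is a quantitative margin: for such $x$ and $\theta$ the quantity $\sigma + \gamma(x-\mu)$ is bounded below by a constant multiple of $\sigma_0 + \gamma_0(x-\mu_0)$, i.e.\ of the distance from $x$ to the boundary of $S_{\theta_0}$, uniformly in small $\eps$. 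This keeps us away from the singularity of the score at the support boundary, so that the envelope $\dot\ell$ can be taken comparable to $\max\{1, \norm{\dot\ell_{\theta_0}}\}$, which lies in $L_2(P_{\theta_0})$ by Proposition~\ref{prop:dqm}. Turning this heuristic into a rigorous bound requires estimating each of the partial derivatives of the GEV log-density from Appendix~\ref{app:GEV} --- terms such as $(1 + \gamma z)^{-1/\gamma}$, $\gamma^{-2} \log(1+\gamma z)$ and $z/(1+\gamma z)$ --- uniformly over $\theta$ in a small neighbourhood of $\theta_0$ and over $x \in \bar S(2\eps)$, distinguishing the regimes $1 + \gamma z \to 0$, $z \to \infty$, and $\gamma \to 0$ (where $\gamma^{-1}\log(1+\gamma z)$ has a removable singularity). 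This is genuinely tedious and is carried out in Appendix~\ref{app:an}. Once~(a)--(d) are in place, Proposition~\ref{prop:mlelip} delivers the asymptotic expansion and the limit law $N_3(0, I_{\theta_0}^{-1})$, completing the proof.
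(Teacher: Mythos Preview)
Your proposal is correct and follows essentially the same route as the paper: verify conditions (a)--(d) of Proposition~\ref{prop:mlelip}, with consistency from Proposition~\ref{prop:cons}, differentiability in quadratic mean from Proposition~\ref{prop:dqm}, the support estimate~(b) from the endpoint analysis in Lemma~\ref{lem:suppgev}, the Lipschitz condition~(c) from Lemma~\ref{lem:lipgev}, and the rate~(d) from Proposition~\ref{prop:rate}. One small inaccuracy worth flagging: the ingredient that Corollary~5.53 of \cite{vdV98} needs for the rate is not a modulus-of-continuity estimate coming from differentiability in quadratic mean, but a separate Lipschitz bound on the modified criterion $m_\theta$ (Lemma~\ref{lem:Lipschitz}) together with a second-order expansion of $\theta \mapsto P_{\theta_0} m_\theta$ (Lemma~\ref{lem:second_order_expansion}); since you defer to Proposition~\ref{prop:rate} anyway, this does not affect the validity of your outline.
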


\begin{proof}
Strong consistency follows from Proposition~\ref{prop:cons}. The proof of Proposition~\ref{prop:an} therefore consists of checking the four conditions (a)--(d) of Proposition~\ref{prop:mlelip}. Property (a) is just Proposition~\ref{prop:dqm}. The rate of convergence~(d) is the topic of Proposition~\ref{prop:rate}, the proof of which consists of an application of Corollary~5.53 in \cite{vdV98} to the modified criterion function $m_\theta$ in \eqref{eq:m}; a Lipschitz property of the latter function is established in Lemma~\ref{lem:Lipschitz}. The remaining points (b) and (c) are treated in Appendix~\ref{app:an}:  the condition (b) on the support is given in Lemma~\ref{lem:suppgev}, while the Lipschitz condition (c) is given in Lemma~\ref{lem:lipgev}.
\end{proof}

\begin{remark}
If $\gamma_0 = -1/2$, then $(d/dx) p_\theta(x)$ converges to a negative constant as $x$ increases to the upper endpoint of the support, and the results in \cite{woodroofe:1972}
suggest that the maximum likelihood estimator is still asymptotically Normal, but at a rate faster than $n^{-1/2}$; see also \citet[Theorem~3(ii)]{smith:1985}. For $-1 < \gamma_0 < -1/2$, the results in \citet{woodroofe:1974} suggest that the maximum likelihood estimator converges weakly to a certain non-Gaussian limit and at a rate depending on $\gamma_0$; see also \citet[Theorem~3(iii)]{smith:1985}. Since at $\gamma_0 \le -1/2$, the GEV family is not differentiable in quadratic mean, our Proposition~\ref{prop:mlelip} does not apply, and we do not pursue the matter further.
\end{remark}

% =======================================================
\section{Maximum likelihood estimators for other parametric families} \label{sec:of}
% -------------------------------------------------------

The result of Proposition~\ref{prop:mlelip} is general enough to be applicable for a wide variety of parametric families with support depending on the parameter. Deriving all the details may be quite cumbersome and is beyond the scope of the present paper but, nevertheless, we would like to review a couple of parametric families on the real line where Proposition~\ref{prop:mlelip} may come in handy. 

The Generalized Pareto distribution with parameter $\theta=(\gamma,\sigma) \in \R \times (0,\infty)$ is defined by the cdf
\[
G_{\theta}(x) = 
  \begin{cases}
    1-e^{-x/\sigma}
      & \text{if $\gamma = 0$ and $x>0$}, \\
    1 -(1+\gamma x/\sigma)^{-1/\gamma}
      & \text{if $\gamma \ne 0$ and $x\in S_\theta$}, \\
     0 & \text{if  $x\le0$}, \\
          1 & \text{if  $\gamma < 0$ and $x\ge -\sigma/\gamma$,}
  \end{cases}
\]
where $S_\theta=(0,\infty)$ for $\gamma\ge0$ and $S_\theta=(0,-\sigma/\gamma)$ for $\gamma<0$. Using the notation in \eqref{eq:ugammaz} below, its density can be written as
\[
p_\theta(x) = \sigma^{-1} u_\gamma(x/\sigma)^{1+\gamma} \ind(x > 0, \, \sigma + \gamma x > 0),  \qquad x \in \R,
\]
a function which is similar to but simpler than the one of the GEV family. Along similar lines as for that family, the results from Appendix~\ref{app:GEV} can  be used to derive differentiability in quadratic mean at any point $(\gamma_0,\sigma_0)\in(-1/2, \infty) \times (0,\infty)$ (see also \citealp{marohn:1994}, for the case $\gamma_0=0$). Also, the Lipschitz condition on $\ell_\theta$ (Condition~(c) of Proposition~\ref{prop:mlelip}) and on $m_\theta$ (needed to prove Condition~(d) of Proposition~\ref{prop:mlelip} via an application of Corollary 5.53 in \citealp{vdV98}; see also the proof of Proposition~\ref{prop:rate} below) can be checked similarly to how we proceeded for the GEV family. Asymptotic normality of the MLE based on high-threshold exceedances was proved with different techniques in \cite{drees+f+dh:2004}. 

Following \cite{smith:1985}, we may also consider parametric families of densities on the real line of the form
\[
  p_{\theta}(x) = (x-\mu)^{\alpha(\phi)-1} \, g(x-\mu; \phi),
  \qquad x \in (\mu, \infty),
\]
where $\theta=(\mu,\phi)\in \Theta= \R \times \Phi \subset \R^{1+q}$ are parameters and where $g:(0,\infty) \times \Phi \to [0,\infty)$ and  $\alpha:\Phi \to (0,\infty)$ are known, smooth functions.  Among other families (see \citealp{smith:1985} for the three-parameter gamma, beta and log-gamma distributions), we obtain for instance the three-parameter Weibull family by choosing
\[
\Phi=(0,\infty)^2, \quad 
g(x;\phi_1, \phi_2)=\phi_1 \phi_2\exp(-\phi_2x^{\phi_1}), \quad
\alpha(\phi_1, \phi_2) = \phi_1.
\]
Under regularity conditions on $\alpha$ and $g$, Proposition~\ref{prop:mlelip} can be applied on the restricted parameter set $\Theta_2=\R \times \Phi_2 = \R \times \{ \phi: \alpha(\phi)>2\}$; this corresponds to case (i) of Theorem~3 of \cite{smith:1985}.

Consider Condition~(b) of Proposition~\ref{prop:mlelip}. In \cite{smith:1985}, it is assumed that 
\[
c(\phi) = \lim_{x \downarrow 0} g(x; \phi) / \alpha(\phi) \in (0,\infty)
\] 
exists (note that $c(\phi)=\phi_2>0$ for the Weibull family), which implies that $g(x; \phi_0)$ may be bounded by a  constant $K_0$ for sufficiently small $x$. As a consequence, Condition~(b) in Proposition~\ref{prop:mlelip} is met: clearly, $S_\theta=(\mu,\infty)$, so that $\bar S(\eps)=(\mu_0+\eps, \infty)$, and thus, since $\alpha_0=\alpha(\phi_0)>2$,
\begin{align*}
P_{\theta_0} \big( \R \setminus \bar S(\eps) \big) 
&= 
\int_{\mu_0}^{\mu_0+\eps} (x-\mu_0)^{\alpha_0-1} g(x-\mu_0; \phi_0) \, \diff x \\
&\le
K_0 \int_0^\eps y^{\alpha_0-1} \, \diff y = O(\eps^{\alpha_0}) = o(\eps^2), \qquad \eps \downarrow 0.
\end{align*}

Let us sketch how to arrive at the other conditions of  Proposition~\ref{prop:mlelip}. Since
\[
\ell_\theta(x) = \{ \alpha(\phi)-1\} \log(x-\mu) + \log g(x-\mu; \phi),
\]
Condition~(c) of Proposition~\ref{prop:mlelip} can  obviously be deduced from smoothness properties of the map $\phi \mapsto \alpha(\phi)$ on $\Phi_2$ and of the map $(x,\phi)\mapsto h(x,\phi)=\log g(x;\phi)$ on $(0,\infty) \times \Phi_2$. Note that 
\[
|\partial_\mu \ell_\theta(x)|  \le \left| \frac{\alpha(\phi) - 1}{x-\mu} \right| + | \partial_x h(x-\mu; \phi)|
\]
and 
\[
|\partial_\phi \ell_\theta(x)|  \le | \partial_\phi \alpha(\phi) \log(x-\mu)|  + | \partial_\phi h(x-\mu; \phi)|,
\]
showing that the inequality $\alpha(\phi)>2$ must be used again to control the integrals of the squared score functions near the lower endpoint. 
Condition~(a) of Proposition~\ref{prop:mlelip} can be conveniently shown by using Lemma~7.6 in \cite{vdV98}: if the Fisher information $I_\theta=P_\theta \dot \ell_\theta \dot \ell_\theta^T$ exists and is continuous in $\theta$, we only need to show that, for each $x\in\R$, the function $\theta\mapsto s_\theta(x) = \sqrt{p_\theta(x)}$ on $\Theta_2$ is continuously differentiable. Using the relations $\partial_\mu s_\theta(x)=\partial_\phi s_\theta(x)=0$ for $x<\mu$ and the identities
\[
\partial_\mu s_\theta(x) = \frac{1}{2} \, s_\theta(x) \, \partial_\mu \ell_{\theta}(x), \qquad 
\partial_\phi s_\theta(x) = \frac{1}{2} \, s_\theta(x) \, \partial_\phi \ell_{\theta}(x),
\]
for $x>\mu$, this follows from continuous differentiability of $\alpha$ and $h$, provided that $\alpha(\phi)>3$. For $2 < \alpha(\phi) \le 3$, $\partial_\mu s_\theta(x)$ will not converge to 0 as $x \downarrow \mu$; this is similar to the case $-1/2 < \gamma \le -1/3$ for the GEV family and may require more sophisticated arguments (see the proof of Proposition~\ref{prop:dqm}). 
Finally, Condition~(d) of Proposition~\ref{prop:mlelip} may be deduced from Corollary 5.53 in \cite{vdV98}. By following the arguments that lead to \eqref{eq:bound} in Lemma~\ref{lem:Lipschitz} (with $a=1/2$), the necessary Lipschitz condition on $m_\theta$ can again be conveniently reformulated in terms of $\alpha$ and $h$.

\appendix

% ==============================================
\section{Proof of Proposition~\ref{prop:mlelip}}
% ----------------------------------------------
\label{app:mlelip}

For $0<\eps<\eps_0$ and  $\theta \in U_{\eps}(\theta_0)$, define a real-valued function $\ell_{\theta, \eps}$ on $S_{\theta_0}$ by 
\[
  \ell_{\theta, \eps} (x) = \ell_\theta(x) \, \ind \{  x \in \bar S(2\eps) \}.
\]
The Lipschitz condition \eqref{eq:lip} on $\ell$ implies that
\begin{align} \label{eq:lipm}
  \abs{ \ell_{\theta_1, \eps}(x) - \ell_{\theta_2, \eps}(x) } 
  \le
  \dot{\ell}(x) \, \norm{\theta_1 - \theta_2}
\end{align}
for all $\theta_1$ and $\theta_2$ in $U_{\eps}(\theta_0)$ and for all $x\in S_{\theta_0}$.

We claim that, for any $C >0$, we have
\begin{align}
\label{eq:max}
  \Probn  \ell_{\hat \theta_n}
= \sup_{ \theta \in \Theta} \Probn \ell_\theta
\ge \sup_{ \norm{ \theta- \theta_0 } < C / \sqrt{n} } \Probn \ell_\theta 
  \ge \sup_{ \norm{ \theta- \theta_0 } < C / \sqrt{n} } \Probn \ell_{\theta, C / \sqrt{n}} - o_\Prob(1/n).
\end{align}
Only the last inequality is non-trivial. Write, for an arbitrary  $\theta$ with $\norm{ \theta- \theta_0 } < C / \sqrt{n}$,
\begin{align*}
\Probn \ell_\theta 
&= 
\Probn \ell_{\theta,C / \sqrt{n}} + \Probn \ell_\theta \ind \{ \point \in S_{\theta_0} \setminus \bar S(2C/\sqrt n) \}  \\
&\ge
\Probn \ell_{\theta,C / \sqrt{n}}
- \sup_{\norm{\theta' - \theta_0} < C / \sqrt{n} } \abs{ \Probn \ell_{\theta'} \ind \{ \point \in S_{\theta_0} \setminus \bar S(2C/\sqrt n) \}  }.
\end{align*}
The supremum on the second line is $o_\Prob(r_n)$ for any sequence $r_n\downarrow 0$: for all $\eta > 0$,
\begin{multline}
\label{eq:Szero}
  \Prob	\left[ 
    r_n^{-1} \sup_{\norm{\theta' - \theta_0} < C / \sqrt{n} }
      \abs{ \Probn \ell_{\theta'} \ind \{ \point \in S_{\theta_0} \setminus \bar S(2C/\sqrt n) \} } 
    > \eta 
  \right] 
  \\
  \le 
  \Prob\{ \exists\, i = 1, \ldots, n : X_i \in  \Xc \setminus \bar S(2C/\sqrt n) \} \\
  \le 
  n P_{\theta_0} \bigl( \Xc \setminus \bar S(2C/\sqrt n) \bigr)
  = o(1), \qquad n \to \infty,
\end{multline}
by Assumption~\eqref{eq:suppeps}. Equation~\eqref{eq:max} has thus been proved.

Now, let us show that, for fixed $C>0$ and all converging sequences $h_n\to h \in \R^k$ with $\norm{ h_n } < C$ for all $n$, we have
\begin{equation}
\label{eq:mtheta}
  \sqrt n (\ell_{\theta_0+h_n/\sqrt n, C/\sqrt n} - \ell_{\theta_0, C/\sqrt n}) 
  \to h^T \dot \ell_{\theta_0} 
  \qquad \text{ in } L_2(P_{\theta_0}).
\end{equation}
For that purpose, write the left-hand side as
\begin{align*}
  A_{n1} - A_{n2}  
  = 
  \sqrt n (\ell_{\theta_0+h_n/\sqrt n} - \ell_{\theta_0}) 
  -
  \sqrt n (\ell_{\theta_0+h_n/\sqrt n} - \ell_{\theta_0}) 
  \ind\{ \point \in S_{\theta_0} \setminus \bar S(2C/\sqrt n) \}.
\end{align*}
Because of differentiability in quadratic mean, Assumption~(a), the term $A_{n1}$ converges to $h^T \dot \ell_{\theta_0}$ in $P_{\theta_0}$-probability; see the proof of Theorem~5.39 in \cite{vdV98}. Moreover, $A_{n2}$ converges to zero in $P_{\theta_0}$-probability: for all $\eta > 0$, we have
\[
  P_{\theta_0}( \abs{ A_{n2} } > \eta ) 
  \le P_{\theta_0} \{ S_{\theta_0} \setminus \bar S(2C/\sqrt n) \} 
  = o(1), 
  \qquad n \to \infty.
\]
Hence, the convergence in  \eqref{eq:mtheta} holds in $P_{\theta_0}$-probability. In view of the Lipschitz-property of $\ell_{\theta, C/\sqrt{n}}$ in~\eqref{eq:lipm} and the fact that $P_{\theta_0} \dot{\ell}^2 < \infty$ by Assumption~(c), convergence in $P_{\theta_0}$-probability can be strengthened to $L_2(P_{\theta_0})$ con\-ver\-gence by applying the dominated convergence theorem. Indeed, for any subsequence of the right hand-side of \eqref{eq:mtheta}, we may choose a sub-subsequence along which the convergence holds almost surely. The dominated convergence theorem implies convergence in $L_2(P_{\theta_0})$ along that sub-subsequence. The claim follows since the subsequence we have started with was arbitrary.

Next, we shall show that, for any converging sequence $h_n\to h$ with $\norm{ h_n } < C$ for~all~$n$,
\begin{equation} 
  \label{eq:2ndd}
  n \, P_{\theta_0} \bigl( \ell_{\theta_0+h_n/\sqrt n, C/\sqrt n} -  \ell_{\theta_0,C/\sqrt n} \bigr) 
  \to -\frac{1}{2} h^T I_{\theta_0} h, 
  \qquad n \to \infty.
\end{equation}
Recall the empirical process $\Gb_n = \sqrt{n} ( \Probn - P_{\theta_0} )$. In view of \eqref{eq:mtheta}, computing means and variances, we have
\begin{align}
  \label{eq:bn}
  B_n(h_n) 
  = 
  \Gb_n\{ \sqrt n ( \ell_{\theta_0 + h_n/\sqrt n, C/\sqrt n} - \ell_{\theta_0, C/\sqrt n} ) - h^T \dot \ell_{\theta_0} \} 
  = o_\Prob(1),
  \qquad n \to \infty.
\end{align}
We can rewrite $B_n$ as 
\begin{align*}
B_n(h_n) 
&= 
n \Probn  ( \ell_{\theta_0 + h_n/\sqrt n, C/\sqrt n} - \ell_{\theta_0, C/\sqrt n} )  - \Gb_n h^T \dot \ell_{\theta_0} \\ 
&\hspace{1.5cm} 
- nP_{\theta_0} ( \ell_{\theta_0 + h_n/\sqrt n, C/\sqrt n} - \ell_{\theta_0, C/\sqrt n} ) \\
&=
n \Probn  ( \ell_{\theta_0 + h_n/\sqrt n} - \ell_{\theta_0} )  - \Gb_n h^T \dot \ell_{\theta_0} \\ 
&\hspace{1.5cm} 
- n\Probn ( \ell_{\theta_0 + h_n/\sqrt n} - \ell_{\theta_0} ) \ind \{ \point \in \Xc \setminus \bar S(2C/\sqrt n) \} \\
&\hspace{3cm} 
- nP_{\theta_0} ( \ell_{\theta_0 + h_n/\sqrt n, C/\sqrt n} - \ell_{\theta_0, C/\sqrt n} ).
\end{align*}
Note that $n \Probn( \ell_{\theta_0 + h_n/\sqrt{n}} - \ell_{\theta_0} ) = \sum_{i=1}^n \log (p_{\theta_0 + h_n/\sqrt{n}}/p_{\theta_0})(X_i)$ is a likelihood ratio statistic. Differentiability in quadratic mean, Assumption (a), implies
\[
  n\Probn  ( \ell_{\theta_0 + h_n/\sqrt n} - \ell_{\theta_0} )  - \Gb_n h^T \dot \ell_{\theta_0}  
  = -\frac{1}{2} h^T I_{\theta_0} h + o_\Prob(1), 
  \qquad n \to \infty;
\]
see \citet[Theorem~7.2]{vdV98}. Moreover, by Assumption~(b),
\begin{multline*}
  \Prob \left[ 
    \abs{ 
      n \, \Probn ( \ell_{\theta_0 + h_n/\sqrt n} - \ell_{\theta_0} ) \, 
      \ind \{ \point \in \Xc \setminus \bar S(2C/\sqrt n) \} 
    } 
    \ge \eta
  \right] \\
  \le 
  n \, P_{\theta_0} \bigl( \Xc \setminus \bar S(2C/\sqrt n) \bigr)
  = o(1), \qquad n \to \infty.
\end{multline*}
The last four displays imply \eqref{eq:2ndd}.

We can now follow the lines of the proof of Theorem~5.23 in \cite{vdV98} to prove the following reinforcement of \eqref{eq:bn}: for any random sequence $\tilde h_n$ such that $\norm{ \tilde h_n } < C$ almost surely for all $n$, we have
\begin{align}
\label{eq:bnt}
  B_n(\tilde h_n) = o_\Prob(1), \qquad n \to \infty.
\end{align}
To see this, note that it follows from  \eqref{eq:bn} that $B_n(h) = o_\Prob(1)$ for any fixed $h \in \reals^k$ with $\norm{h} < C$. As a consequence, the finite-dimensional distributions of the stochastic process $B_n$ converge indeed to $0$ in probability. It remains to show asymptotic tightness of the sequence $B_n$ in the space $\ell^\infty( \{ h : \norm{h} < C \} )$ equipped with the supremum distance; note that by the Lipschitz property~\eqref{eq:lipm}, the trajectories of $B_n$ are indeed bounded almost surely. Obviously, the sequence of linear processes $h \mapsto \Gb_n h^T \dot{\ell}_{\theta_0} = h^T \, \Gb_n \dot{\ell}_{\theta_0}$ is asymptotically tight, so that it suffices to show the same property for the processes 
\[
  h \mapsto M_n(h) 
  = 
  \sqrt{n} \, \Gb_n ( \ell_{\theta_0 + h/\sqrt n, C / \sqrt{n}} - \ell_{\theta_0, C / \sqrt{n}} ).
\]
This can be done along the lines of the proof of Lemma~19.31 in \cite{vdV98}, relying on a result for empirical processes indexed by sequences of function classes. More precisely, define a sequence of function classes on $S_{\theta_0}$ through
\[
  \Mcc_n = \{  \sqrt n( \ell_{\theta_0 + h/\sqrt n, C / \sqrt{n}} - \ell_{\theta_0, C / \sqrt{n}} ) : \norm{ h } <  C \}.
\]
By the Lipschitz property \eqref{eq:lipm}, the functions in $\Mcc_n$ are bounded by the envelope function $C \, \dot{\ell} \in L_2(P_{\theta_0})$. From Example~19.7 in \cite{vdV98}, we obtain the following bound on the bracketing number $N_{[\,]} \bigl( \eps , \Mcc_n, L_2(P_{\theta_0}))$: for some constant $K > 0$ not depending on $n$, we have, for all sufficiently small $\eps > 0$,
\[
  N_{[\,]} \bigl( \eps , \Mcc_n, L_2(P_{\theta_0}) \bigr) 
  \le 
  K \left( \eps^{-1} (2C^2) \, \norm{ \dot{\ell} }_{L_2(P_{\theta_0})} \right)^k,
\]
where $k$ denotes the dimension of the Euclidean space of which $\Theta$ is a subset; here we used the property that the diameter of the ball $\{ h \in \reals^k : \norm{h} < C \}$ is equal to $2C$. As a consequence, if $\delta_n \downarrow 0$, the bracketing integrals converge to zero:
\[
  J_{[\,]} \bigl( \delta_n, \Mcc_n, L_2(P_{\theta_0}) \bigr) 
  = 
  \int_0^{\delta_n} \sqrt {\log N_{[\,]} \bigl( \eps , \Mcc_n, L_2(P_{\theta_0}) \bigr)} \, d\eps \to 0,
  \qquad n \to \infty.
\]
The Lindeberg condition $P_{\theta_0} (C \dot\ell)^2 \ind \{ C \, \dot{\ell} > \eps \sqrt{n} \} \to 0$ ($n \to \infty$) is satisfied because the envelope function $C \, \dot \ell$ belongs to $L_2(P_{\theta_0})$. Asymptotic tightness of $B_n$ then follows from Theorem~19.28 in \cite{vdV98}. Equation~\eqref{eq:bnt} is thus proven.

To complete the proof of Proposition~\ref{prop:mlelip}, we can now proceed similarly to the proof of Theorem~5.23 in \cite{vdV98}, with some additional effort needed to get rid of the constant $C$. In view of \eqref{eq:2ndd}, the convergence property \eqref{eq:bnt} can be rewritten as
\begin{equation}
\label{eq:bnt:bis}
  n \, \Probn \bigl( \ell_{\theta_0 + \tilde h_n/\sqrt n, C/\sqrt{n}} - \ell_{\theta_0, C/\sqrt{n}} \bigr) 
  = 
  - \frac{1}{2} \tilde h_n^T I_{\theta_0}  \tilde h_n 
  +
  \tilde h_n^T \, \Gb_n \dot \ell_{\theta_0} 
  + 
  o_\Prob(1), \qquad n \to \infty,
\end{equation}
where $\tilde h_n$ denotes an arbitrary random sequence in $\reals^k$ with $\norm{ \tilde h_n } < C$.

Define $\hat h_{n1} = \sqrt n(\hat \theta_n - \theta_0)$ and $\hat h_{n2} = I_{\theta_0}^{-1} \Gb_n \dot \ell_{\theta_0}$. For $C>0$, let $A_{n,C}$ denote the event $\{ \max( \norm{ \hat h_{n1} }, \norm{ \hat h_{n2} }) < C \}$, and set $\tilde h_{nj} = \hat h_{nj} \ind(A_{n,C})$, for $j \in \{1,2\}$. Inserting both tilde-expressions into \eqref{eq:bnt:bis}, we get, as $n \to \infty$,
\begin{align*}
  n \, \Probn (\ell_{\theta_0+\tilde h_{n1}/\sqrt n, C/\sqrt{n}} - \ell_{\theta_0, C/\sqrt{n}}) 
  &= 
  - \frac{1}{2} \tilde h_{n1}^T I_{\theta_0}  \tilde h_{n1} 
  + \tilde h_{n1}^T\Gb_n \dot \ell_{\theta_0} 
  + o_\Prob(1), \\
  n \, \Probn (\ell_{\theta_0+\tilde h_{n2}/\sqrt n, C/\sqrt{n}} - \ell_{\theta_0, C/\sqrt{n}}) 
  &= 
  \frac{1}{2} \, \Gb_n \dot \ell^T_{\theta_0} \, I_{\theta_0}^{-1} \, \Gb_n \dot \ell_{\theta_0} \, \ind( A_{n,C} ) 
  + o_\Prob(1).
\end{align*}
Subtracting the second equation from the first one yields
\begin{align*}
% \nonumber
  &n \, \Probn (\ell_{\theta_0+\tilde h_{n1}/\sqrt n, C/\sqrt{n}} - \ell_{\theta_0+\tilde h_{n2}/\sqrt n, C/\sqrt{n}} )  
  = 
  - Q_n \ind(A_{n,C}) + o_\Prob(1), \qquad n \to \infty, \\
% \label{eq:quadratic_form}
  &\text{where} \quad
  Q_n
  =
    \frac{1}{2} 
%   \left\{ 
    \bigl( \hat h_{n1} -  I_{\theta_0}^{-1} \, \Gb_n \dot \ell_{\theta_0} \bigr)^T \,
    I_{\theta_0} \,
    \bigl( \hat h_{n1} -  I_{\theta_0}^{-1} \, \Gb_n \dot \ell_{\theta_0} \bigr).
%   \right\} 
\end{align*}

We will show that
\begin{equation}
\label{eq:quadratic_form_to_0}
  Q_n = o_\Prob(1), \qquad n \to \infty.
\end{equation}
Since the Fisher information matrix $I_{\theta_0}$ is positive definite, this will imply that
\[
  \sqrt{n}( \hat{\theta}_n - \theta_0) - I_{\theta_0}^{-1} \frac{1}{\sqrt{n}} \sum_{i=1}^n \dot{\ell}_{\theta_0}(X_i)
  =
  \hat{h}_{n1} - I_{\theta_0}^{-1} \Gb_n \dot{\ell}_{\theta_0} 
  = o_\Prob(1), \qquad n \to \infty,
\]
which is the first part of \eqref{eq:mlelip:an}. The asymptotic normality then follows from the multivariate central limit theorem.

It remains to show \eqref{eq:quadratic_form_to_0}. Note that $Q_n \ge 0$, since $I_{\theta_0}$ is positive definite. By the maximization property \eqref{eq:max}, we have, as $n \to \infty$,
\begin{align*}
  R_n
  =
  n \, \Probn \ell_{\hat \theta_n} 
  &\ge 
  n \, \Probn \ell_{\theta_0+\tilde h_{n2}/\sqrt n, C/\sqrt n} - o_\Prob(1) \\
  &=
  n \, \Probn \ell_{\theta_0+\tilde h_{n1}/\sqrt n, C/\sqrt n} + Q_n \ind(A_{n,C}) - o_\Prob(1).
\end{align*}
Isolating $Q_n \ind(A_{n,C})$, we find
\begin{align*}
  0 \le Q_n 
  &=
  Q_n \ind(A_{n,C}) + Q_n \ind(A_{n,C}^c) \\
  &\le 
  R_n - n \, \Probn \ell_{\theta_0+\tilde h_{n1}/\sqrt n, C/\sqrt n} + Q_n \ind(A_{n,C}^c) + o_\Prob(1),
  \qquad n \to \infty.
\end{align*}
Note that $\ell_{\hat \theta_n}(x) \, \ind \{ x\in \bar S(2C/\sqrt n) \} \, \ind(A_{n,C}) = \ell_{\theta_0+\tilde h_{n1}/\sqrt n, C/\sqrt{n}}(x) \, \ind(A_{n,C})$. Therefore,
\begin{align*}
  R_n 
  &= 
  R_n \ind( A_{n,C} ) + R_n \ind( A_{n,C}^c ) \\
  &=
    \bigl[ 
	n \, \Probn  \ell_{\hat \theta_n} \ind \{  \point \in \bar S(2C/\sqrt n) \} 
	+ 
	n \, \Probn  \ell_{\hat \theta_n} \ind \{  \point \notin \bar S(2C/\sqrt n) \} 
    \bigr] \, \ind( A_{n,C} ) 
    + 
    R_n \ind(A_{n,C}^c) \\
  &= 
  n \, \Probn \ell_{\theta_0+ \tilde h_{n1} /\sqrt n, C/\sqrt n} \ind( A_{n,C}) 
  + 
  o_\Prob(1) 
  + 
  R_n \ind(A_{n,C}^c), \qquad n \to \infty;
\end{align*}
the $o_\Prob(1)$ term appears because of the argument in \eqref{eq:Szero}. Substituting the expansion for $R_n$ into the upper bound for $Q_n$ yields, as $n \to \infty$,
\begin{align*}
  0 \le Q_n 
  &\le
  n \, \Probn \ell_{\theta_0+ \tilde h_{n1} /\sqrt n, C/\sqrt n} \ind( A_{n,C}) 
  + 
  R_n \ind(A_{n,C}^c) 
  \\
  &\qquad \mbox{}
  - n \, \Probn \ell_{\theta_0+\tilde h_{n1}/\sqrt n, C/\sqrt n} + Q_n \ind(A_{n,C}^c) + o_\Prob(1) \\
  &=
  \{ R_n + Q_n - n \, \Probn \ell_{\theta_0 + \tilde h_{n1}/\sqrt n, C/\sqrt{n} }  \} \ind(A_{n,C}^c)
  + o_\Prob(1).
\end{align*}
Hence, for any $\eps>0$,
\[
  \limsup_{n\to\infty}\Prob( \abs{Q_n} > \eps  )
  \le 
  \limsup_{n\to\infty} \Prob \left( A_{n,C}^c  \right),
\]
which can be made arbitrary small by increasing $C$, using Assumption~(d). This finishes the proof of \eqref{eq:quadratic_form_to_0} and thus of Proposition~\ref{prop:mlelip}.
\qed

% =====================================================
\section{Density and score functions of the GEV family}
% -----------------------------------------------------
\label{app:GEV}

The density, log-density, score functions and Fisher information matrix of the GEV family can of course be found in many articles and textbooks. Here we present some of these objects in a form which is convenient for later analysis. In the notation of Section~\ref{sec:mle}, the state space $(\mathcal{X}, \mathcal{A}, \mu)$ is the real line equipped with its Borel sets and the Lebesgue measure. The probability density function of the GEV distribution $P_\theta$ with parameter $\theta = (\gamma, \mu, \sigma) \in \R \times \R \times (0, \infty)$ is given by
\[
  p_\theta(x) 
  = \sigma^{-1} \, e^{-u} u^{\gamma + 1} \, \ind( 1 + \gamma z > 0 ), 
  \qquad x \in \R,
\]
where
\begin{align}
\nonumber
  z &= z_{\mu,\sigma}(x)= \frac{x - \mu}{\sigma}, \\
\label{eq:ugammaz}
  u &=u_\gamma(z) = \exp \left( - \int_0^z \frac{1}{1+\gamma t} \, dt \right) =
  \begin{cases} 
    (1 + \gamma z)^{-1/\gamma} & \text{if $\gamma \ne 0$,} \\ 
    e^{-z} & \text{if $\gamma = 0$.} 
  \end{cases}
\end{align}
The expression $u_\gamma(z)$ is convex and decreasing in $z$ and is increasing in $\gamma$. See Figure~\ref{fig:ugammaz} for graphs of the functions $z \mapsto u_\gamma(z)$ for $\gamma \in \{-0.5, 0, 0.5\}$.

\begin{figure}
\begin{center}
\includegraphics[width=0.4\textwidth]{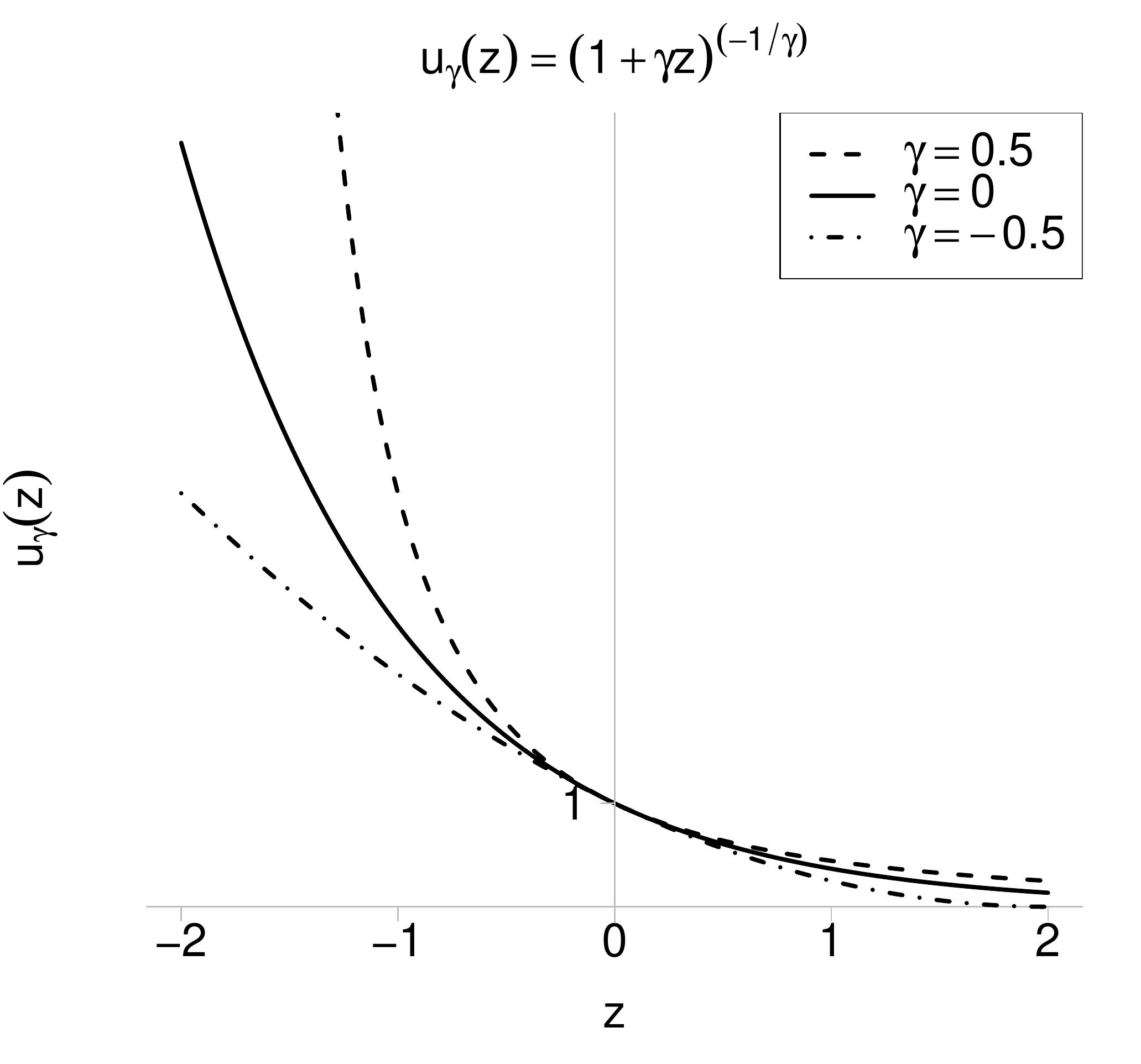}
\end{center}
\caption{\label{fig:ugammaz} Graph of $z \mapsto u_\gamma(z)$ in \eqref{eq:ugammaz} for $\gamma \in \{-0.5, 0, 0.5\}$ and $z \in [-2, 2]$.}
\end{figure}

The support, $S_\theta$, of the distribution is (defined as) the open interval
\[
  S_\theta=\{ x : p_\theta(x) > 0 \} = \{ x \in \R : \sigma + \gamma(x - \mu) > 0 \} =
  \begin{cases}
    (-\infty, \mu-\sigma/\gamma) & \text{if $\gamma < 0$,} \\
    \R & \text{if $\gamma = 0$,} \\
    (\mu-\sigma/\gamma, \infty) & \text{if $\gamma > 0$.}
  \end{cases}
\]
The log-density is given by
\[
  \ell_\theta(x) = - \log(\sigma) - u + (\gamma + 1) \log(u), \qquad x \in S_\theta,
\]
with $\log(u) = - \int_0^z (1 + \gamma t)^{-1} \, dt$. The partial derivatives of the map $\theta \mapsto \ell_\theta(x)$ at $\theta$ such that $p_\theta(x) > 0$ are given by
\begin{align}
\label{eq:scoregamma}
  \partial_\gamma \ell_\theta(x) &= (1 - u) \, \partial_\gamma \log(u) - \frac{z}{1 + \gamma z}, \\
\label{eq:scoremu}
  \partial_\mu \ell_\theta(x) &= \frac{\gamma + 1 - u}{\sigma(1 + \gamma z)}, \\
\label{eq:scoresigma}
  \partial_\sigma \ell_\theta(x) &= \frac{(1-u) z - 1}{\sigma (1 + \gamma z)},
\end{align}
with $\partial_\gamma \log(u)$ given by
\begin{align}
\label{eq:dlogu}
  0 \le \partial_\gamma \log(u) 
  &= \int_0^z \frac{t}{(1 + \gamma t)^2} \, dt 
  =
  \begin{cases}
    \dfrac{1}{\gamma} \left( \dfrac{1}{\gamma} \log(1 + \gamma z) - \dfrac{z}{1+\gamma z} \right) 
      & \text{if $\gamma \ne 0$,} \\[1em]
    \dfrac{z^2}{2} 
      & \text{if $\gamma = 0$.}
  \end{cases}
\end{align}
For $\gamma > - 1/2$, these partial derivatives have expectation zero and finite second moments with respect to $P_\theta$. For such $\theta$, the Fisher information matrix $I_\theta \in \R^{3 \times 3}$ is equal to the covariance matrix of the score vector 
\[ 
  \dot{\ell}_\theta 
  = ( \partial_\gamma \ell_\theta, \partial_\mu \ell_\theta, \partial_\sigma \ell_\theta )^T,
\]
the three entries of which are viewed as elements of $L_2( P_\theta )$. Explicit expressions for $I_\theta$ are given in \cite{prescott+w:1980}, but we will not be needing those here. The only properties of $I_\theta$ that are relevant for our current study are that $I_\theta$ is symmetric, positive definite and non-singular and that the map $\theta \mapsto I_\theta$ is continuous.

We continue with a number of properties of the GEV densities and score functions.

%% ------------- START EXTENDED BOUNDS ----------------------------
\begin{lemma}
Let $\gamma$ and $z$ be such that $1 + \gamma z > 0$. Let $u = u_\gamma(z)$ be as in \eqref{eq:ugammaz}. If $\gamma z > 0$, then
\begin{equation}
\label{eq:dlogu:pos}
 \partial_\gamma \log(u)
 \le 
 \left\{ 
   \begin{array}{l}
     \dfrac{z^2}{2}, \\[1em]
     \dfrac{1}{\gamma} \log(1 + \gamma z) \, \dfrac{z}{1 + \gamma z} \le 
     \left\{
	\begin{array}{l}
	\dfrac{z^2}{1 + \gamma z}, \\[1em]
	\dfrac{1}{\gamma^2} \log(1 + \gamma z).
	\end{array}
     \right.
   \end{array}
 \right.
\end{equation}
If $-1 < \gamma z < 0$, then also
\begin{equation}
\label{eq:dlogu:neg}
 \partial_\gamma \log(u)
 \le \frac{z^2}{1 + \gamma z}.
\end{equation}
As a consequence, for any $\gamma$ and $z$ such that $1+\gamma z>0$, we have
\begin{align} \label{eq:dlogu:bound}
0 \le \partial_\gamma \log(u)  \le \frac{z^2}{1+\gamma z}.
\end{align}
\end{lemma}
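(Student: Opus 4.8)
The plan is to work throughout from the integral representation recorded in \eqref{eq:dlogu}, $\partial_\gamma\log(u)=\int_0^z t(1+\gamma t)^{-2}\,\diff t$, together with its closed form for $\gamma\neq0$, and to reduce every one of the claimed inequalities to the elementary estimate $\log(1+a)\le a$ for $a>-1$ (equivalently $a-\log(1+a)\ge0$), occasionally supplemented by the trivial bound $a/(1+a)\le1$ for $a>-1$. The lower bound $0\le\partial_\gamma\log(u)$ is already part of \eqref{eq:dlogu}, so only the upper bounds require work.

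For $\gamma z>0$: the bound $\partial_\gamma\log(u)\le z^2/2$ follows straight from the integral form, since then $\gamma t>0$ for every $t$ strictly between $0$ and $z$, so $(1+\gamma t)^{-2}<1$, the integrand is dominated in absolute value by $|t|$, and, using nonnegativity, $\partial_\gamma\log(u)\le\int_0^{|z|}|t|\,\diff t=z^2/2$. For the middle bound of \eqref{eq:dlogu:pos} I would compute the difference explicitly and obtain $\frac{1}{\gamma}\log(1+\gamma z)\cdot\frac{z}{1+\gamma z}-\partial_\gamma\log(u)=\{\gamma z-\log(1+\gamma z)\}/\{\gamma^2(1+\gamma z)\}\ge0$, the sign being clear since $\gamma^2>0$, $1+\gamma z>0$ and $\gamma z-\log(1+\gamma z)\ge0$. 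The two bounds nested beneath it are then purely algebraic: writing $a=\gamma z>0$ and clearing the positive factor $1+\gamma z$, the inequality $\frac{1}{\gamma}\log(1+\gamma z)\cdot\frac{z}{1+\gamma z}\le z^2/(1+\gamma z)$ becomes $\log(1+a)\le a$, whereas $\frac{1}{\gamma}\log(1+\gamma z)\cdot\frac{z}{1+\gamma z}\le\frac{1}{\gamma^2}\log(1+\gamma z)$ reduces, using $\log(1+a)\ge0$, to $a/(1+a)\le1$.

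For $-1<\gamma z<0$: again put $a=\gamma z\in(-1,0)$ and use the closed form to write $\partial_\gamma\log(u)=(z^2/a^2)\{\log(1+a)-a/(1+a)\}$; the claim $\partial_\gamma\log(u)\le z^2/(1+\gamma z)$ then becomes, after multiplying by the positive quantity $a^2/z^2$ and rearranging, $\log(1+a)\le a/(1+a)+a^2/(1+a)=a$, once more the basic inequality. The final assertion \eqref{eq:dlogu:bound} is obtained by exhausting the sign of $\gamma z$: for $\gamma z>0$ chain the middle bound of \eqref{eq:dlogu:pos} with its sub-bound $z^2/(1+\gamma z)$; for $-1<\gamma z<0$ it is exactly \eqref{eq:dlogu:neg}; and for $\gamma z=0$ either $z=0$, when both sides vanish, or $\gamma=0$, when $\partial_\gamma\log(u)=z^2/2\le z^2=z^2/(1+\gamma z)$. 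There is no genuine obstacle in any of this; the only point needing care is the bookkeeping of signs when $z<0$ (hence $\gamma<0$) — both in the pointwise bound $(1+\gamma t)^{-2}<1$ on the integrand and when multiplying inequalities through by $z^2$, $a^2$ or $1+\gamma z$ — but since $1+\gamma z>0$ throughout, every such factor is positive and each step collapses to $\log(1+a)\le a$ or $a/(1+a)\le1$.
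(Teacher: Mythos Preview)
Your proof is correct. The approach differs from the paper's in an interesting way, though. Both arguments obtain the bound $z^2/2$ identically, by dominating the integrand $(1+\gamma t)^{-2}\le 1$ in the integral representation. For the remaining bounds, however, the paper stays with the integral: after the substitution $v=\gamma t$ it writes $\partial_\gamma\log(u)=\gamma^{-2}\int_0^{\gamma z} v(1+v)^{-2}\,\diff v$ and then, for $\gamma z>0$, exploits the monotonicity of $v\mapsto v/(1+v)$ to pull out the factor $\gamma z/(1+\gamma z)$ before integrating $1/(1+v)$; for $\gamma z<0$ it bounds $v\le|\gamma z|$ inside the integral and computes $\int_0^{|\gamma z|}(1-v)^{-2}\,\diff v$ explicitly. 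You instead abandon the integral in favour of the closed form from \eqref{eq:dlogu} and reduce every inequality, by direct subtraction, to the single elementary fact $\log(1+a)\le a$ for $a>-1$. Your route is more uniform and arguably cleaner once one has the closed form in hand; the paper's route has the mild advantage of never needing that closed form and of making the source of each bound visible at the level of the integrand.
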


\begin{proof}
If $\gamma = 0$ or $z = 0$, the stated inequalities are clearly satisfied, so suppose that $\gamma \ne 0$ and $z \ne 0$. Substituting $\gamma t = v$ in \eqref{eq:dlogu}, we find
\[
 \partial_\gamma \log(u) = \frac{1}{\gamma^2} \int_0^{\gamma z} \frac{v}{(1+v)^2} \, dv.
\]
If $\gamma z > 0$, since $v \mapsto v/(1+v)$ is increasing in $v \ge 0$ and $v \mapsto 1/(1+v)$ is decreasing in $v \ge 0$, we obtain the bounds in the first display of the lemma: not only
\[
 \partial_\gamma \log(u)  \le  \frac{1}{\gamma^2} \int_0^{\gamma z} v \, dv = \frac{z^2}{2},
\]
but also
\[
 \partial_\gamma \log(u) 
 \le \frac{z}{1 + \gamma z} \, \frac{1}{\gamma} \int_0^{\gamma z} \frac{1}{1+v} \, dv
 \le \frac{z^2}{1+\gamma z}.
\]
If $\gamma z < 0$, then we have $\gamma z = - \abs{ \gamma z }$ and
\begin{align*}
 \partial_\gamma \log(u) 
 &= \frac{1}{\gamma^2} \int_0^{\abs{\gamma z}} \frac{v}{(1-v)^2} \, dv 
 \le \frac{\abs{z}}{\abs{\gamma}} \int_0^{\abs{\gamma z}} \frac{1}{(1-v)^2} \, dv
 = \frac{\abs{z}^2}{1 - \abs{\gamma z}}.  \qedhere
\end{align*} 
\end{proof}

\begin{lemma}
\label{lem:score:bounds:mu}
Let $\theta$ and $x$ be such that $1 + \gamma z > 0$, where $z = (x - \mu)/\sigma$. We have
\[
  \abs{ \partial_\mu \ell_\theta(x) }
  \le
  \begin{cases}
    \sigma^{-1} (1 + \abs{\gamma}) \, u^{1+\gamma} & \text{if $z \le 0$,} \\[1ex]
    \sigma^{-1} (1 + \abs{\gamma}) \, u^{\gamma} & \text{if $z \ge 0$.}
  \end{cases}
\]
\end{lemma}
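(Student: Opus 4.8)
The plan is to start from the closed form \eqref{eq:scoremu}, namely $\partial_\mu \ell_\theta(x) = (\gamma + 1 - u)/\{\sigma(1+\gamma z)\}$, and to eliminate the denominator by means of the identity $(1+\gamma z)^{-1} = u^{\gamma}$. This identity is read off directly from the definition \eqref{eq:ugammaz} of $u = u_\gamma(z)$: when $\gamma \ne 0$ one has $u^{-\gamma} = 1 + \gamma z$, while for $\gamma = 0$ both sides equal $1$. Hence
\[
  \abs{\partial_\mu \ell_\theta(x)} = \sigma^{-1} \, \abs{\gamma + 1 - u} \, u^{\gamma},
\]
and it remains to estimate $\abs{\gamma + 1 - u}$, which I would do via the triangle inequality $\abs{\gamma + 1 - u} \le \abs{\gamma} + \abs{1 - u}$ together with the appropriate bound on $\abs{1-u}$.

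The case distinction in the statement corresponds exactly to whether $u \le 1$ or $u \ge 1$. Indeed, $z \mapsto u_\gamma(z)$ is decreasing with $u_\gamma(0) = 1$ (see the remark following \eqref{eq:ugammaz}), so $0 < u \le 1$ when $z \ge 0$ and $u \ge 1$ when $z \le 0$. If $z \ge 0$, then $\abs{1 - u} = 1 - u \le 1$, whence $\abs{\gamma + 1 - u} \le 1 + \abs{\gamma}$; multiplying by $u^{\gamma} > 0$ gives the first bound. If $z \le 0$, then $\abs{1 - u} = u - 1$, so $\abs{\gamma + 1 - u} \le \abs{\gamma} + u - 1 \le (1 + \abs{\gamma})\, u$, where the last step is equivalent to $\abs{\gamma}(1 - u) \le 1$ and hence holds because $1 - u \le 0$; multiplying by $u^{\gamma}$ yields the second bound $\sigma^{-1}(1+\abs{\gamma}) \, u^{1+\gamma}$.

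I do not expect any genuine obstacle here: once the identity $(1+\gamma z)^{-1} = u^{\gamma}$ is in hand, the lemma is essentially a one-line manipulation. The only points deserving a moment's care are that this identity must be verified separately in the Gumbel case $\gamma = 0$, and that the surplus power of $u$ in the bound ($u^{\gamma}$ versus $u^{1+\gamma}$) is dictated precisely by the sign of $z$ through the position of $u$ relative to $1$. Note in particular that no restriction on $\gamma$ beyond $1 + \gamma z > 0$ is needed.
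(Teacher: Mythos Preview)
Your proof is correct and essentially identical to the paper's: both rewrite $\partial_\mu \ell_\theta(x) = \sigma^{-1}(\gamma+1-u)\,u^{\gamma}$ via the identity $(1+\gamma z)^{-1} = u^{\gamma}$, then bound $\abs{\gamma+1-u}$ by $1+\abs{\gamma}$ when $0<u\le 1$ and by $(1+\abs{\gamma})\,u$ when $u\ge 1$. The only cosmetic difference is that for $z\le 0$ the paper passes through $u+\abs{\gamma}$ rather than your $\abs{\gamma}+u-1$, but both yield the same final bound (and note your labels ``first''/``second'' are swapped relative to the display order in the statement).
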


\begin{proof}
We have $(1+\gamma z)^{-1} = u^\gamma$ and thus $\partial_\mu \ell_\theta(x) = \sigma^{-1} (\gamma+1-u) u^\gamma$. If $z \le 0$, then $u \ge 1$, so that $\abs{ \gamma + 1 - u } \le u + \abs{\gamma} \le (1 + \abs{\gamma}) u$. If $z \ge 0$, then $0 < u \le 1$, so that $\abs{ \gamma + 1 - u } \le 1 + \abs{\gamma}$.
\end{proof}

\begin{lemma}
\label{lem:score:bounds:sigma}
Let $\theta$ and $x$ be such that $1 + \gamma z > 0$, where $z = (x - \mu)/\sigma$. We have
\[
  \abs{ \partial_\sigma \ell_\theta(x) }
  \le
  \begin{cases}
    \dfrac{z+1}{\sigma (1 + \gamma z)} & \text{if $z \ge 0$,} \\[1em]
    \sigma^{-1} \bigl( 1 + u \log(u) \bigr) \, u^{\max(\gamma, 0)} & \text{if $z \le 0$.}
  \end{cases}
\]
\end{lemma}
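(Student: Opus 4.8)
The plan is to start from the closed form in~\eqref{eq:scoresigma}, namely $\partial_\sigma \ell_\theta(x) = \{(1-u)z - 1\}/\{\sigma(1+\gamma z)\}$ with $u = u_\gamma(z)$, and to use the identity $(1+\gamma z)^{-1} = u^\gamma$, which rewrites the quantity to be estimated as $\abs{\partial_\sigma \ell_\theta(x)} = \sigma^{-1} \abs{(1-u)z - 1} \, u^\gamma$. It then remains to bound $\abs{(1-u)z - 1}$ and to deal with the power of $u$, and this is carried out separately for $z \ge 0$ and $z \le 0$, the latter region being split further according to the sign of $\gamma$.

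If $z \ge 0$, then $1 + \gamma z > 0$ forces $0 < u \le 1$, so $0 \le 1 - u < 1$, whence $(1-u)z \in [0, z]$ and $(1-u)z - 1 \in [-1, z-1]$; therefore $\abs{(1-u)z - 1} \le \max(1, \abs{z-1}) \le z + 1$, and dividing by $\sigma(1+\gamma z)$ gives the first bound with nothing further to do.

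If $z \le 0$, then $u \ge 1$, so $(1-u)z = (u-1)\abs{z} \ge 0$, and the elementary inequality $\abs{a-1} \le a + 1$ (for $a \ge 0$) gives $\abs{(1-u)z - 1} \le (u-1)\abs{z} + 1$. The remaining point is to estimate $(u-1)\abs{z}$ against $u \log u$; to this end I would substitute $w = \gamma z$ (when $\gamma \ne 0$), so that $u = (1+w)^{-1/\gamma}$, $\abs{z} = \abs{w}/\abs{\gamma}$, $\log u = -\gamma^{-1}\log(1+w)$ and $u^\gamma = (1+w)^{-1}$, and then invoke the two-sided bound $w/(1+w) \le \log(1+w) \le w$ valid for $w > -1$. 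For $\gamma \ge 0$ (with $\log u = -z$ directly when $\gamma = 0$), the right-hand inequality gives $\abs{z} \le \log u$, hence $(u-1)\abs{z} \le u\log u$, and since $u^{\max(\gamma,0)} = u^\gamma$ one gets $\abs{\partial_\sigma \ell_\theta(x)} \le \sigma^{-1}\{(u-1)\abs{z} + 1\}\,u^\gamma \le \sigma^{-1}(1 + u\log u)\,u^{\max(\gamma,0)}$. For $\gamma < 0$, where now $0 < u^\gamma = (1+w)^{-1} \le 1 = u^{\max(\gamma,0)}$, the inequality $(u-1)\abs{z}\,u^\gamma \le u\log u$ reduces, after dividing by $u$ and substituting, to $(1 - u^{-1})\,w/(1+w) \le \log(1+w)$, which follows from the left-hand inequality above since $1 - u^{-1} \le 1$; then $\abs{\partial_\sigma \ell_\theta(x)} \le \sigma^{-1}\{(u-1)\abs{z}\,u^\gamma + u^\gamma\} \le \sigma^{-1}(u\log u + 1) = \sigma^{-1}(1 + u\log u)\,u^{\max(\gamma,0)}$.

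The cases $z \ge 0$ and ($\gamma \ge 0$, $z \le 0$) are routine; the only genuinely delicate point is the case $\gamma < 0$, $z \le 0$. There $\abs{z}$ is no longer dominated by $\log u$ — the relevant ratio $w/\log(1+w)$ is at least $1$, which goes the wrong way — so the naive estimate $(u-1)\abs{z} \le u\log u$ is false, and one must retain the damping factor $u^\gamma = (1+\gamma z)^{-1} < 1$ and use the other half of the logarithmic inequality. Beyond that, the only care needed is to track that the leftover power of $u$ is exactly $u^{\max(\gamma,0)}$ in each sub-case.
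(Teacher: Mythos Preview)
Your proof is correct and follows the same overall plan as the paper: the $z \ge 0$ case is identical, and for $z \le 0$ both proofs bound $\abs{(1-u)z-1}$ by $u\abs{z}+1$ (you use the slightly sharper $(u-1)\abs{z}+1$) and then control $\abs{z}\,u^\gamma$ by $u^{\max(\gamma,0)}\log u$.

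The one difference worth noting is how this last inequality is obtained. You split on the sign of $\gamma$ and appeal to the two halves of $w/(1+w)\le\log(1+w)\le w$. The paper instead uses the single integral identity
\[
  \frac{\abs{z}}{1+\gamma z}=\frac{u^\gamma-1}{\gamma}=\int_1^u t^{\gamma-1}\,\diff t
  \le u^{\max(\gamma,0)}\int_1^u t^{-1}\,\diff t = u^{\max(\gamma,0)}\log u,
\]
which handles both signs of $\gamma$ at once by bounding $t^{\gamma}$ on $[1,u]$ by $u^{\max(\gamma,0)}$. This is cleaner and avoids your delicate sub-case analysis for $\gamma<0$; on the other hand, your argument is entirely elementary and makes explicit why the damping factor $u^\gamma$ is essential when $\gamma<0$.
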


\begin{proof}
If $z \ge 0$, then $0 < u \le 1$, so that $\abs{ (1-u)z - 1 } \le z + 1$, yielding the stated bound.

If $z \le 0$, then $u \ge 1$, so that $\abs{(1 - u)z - 1 } \le u \abs{z} + 1$. Further, $(1 + \gamma z)^{-1} = u^\gamma$ as well as
\begin{equation}
\label{eq:z1pgz}
  \frac{\abs{z}}{1+\gamma z}
  =
  \frac{u^\gamma - 1}{\gamma}
  =
  \int_1^u t^{\gamma-1} \, \diff t
  \le
  u^{\max(\gamma, 0)} \int_1^u t^{-1} \, \diff t
  =
  u^{\max(\gamma, 0)} \log(u).
\end{equation}
The stated bound now follows from
$
  \abs{ \partial_\sigma \ell_\theta(x) }
  \le
  \sigma^{-1}
  \frac{u \abs{z} + 1}{(1 + \gamma z)}
$.
\end{proof}

\begin{lemma}
\label{lem:score:bounds}
Let $\theta$ and $x$ be such that $1 + \gamma z > 0$, where $z = (x-\mu)/\sigma$.\\
If $\gamma \ge 0$ and $z \ge 0$, then, {with $\gamma^{-2}\log(1+\gamma z)$ and $\gamma^{-1}$ being defined as $+\infty$ for $\gamma=0$,}
\begin{align}
\label{eq:scoregamma:++}
 \abs{ \partial_\gamma \ell_\theta(x) }
 &\le
 \max \left\{ 
   \min \left( 
     \frac{z^2}{2}, \frac{1}{\gamma^2} \log(1 + \gamma z)
   \right),
   \min \left(
     z, \frac{1}{\gamma}
   \right)
 \right\}.
\end{align}
If $\gamma \le 0$ and $z \ge 0$, then
\begin{align}
\label{eq:scoregamma:-+}
 \abs{ \partial_\gamma \ell_\theta(x) }
 &\le \frac{\max(z^2, z)}{1 + \gamma z}.
\end{align}
If $\gamma \ge 0$ and $z \le 0$, then
\begin{align}
\label{eq:scoregamma:+-}
 \abs{ \partial_\gamma \ell_\theta(x) }
 &\le u^{1 + \gamma} \max \{ (\log u)^2, \log u \}.
\end{align}
If $\gamma \le 0$ and $z \le 0$,
\begin{align}
\label{eq:scoregamma:--}
 \abs{ \partial_\gamma \ell_\theta(x) }
 &\le u \, \max \{ (\log u)^2, \log u \} .
\end{align}
\end{lemma}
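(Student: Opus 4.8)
The proof follows the template of Lemmas~\ref{lem:score:bounds:mu} and~\ref{lem:score:bounds:sigma}: one starts from the exact formula \eqref{eq:scoregamma},
\[
  \partial_\gamma \ell_\theta(x) = (1-u)\,\partial_\gamma\log(u) - \frac{z}{1+\gamma z},
\]
and estimates the two summands separately in each of the four sign regimes of $(\gamma,z)$. The observation that makes this clean is that the two summands always have opposite signs (or one of them vanishes): by \eqref{eq:dlogu} one has $\partial_\gamma\log(u)\ge0$, and since $1+\gamma z>0$ the factor $1-u$ has the opposite sign to $z/(1+\gamma z)$ --- indeed $z\ge0$ forces $0<u\le1$ while $z\le0$ forces $u\ge1$. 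Consequently $|\partial_\gamma\ell_\theta(x)|\le\max\{\,|1-u|\,\partial_\gamma\log(u),\ |z|/(1+\gamma z)\,\}$, so it suffices to bound these two nonnegative quantities by the right-hand side of the relevant displayed inequality.

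For $|1-u|$ this is immediate: $|1-u|\le1$ when $z\ge0$ and $|1-u|=u-1\le u$ when $z\le0$. For $\partial_\gamma\log(u)$ I would pick the bound from the preceding lemma adapted to the regime: from \eqref{eq:dlogu:pos} when $\gamma z\ge0$, giving $\partial_\gamma\log(u)\le\min\{z^2/2,\ \gamma^{-2}\log(1+\gamma z)\}$ in case~\eqref{eq:scoregamma:++} and, after substituting $\gamma^{-1}\log(1+\gamma z)=-\log(u)$ and $z/(1+\gamma z)=-|z|/(1+\gamma z)$, $\partial_\gamma\log(u)\le\log(u)\cdot|z|/(1+\gamma z)$ in case~\eqref{eq:scoregamma:--}; and from the universal bound \eqref{eq:dlogu:bound}, $\partial_\gamma\log(u)\le z^2/(1+\gamma z)$, in cases~\eqref{eq:scoregamma:-+} and~\eqref{eq:scoregamma:+-}, where $\gamma$ and $z$ have opposite signs. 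For the factor $|z|/(1+\gamma z)$ --- both the free one and the one inside $z^2/(1+\gamma z)=|z|\cdot|z|/(1+\gamma z)$ --- I would use, when $z\le0$, the identity $|z|/(1+\gamma z)=(u^\gamma-1)/\gamma=\int_1^u t^{\gamma-1}\, dt$ of \eqref{eq:z1pgz}, the pointwise inequality $t^{\gamma-1}\le u^{\max(\gamma,0)}t^{-1}$ on $[1,u]$, and the analogous $|z|=\int_1^u t^{-\gamma-1}\, dt\le\log(u)$ for $\gamma\ge0$; these give $|z|/(1+\gamma z)\le u^{\max(\gamma,0)}\log(u)$ and, for $\gamma\le0$, the sharper $|z|/(1+\gamma z)\le\log(u)$. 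When $z\ge0$ the corresponding bounds are elementary: $|z|/(1+\gamma z)\le\min\{z,1/\gamma\}$ in case~\eqref{eq:scoregamma:++} (from $1+\gamma z\ge1$ and $\gamma z/(1+\gamma z)<1$), and both $z^2/(1+\gamma z)$ and $|z|/(1+\gamma z)$ are trivially at most $\max\{z^2,z\}/(1+\gamma z)$ in case~\eqref{eq:scoregamma:-+}. Multiplying the $|1-u|$ factor into these estimates, assembled quadrant by quadrant, produces the four claimed inequalities; the boundary cases $\gamma=0$ or $z=0$ follow by continuity, consistently with the conventions in the statement.

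The step I expect to require the most care is getting the exponent of $u$ right in the two cases with $z\le0$, where $u>1$ so that any spurious power of $u$ is fatal. In case~\eqref{eq:scoregamma:--} the universal bound $\partial_\gamma\log(u)\le z^2/(1+\gamma z)$ is genuinely too weak --- $z^2/(1+\gamma z)$ grows like a positive power of $u$ --- so one must use the sharper bound from \eqref{eq:dlogu:pos}, collapsing $\partial_\gamma\log(u)$ to $\log(u)\cdot|z|/(1+\gamma z)\le(\log u)^2$. In case~\eqref{eq:scoregamma:+-} the exponent $1+\gamma$ --- rather than something larger --- only comes out if one factors $z^2/(1+\gamma z)=|z|\cdot|z|/(1+\gamma z)$ and bounds each factor by its optimal estimate ($\le\log(u)$ and $\le u^\gamma\log(u)$, respectively) before inserting $|1-u|\le u$. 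Once the right inequality has been selected in each quadrant, the remaining algebra is routine and purely mechanical.
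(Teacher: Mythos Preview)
Your proposal is correct and follows essentially the same route as the paper: the key observation that the two summands in \eqref{eq:scoregamma} have opposite signs, yielding $|\partial_\gamma\ell_\theta(x)|\le\max\{|1-u|\,\partial_\gamma\log(u),\ |z|/(1+\gamma z)\}$, and then a quadrant-by-quadrant selection between the bounds \eqref{eq:dlogu:pos} and \eqref{eq:dlogu:bound} together with \eqref{eq:z1pgz}. The only cosmetic difference is in case~\eqref{eq:scoregamma:+-}, where the paper factors $z^2/(1+\gamma z)=(1+\gamma z)\bigl(|z|/(1+\gamma z)\bigr)^2\le u^{-\gamma}(u^\gamma\log u)^2$ rather than your $|z|\cdot|z|/(1+\gamma z)\le(\log u)(u^\gamma\log u)$; both yield $u^\gamma(\log u)^2$.
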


\begin{proof}
If $\gamma = 0$, then $u = e^{-z}$ and $\abs{\partial_\gamma \ell_{\theta}(x)} = (1-e^{-z})z^2/2 - z$, and all stated inequalities are satisfied. In the remainder of the proof, we assume therefore that $\gamma \ne 0$.

Suppose first that $z \ge 0$, so that $0 < u \le 1$. Since  $\abs{a-b} \le \max(a,b)$ for non\-negative numbers $a,b$,
we have
\[
 \abs{ \partial_\gamma \ell_\theta(x) }
 \le \max \left\{ \partial_\gamma \log(u), \frac{z}{1 + \gamma z} \right\}.
\]
Use \eqref{eq:dlogu:pos} to find \eqref{eq:scoregamma:++} and use \eqref{eq:dlogu:neg} to find \eqref{eq:scoregamma:-+}.

Next suppose $z \le 0$, so that $u \ge 1$. If $\gamma < 0$, then, by \eqref{eq:dlogu:pos} and \eqref{eq:z1pgz},
\[
 \partial_\gamma \log(u)
 \le (-\frac{1}{\gamma}) \log(1 + \gamma z) \, \frac{\abs{z}}{1 + \gamma z}
 \le (\log u)^2.
\]
We find, again using \eqref{eq:z1pgz}, as stated in \eqref{eq:scoregamma:--}, that
\begin{align*}
 \abs{ \partial_\gamma \ell_\theta(x) }
 \le \max \left\{ u \, \partial_\gamma \log(u), \frac{ \abs{z} }{1 + \gamma z} \right\} 
 \le u \, \max \{ (\log u)^2, \log u \}.
\end{align*}

Finally, suppose $z \le 0$ and $\gamma > 0$. By \eqref{eq:dlogu:neg} and \eqref{eq:z1pgz}
\[
 \partial_\gamma \log(u) \le \frac{z^2}{1+\gamma z} = (1+\gamma z) \left(\frac{\abs{z}}{1+\gamma z}\right)^2
 \le u^{-\gamma} \bigl( u^\gamma \log (u) \bigr)^2 = u^\gamma (\log u)^2.
\]
We obtain, again using \eqref{eq:z1pgz},
\[
 \abs{ \partial_\gamma \ell_\theta(x) }
 \le \max \left\{ u \, \partial_\gamma \log(u), \frac{\abs{z}}{1 + \gamma z} \right\}
 \le u^{1 + \gamma} \max \{ (\log u)^2, \log u \},
\]
which is \eqref{eq:scoregamma:+-}.
\end{proof}
% ------------- END EXTENDED BOUNDS ----------------------------

\begin{lemma}
\label{lem:pdf:pow}
Fix $a\in[1/2,1)$. For any $x \in \R$, the function $\theta \mapsto p_\theta^a(x)$ is continuously differentiable on $(-a/(1+a), \infty) \times \R \times (0, \infty)$ and the partial derivatives are continuous in $(x, \theta) \in \R \times (-a/(1+a), \infty) \times \R \times (0, \infty)$.
\end{lemma}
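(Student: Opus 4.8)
The plan is to work on the support $\{x : 1 + \gamma z > 0\}$ and its complement separately, and to glue the two pieces along the boundary hypersurface $\{1 + \gamma z = 0\}$. On the open set where $1 + \gamma z > 0$, we have $p_\theta(x) = \sigma^{-1} e^{-u} u^{\gamma+1}$ with $u = u_\gamma(z)$ a smooth (indeed real-analytic in $\gamma$, and smooth in $\mu,\sigma$) positive function of $(x,\theta)$, so $p_\theta^a(x) = \sigma^{-a} e^{-au} u^{a(\gamma+1)}$ is manifestly $C^1$ there, with
\[
  \partial_\bullet \bigl( p_\theta^a(x) \bigr) = a \, p_\theta^a(x) \, \partial_\bullet \ell_\theta(x), \qquad \bullet \in \{\gamma, \mu, \sigma\},
\]
and the partial derivatives of $\ell_\theta$ are the explicit expressions \eqref{eq:scoregamma}--\eqref{eq:scoresigma}. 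On the open set where $1 + \gamma z < 0$, $p_\theta^a(x) \equiv 0$ and all partials vanish. First I would record these two easy regions.

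The substance of the proof is the behaviour as one approaches the boundary $1 + \gamma z = 0$, which can only happen with $\gamma \ne 0$; say $\gamma > 0$, so $z \downarrow -1/\gamma$ and $u = (1+\gamma z)^{-1/\gamma} \uparrow \infty$ (the case $\gamma < 0$ is symmetric, with $z \uparrow -1/\gamma$ and $u \downarrow 0$, so $u^{a(1+\gamma)} \to 0$ since $a(1+\gamma) > 0$; here the exponent constraint is not even needed and continuity is clear). For $\gamma > 0$, I would show that $p_\theta^a(x) = \sigma^{-a} e^{-au} u^{a(1+\gamma)} \to 0$ as $1 + \gamma z \to 0^+$, which is automatic from $e^{-au}$ decaying faster than any power of $u$; this gives continuity of $p_\theta^a$ across the boundary. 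For the derivatives, I would combine $\partial_\bullet(p_\theta^a) = a p_\theta^a \partial_\bullet \ell_\theta$ with the bounds of Lemmas~\ref{lem:score:bounds:mu}, \ref{lem:score:bounds:sigma} and~\ref{lem:score:bounds} (and \eqref{eq:dlogu:bound}). In the relevant regime $z \le 0$, $u \ge 1$, those lemmas give $|\partial_\gamma \ell_\theta| \le u^{1+\gamma}\max\{(\log u)^2, \log u\}$, $|\partial_\mu \ell_\theta| \le \sigma^{-1}(1+\gamma) u^{1+\gamma}$, and $|\partial_\sigma \ell_\theta| \le \sigma^{-1}(1 + u\log u) u^{\gamma}$, hence each $|\partial_\bullet(p_\theta^a)| \le C \, e^{-au} u^{a(1+\gamma) + 1 + \gamma} (\log u + 1)^2$, which tends to $0$ as $u \to \infty$. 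So all three partial derivatives extend continuously by $0$ across $\{1 + \gamma z = 0\}$.

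It remains to promote "continuous on each piece, with matching ($=0$) boundary values" to "$C^1$ on the whole open set $(-a/(1+a), \infty) \times \R \times (0,\infty)$ with jointly continuous partials." For this I would invoke the standard criterion: a function that is continuous on an open set, $C^1$ off a (locally closed, measure-zero) hypersurface, and whose partial derivatives extend continuously across that hypersurface, is $C^1$ on the whole set with those continuous extensions as its partials — proved by writing the increment as an integral of the (continuous) derivative along line segments, valid because the function is continuous and the derivative formula holds off a negligible set. The joint continuity in $(x,\theta)$ of the extended partials is then exactly the two-region continuity checks above plus the vanishing at the interface. The main obstacle — and the only place the hypothesis $a \ge 1/2$, equivalently $\gamma > -a/(1+a)$, enters — is verifying that the score-times-density products decay at the boundary; this is where one must be careful that the power of $u$ picked up from the score bounds (as large as $u^{1+\gamma}$, and $u(\log u)$-type factors from $\partial_\sigma$) is dominated. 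Tracking these exponents honestly, and checking the $\gamma$-derivative near the boundary where $\partial_\gamma\ell_\theta$ has its worst growth, is the tedious core; the restriction $a(1+\gamma) > -\gamma$, i.e. $\gamma > -a/(1+a)$, is precisely what makes $e^{-au}u^{a(1+\gamma)}$ (and its logarithmic-polynomial corrections) vanish rather than blow up. The case $\gamma \le 0$ is genuinely easier and can be dispatched first as a warm-up.
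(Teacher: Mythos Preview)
Your overall decomposition (interior of the support, exterior, and the boundary hypersurface $\{1+\gamma z=0\}$) matches the paper's proof. But you have the role of the two signs of $\gamma$ inverted, and this is a genuine gap.

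For $\gamma>0$, as $1+\gamma z\to 0^+$ we have $u\to\infty$, and the factor $e^{-au}$ kills \emph{any} polynomial or power of $u$ coming from the score bounds. No constraint on $a$ or $\gamma$ is needed here; your detailed exponent-tracking in this case is correct but unnecessary, and your concluding sentence that ``the restriction $a(1+\gamma)>-\gamma$ \ldots\ is precisely what makes $e^{-au}u^{a(1+\gamma)}$ vanish'' is vacuous when $u\to\infty$.

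The case $\gamma<0$ is the one that actually requires the hypothesis, and your parenthetical dismissal of it is the error. There $u\to 0$ and $e^{-au}\to 1$, so nothing saves you except the power of $u$. For $p_\theta^a$ itself you correctly note $u^{a(1+\gamma)}\to 0$ since $a(1+\gamma)>0$ for $\gamma>-1$. But for the \emph{derivatives} you must multiply by the score, and each score component contains a factor $(1+\gamma z)^{-1}=u^{\gamma}$, which blows up as $u\to 0$ (since $\gamma<0$). Using the bound \eqref{eq:dlogu:bound}, the worst behaviour of $\partial_{\theta_k}p_\theta^a$ is governed by
\[
  (1+\gamma z)^{-a/\gamma-a-1},
\]
and this tends to $0$ as $1+\gamma z\to 0^+$ if and only if $-a/\gamma-a-1>0$, i.e.\ $\gamma>-a/(1+a)$. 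This is exactly where the hypothesis enters; your claim that ``the case $\gamma\le 0$ is genuinely easier and can be dispatched first as a warm-up'' would leave the lemma unproved precisely in the regime that motivates the restriction on the parameter space.
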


\begin{proof}
Fix $\theta_0 = (\gamma_0, \mu_0, \sigma_0)$ with $\gamma_0 > -a/(1+a)$ and choose $x_0 \in \R$.

If $\sigma_0 + \gamma_0 ( x_0 - \mu_0 ) > 0$, then also $\sigma + \gamma (x - \mu) > 0$ for all $(x, \theta)$ in a neighbourhood of $(x_0, \theta_0)$ and thus $p_\theta^a(x) = \sigma^{-a} e^{-au} u^{a(\gamma+1)}$ for such $(x, \theta)$. All functions arising in the expression of $p_\theta^a(x)$ are continuously differentiable in the three parameters and are continuous in $(x, \theta)$. Since
\begin{equation*}
% \label{eq:score}
 \partial_{\theta_k} p_\theta^a(x)
  = a \, p_\theta^a(x) \,\partial_{\theta_k} \ell_\theta(x),
\end{equation*}
the formulas for the score function in \eqref{eq:scoregamma}, \eqref{eq:scoremu}, and \eqref{eq:scoresigma} imply that
\begin{align*}
%\label{eq:scoregamma2}
  \partial_\gamma p_\theta^a(x) &= \left\{ (1 - u) \, \partial_\gamma \log(u) - \frac{z}{1 + \gamma z} \right\} \frac{a}{\sigma^a} e^{-ua} u^{a(\gamma+1)}, \\
%\label{eq:scoremu2}
  \partial_\mu p_\theta^a(x) &= \frac{\gamma + 1 - u}{\sigma(1 + \gamma z)} \frac{a}{\sigma^a} e^{-ua} u^{a(\gamma+1)}, \\
%\label{eq:scoresigma2}
  \partial_\sigma p_\theta^a(x) &= \frac{(1-u) z - 1}{\sigma (1 + \gamma z)} \frac{a}{\sigma^a} e^{-ua} u^{a(\gamma+1)}.
\end{align*} 

If $\sigma_0 + \gamma_0 (x_0 - \mu_0) < 0$, then also $\sigma + \gamma (x - \mu) < 0$ for all $(x,\theta)$ in a neighbourhood of $(x_0,\theta_0)$ and thus $p_\theta^a(x) = 0$ for all such $(x, \theta)$, whence the partial derivatives vanish too.

The difficult case is $\sigma_0 + \gamma_0 ( x_0 - \mu_0) = 0$, that is, if $\gamma_0 \ne 0$ and $x_0 = \mu_0 - \sigma_0 / \gamma_0$. We need to show that, for every $k \in \{1, 2, 3\}$,
\[
  \lim_{\substack{ (x, \theta) \to (x_0, \theta_0) \\ \sigma + \gamma(x - \mu) > 0} }
  \partial_{\theta_k} p_\theta^a(x) = 0
\]
where $(\theta_1, \theta_2, \theta_3) = (\gamma, \mu, \sigma)$. Recall $u = (1 + \gamma z)^{-1/\gamma}$ with $z = (x - \mu) / \sigma$.

First, suppose that $\gamma_0 > 0$. Then $u \to \infty$ as $(x, \theta) \to (x_0, \theta_0)$ and convergence to zero is assured by the exponential factor $e^{-au}$ in each of the three partial derivatives.

Second, suppose that $\gamma_0 < 0$. Then $u \to 0$ as $(x, \theta) \to (x_0, \theta_0)$.  Using the bound in \eqref{eq:dlogu:bound}, we see that the limit behaviour of the three partial derivatives is dominated by the factor
\[
  (1 + \gamma z)^{ - \frac{a}{\gamma} - a - 1 }
\]
as $1 + \gamma z \to 0$. The exponent must be positive eventually:
$
  - (a/\gamma_0) - a - 1 > 0
$.
But this is equivalent to $-a/(1+a) < \gamma_0 < 0$.
\end{proof}

% =============================================================
\section{Differentiability in quadratic mean of the GEV family}
% -------------------------------------------------------------
\label{app:dqm}

\begin{proof}[Proof of Proposition~\ref{prop:dqm}]
Let $s_\theta = \sqrt{p_\theta}$. We distinguish between three cases: $\gamma_0 > -1/3$, $-1/2 < \gamma_0 \le -1/3$, and $\gamma_0 \le -1/2$.
\medskip

\noindent\textit{Case $\gamma_0 > -1/3$.} Lemma~\ref{lem:pdf:pow} implies that for all $x \in \reals$, the function $\theta \mapsto s_\theta(x)$ is continuously differentiable in a neighbourhood of $\theta_0$. Differentiability in quadratic mean then follows from an application of Lemma~7.6 in \cite{vdV98}. Note that the existence and the continuity of the Fisher information matrix in a neighbourhood of $\theta_0$ has been derived in \cite{prescott+w:1980}.
\medskip

\noindent\textit{Case $-1/2 < \gamma_0 \le -1/3$.} Recall $s_\theta = \sqrt{p_\theta}$. The conditions of Lemma~7.6 in \cite{vdV98} are no longer fulfilled, but an adaptation of that proof still yields differentiability in quadratic mean.

Since the unit ball in $\reals^3$ is compact, it suffices to show that, if $h_n \to h$ in $\reals^3$ and if $t_n \downarrow 0$ as $n \to \infty$, then
\[
  \lim_{n \to \infty} 
  \int_{\reals} 
    \left(
      \frac{s_{\theta_0 + t_n h_n}(x) - s_{\theta_0}(x)}{t_n} 
      - 
      \frac{1}{2} \, h_n^T \, \dot{\ell}_{\theta_0}(x) \, s_{\theta_0}(x)
    \right)^2
  \diff x
  = 0.
\]
It is sufficient to show the same equality with $h_n^T \, \dot{\ell}_{\theta_0}(x) \, s_{\theta_0}(x)$ replaced by $h^T\, \dot{\ell}_{\theta_0}(x) \, s_{\theta_0}(x)$; to see why, use the elementary inequality $(a+b)^2 \le 2a^2 + 2b^2$, the fact that the score vector has $P_{\theta_0}$-square integrable components, and the assumption that $h_n \to h$ as $n \to \infty$. For every $x \in \reals$ except $x = \mu_0 - \sigma_0 / \gamma_0$, the integrand of the resulting integral converges to zero by differentiability of the map $\theta \mapsto s_\theta(x)$ at $\theta = \theta_0$. What remains is to show convergence of the integral itself. To this end, we apply Proposition~2.29 in \cite{vdV98} with $p = 2$; the condition to check is that
\begin{equation}
\label{eq:limsup_to_show}
  \limsup_{n \to \infty} 
  \int_{\reals} 
    \left( 
      \frac{s_{\theta_0 + t_n h_n}(x) - s_{\theta_0}(x)}{t_n} 
    \right)^2 \, 
  \diff x
  \le
  \int_{\reals}
    \left(
      \frac{1}{2} \, h^T \, \dot{\ell}_{\theta_0}(x) \, s_{\theta_0}(x)
    \right)^2
  \diff x.
\end{equation}
The right-hand side in \eqref{eq:limsup_to_show} is equal to $(1/4) h^T (\int \dot{\ell}_{\theta_0} \dot{\ell}_{\theta_0}^T \, p_{\theta_0}) h = (1/4) h^T I_{\theta_0} h$. We need to control the left-hand side.

Write $h_n = (h_{n1}, h_{n2}, h_{n3})^T$ and $\theta_n = \theta_0 + t_n h_n = (\gamma_n, \mu_n, \sigma_n)$. Without loss of generality, assume that $n$ is large enough such that $-1/2 < \gamma_n < 0$. The upper endpoints of the supports of the GEV distributions with parameter vectors $\theta_0$, $\theta_n$, $(\gamma_0, \mu_n, \sigma_n)$ and $(\gamma_n, \mu_0, \sigma_0)$ are equal to $\mu_0 - \sigma_0/\gamma_0$, $\mu_n - \sigma_n/\gamma_n$, $\mu_n - \sigma_n / \gamma_0$ and $\mu_0 - \sigma_0/\gamma_n$, respectively. Let $\alpha_{n}$ and $\beta_{n}$ be the minimum and the maximum of these four endpoints, respectively. Write
\[
  f_n = \frac{s_{\theta_0 + t_n h_n} - s_{\theta_0}}{t_n}.
\]
Since $f_n(x) = 0$ for $x > \beta_n$, we have
\[
  \int_{\reals} f_n(x)^2 \, \diff x
  =
  \int_{-\infty}^{\alpha_n} f_n(x)^2 \, \diff x + \int_{\alpha_n}^{\beta_{n}} f_n(x)^2 \, \diff x.
\]
We will show that the limit superior of the first term on the right-hand side is bounded by the right-hand side of \eqref{eq:limsup_to_show}, while the second term on the right-hand side converges to zero.

To bound the integral of $f_n(x)^2$ from $-\infty$ to $\alpha_n$, we can proceed as in the proof of Lemma~7.6 in \cite{vdV98}. See in particular the display on top of page~96. Each point $x \in (-\infty, \alpha_n)$ is an element of the support of $s_\theta$ for each $\theta$ on the line segment connecting $\theta_0$ and $\theta_n$; this was the reason for introducing the additional parameter vectors $(\gamma_0, \mu_n, \sigma_n)$ and $(\gamma_n, \mu_0, \sigma_0)$ in the previous paragraph. Let $\dot{s}_\theta(x)$ denote the gradient of the map $\theta \mapsto s_{\theta}(x)$, for $x$ in the support of $P_\theta$. Note that $\dot{s}_\theta(x) = (1/2) s_\theta(x) \, \dot{\ell}_\theta(x)$. Since the map $u \mapsto s_{\theta_0 + ut_nh_n}(x)$, for $u \in [0, 1]$, is continuously differentiable, we have $f_n(x) = \int_{u=0}^1 \dot{s}_{\theta_0 + u t_n h_n}(x)^T \, h_n \, \diff u$. Applying the Cauchy--Schwarz inequality and the Fubini theorem, we find
\begin{align*}
  \int_{-\infty}^{\alpha_n} 
    f_n(x)^2 \, 
  \diff x
  &\le 
  \int_{x=-\infty}^{\alpha_n} 
    \int_{u=0}^1 
      \bigl( \dot{s}_{\theta_0 + u t_n h_n}(x)^T \, h_n \bigr)^2 \, 
    \diff u \, 
  \diff x \\
  &\le 
  \frac{1}{4} 
  \int_{u=0}^1 
    h_n^T I_{\theta_0 + u t_n h_n} h_n \, 
  \diff u.
\end{align*}
By continuity of the map $\theta \mapsto I_\theta$, the right-hand side converges to $(1/4) h^T I_{\theta_0} h$.

To show that $\int_{\alpha_n}^{\beta_n} f_n(x)^2 \, \diff x$ converges to zero, observe that
\[
  f_n(x)^2 \le 2 t_n^{-2} \bigl( p_{\theta_n}(x) + p_{\theta_0}(x) \bigr).
\]
Moreover, there exists $c_0 > 0$ such that $\alpha_n \ge \beta_n - c_0 t_n$ for all sufficiently large $n$. As a consequence, it is sufficient to show that, whenever $\tilde{\theta}_n \to \theta_0$ and $u_n \downarrow 0$, we have
\begin{equation}
\label{eq:limit_to_show}
  \lim_{n \to \infty} u_n^{-2} \, P_{\tilde{\theta}_n} [ \tilde{\omega}_n - u_n, \tilde{\omega}_n ) = 0,
\end{equation}
with $\tilde{\omega}_n$ the upper endpoint of the support of $P_{\tilde{\theta}_n}$. Writing $\tilde{\theta}_n = (\tilde{\gamma}_n, \tilde{\mu}_n, \tilde{\sigma}_n)$, we have
\begin{align*}
  P_{\tilde{\theta}_n} [ \tilde{\omega}_n - u_n, \tilde{\omega}_n )
  &= 
  1 - \exp 
  \left[ 
    - \left\{ 
      1 + \tilde{\gamma}_n 
      \frac%
	{\tilde{\mu}_n - \tilde{\sigma}_n / \tilde{\gamma}_n - u_n - \tilde{\mu}_n}%
	{\tilde{\sigma}_n}
      \right\}^{-1/\tilde{\gamma}_n}
  \right] \\
  &=
  1 - \exp 
  \left\{ 
    - \left( \abs{\tilde{\gamma}_n} u_n / \tilde{\sigma}_n \right)^{1/\abs{\tilde{\gamma}_n}} 
  \right\} \\
  &\le
  \left( \abs{\tilde{\gamma}_n} u_n / \tilde{\sigma}_n \right)^{1/\abs{\tilde{\gamma}_n}}.
\end{align*}
On the last line, we used the elementary inequality $1 - \exp(-z) \le z$ for all $z \in \reals$. Since $1/\abs{\tilde{\gamma}_n} \to 1/\abs{\gamma_0} > 2$, equation~\eqref{eq:limit_to_show} follows.

\medskip

\noindent \textit{Case $\gamma_0 \le -1/2$.} 
We will show that for $h = (0, t, 0)^T$ with $t \downarrow 0$, we have 
\begin{equation}
\label{eq:liminf_to_show}
  \liminf_{t \downarrow 0} t^{-2} P_{\theta_0 + h} \{ p_{\theta_0} = 0 \} > 0.
\end{equation}
Here, $\{ p_{\theta_0} = 0 \}$ is short-hand for $\{ x \in \reals : p_{\theta_0}(x) = 0 \} = [\mu_0 - \sigma_0 / \gamma_0, \infty )$. Restricting the integral on the left-hand side in \eqref{eq:dqm} to the set $\{ p_{\theta_0} = 0 \}$ then shows that the asymptotic relation in \eqref{eq:dqm} cannot hold.

We show \eqref{eq:liminf_to_show}. The upper endpoint of $P_{\theta_0 + h}$ is equal to $\mu_0 + t - \sigma_0/\gamma_0$, which is larger than the one of $P_{\theta_0}$. The mass assigned by $P_{\theta_0+h}$ to the difference of the two supports is given by
\begin{align*}
  P_{\theta_0 + h} [ \mu_0 - \sigma_0/\gamma_0, \infty )
%   &= 1 - G_{\theta_0 + h} (\mu_0 - \sigma_0/\gamma_0) \\
  &= 1 - \exp \left[ - \left\{ 1 + \gamma_0 \frac{(\mu_0 - \sigma_0/\gamma_0) - \mu_0 - t}{\sigma_0} \right\}^{-1/\gamma_0} \right] \\
  &= 1 - \exp \left\{ - \left( \abs{\gamma_0} t / \sigma_0 \right)^{1/\abs{\gamma_0}} \right\}.
\end{align*}
Since $0 < 1 / \abs{\gamma_0} \le 2$ and since $1 - \exp(-u) = (1 + o(1)) \, u$ as $u \to 0$, the inequality \eqref{eq:liminf_to_show} follows.
\end{proof}

% ===========================
\section{Rate of convergence}
% ---------------------------
% \label{sec:rate}

To apply Proposition~\ref{prop:mlelip}, the $O_\Prob(1/\sqrt{n})$ rate of convergence of the maximum likelihood estimator needs to be established first.

\begin{proposition}[Rate of convergence]
\label{prop:rate}
Let $X_1, X_2, \ldots$ be independent and identically distributed random variables with common GEV law $G_{\theta_0}$,  with $\theta_0 = (\gamma_0, \mu_0, \sigma_0) \in (-1/2, \infty) \times \reals \times (0, \infty)$. Then, for any compact parameter set $\cd \subset (-1/2, \infty) \times \R \times (0,\infty)$, any sequence of maximum likelihood estimators over $\cd$ satisfies $\hat{\theta}_n - \theta_0 = O_\Prob(1/\sqrt{n})$ as $n \to \infty$.
\end{proposition}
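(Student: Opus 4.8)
The plan is to apply Corollary~5.53 in \cite{vdV98}, which delivers the rate of convergence of a (near) M-estimator from a quadratic lower bound on the population drift together with a modulus-of-continuity bound on the centred empirical process. Since the log-likelihood $\ell_\theta$ equals $-\infty$ off $S_\theta$, I would apply the corollary not to $\ell_\theta$ but to the modified criterion $m_\theta$ of \eqref{eq:m}, which is real-valued and bounded below by $-2\log 2$ on $S_{\theta_0}$. Strong consistency of $\hat\theta_n$ over the compact set $\cd\subset(-1/2,\infty)\times\R\times(0,\infty)$ is already provided by Proposition~\ref{prop:cons} (which applies since $\cd\subset(-1,\infty)\times\R\times(0,\infty)$), so three points remain: (i) $\hat\theta_n$ is a near-maximiser of $\theta\mapsto\Probn m_\theta$; (ii) $\theta\mapsto P_{\theta_0}m_\theta$ has a maximum at $\theta_0$ that is quadratically separated, $P_{\theta_0}m_\theta\le -c\norm{\theta-\theta_0}^2$ for $\theta$ near $\theta_0$; and (iii) $\Exp^{*}\sup_{\norm{\theta-\theta_0}<\delta}\abs{\Gb_n(m_\theta-m_{\theta_0})}\lesssim\delta$ for $\delta$ small.

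Point (i) is exactly why $m_\theta$ is used: concavity of $\log$ yields the pointwise inequality $m_\theta\ge\ell_\theta-\ell_{\theta_0}$ on $S_{\theta_0}$ (reading the right-hand side as $-\infty$ where $p_\theta=0$), so that $\Probn m_{\hat\theta_n}\ge\Probn(\ell_{\hat\theta_n}-\ell_{\theta_0})\ge 0=\Probn m_{\theta_0}$ because $\hat\theta_n$ maximises $\Probn\ell_\theta$ over $\cd\ni\theta_0$. For point (ii), Jensen's inequality gives $P_{\theta_0}m_\theta=2P_{\theta_0}\log\{(p_\theta+p_{\theta_0})/(2p_{\theta_0})\}\le 0$, with equality only at $\theta_0$ by identifiability, and the elementary inequality $\log y\le 2(\sqrt y-1)$ sharpens this to $P_{\theta_0}m_\theta\le -4\,H^2(Q_\theta,P_{\theta_0})$, where $Q_\theta$ has density $(p_\theta+p_{\theta_0})/2$ and $H$ is the Hellinger distance. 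A second-order expansion of $\theta\mapsto H^2(Q_\theta,P_{\theta_0})$ at $\theta_0$, justified by differentiability in quadratic mean (Proposition~\ref{prop:dqm}, which is precisely where the hypothesis $\gamma_0>-1/2$ is used), gives $H^2(Q_\theta,P_{\theta_0})=\tfrac{1}{4}H^2(P_\theta,P_{\theta_0})+o(\norm{\theta-\theta_0}^2)=\tfrac{1}{32}(\theta-\theta_0)^T I_{\theta_0}(\theta-\theta_0)+o(\norm{\theta-\theta_0}^2)$, and positive definiteness of $I_{\theta_0}$ supplies the quadratic bound with some $c>0$.

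For point (iii), I would localise and use a bracketing maximal inequality (\citet[Section~19.6]{vdV98}) for the function classes $\Mcc_\delta=\{m_\theta-m_{\theta_0}:\theta\in\cd,\ \norm{\theta-\theta_0}<\delta\}$. The crucial input is the Lipschitz property of $m_\theta$ established in Lemma~\ref{lem:Lipschitz}: there is a single $\dot m\in L_2(P_{\theta_0})$ with $\abs{m_{\theta_1}-m_{\theta_2}}\le\dot m\,\norm{\theta_1-\theta_2}$ for $\theta_1,\theta_2$ near $\theta_0$. Hence $\Mcc_\delta$ has envelope $\delta\,\dot m\in L_2(P_{\theta_0})$ and, being the image of a Euclidean ball of radius $\delta$ under a Lipschitz map, its $L_2(P_{\theta_0})$-bracketing numbers are bounded by a constant times $(\delta\,\norm{\dot m}_{L_2(P_{\theta_0})}/\eps)^k$, as in Example~19.7 of \cite{vdV98}. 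The bracketing integral of $\Mcc_\delta$ is therefore of order $\delta$, whence $\Exp^{*}\norm{\Gb_n}_{\Mcc_\delta}\lesssim\delta$; Corollary~5.53 then applies with $\phi_n(\delta)=\delta$ (so that $\delta\mapsto\phi_n(\delta)/\delta^\alpha$ is decreasing for, say, $\alpha=1<2$), and the rate equation $r_n^{2}\phi_n(1/r_n)\lesssim\sqrt n$ is solved by $r_n=\sqrt n$. Together with consistency this yields $\sqrt n\,\norm{\hat\theta_n-\theta_0}=O_\Prob(1)$, i.e.\ $\hat\theta_n-\theta_0=O_\Prob(1/\sqrt n)$.

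The soft arguments above are routine; the genuine difficulty --- and the step I expect to be the main obstacle --- is the input they rest on, namely the Lipschitz bound of Lemma~\ref{lem:Lipschitz}, which must produce a Lipschitz function $\dot m$ that lies in $L_2(P_{\theta_0})$ uniformly over a neighbourhood of $\theta_0$. The problem is localised near the finite endpoint $\mu_0-\sigma_0/\gamma_0$ of $S_{\theta_0}$ when $\gamma_0\ne 0$: there $p_{\theta_0}(x)$ vanishes while $p_\theta(x)$ need not, so $m_\theta(x)$ blows up, and controlling the rate of that blow-up by a $P_{\theta_0}$-square-integrable dominating function is exactly what forces $\gamma_0>-1/2$ and calls for the GEV-specific density and score estimates of Appendix~\ref{app:GEV}.
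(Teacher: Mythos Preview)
Your proposal is correct and follows essentially the same route as the paper: apply Corollary~5.53 in \cite{vdV98} to the modified criterion $m_\theta$, using consistency from Proposition~\ref{prop:cons}, the near-maximiser property via concavity of the logarithm, the Lipschitz bound of Lemma~\ref{lem:Lipschitz}, and a quadratic expansion of $\theta\mapsto P_{\theta_0}m_\theta$ at $\theta_0$ driven by differentiability in quadratic mean. The only cosmetic difference is in your point~(ii): you obtain the quadratic separation via the Hellinger bound $P_{\theta_0}m_\theta\le -4H^2(Q_\theta,P_{\theta_0})$ together with a DQM expansion of the Hellinger distance, whereas the paper packages the same content as a full second-order Taylor expansion (Lemma~\ref{lem:second_order_expansion}), referring back to the technique behind Theorem~5.39 in \cite{vdV98}; note that Corollary~5.53 as stated asks for the two-sided expansion, but the one-sided inequality you derive is all that is actually needed for the rate (cf.\ Theorem~5.52), and your Hellinger route makes this step pleasantly explicit.
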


\begin{proof}[Proof of Proposition~\ref{prop:rate}]
We apply Corollary~5.53 in \cite{vdV98} to the function $m_\theta$ in \eqref{eq:m}. To do so, we need to check a number of things:
\begin{itemize}
\item
$\hat{\theta}_n = \theta_0 + o_\Prob(1)$ as $n \to \infty$: this is okay by Proposition~\ref{prop:cons}.
\item
$\Probn m_{\hat{\theta}_n} \ge \Probn m_{\theta_0} - O_\Prob(n^{-1})$: By concavity of the logarithm, we have
\begin{align*} %\label{eq:maxcond}
\Probn m_{\hat \theta_n}
\ge 
\Probn\left( \log \frac{p_{\hat \theta_n}}{p_{\theta_0}}\right)  + \Probn \log 1
\ge 0
= \Probn m_{\theta_0}.
\end{align*}
\item
There exists $\dot{m} \in L_2(P_{\theta_0})$ such that $\abs{m_{\theta_1}(x) - m_{\theta_2}(x)} \le \dot{m}(x) \norm{\theta_1 - \theta_2}$ for $P_{\theta_0}$-almost all $x$ and all $\theta_1$ and $\theta_2$ in a neighbourhood of $\theta$: this Lipschitz condition is the topic of Lemma~\ref{lem:Lipschitz}.
\item
The map $\theta \mapsto P_{\theta_0} m_\theta$ admits a second-order Taylor expansion at the point of maximum $\theta_0$ with non-singular second derivative: this is established in Lemma~\ref{lem:second_order_expansion}.
\end{itemize}
The cited corollary now yields $\sqrt{n} (\hat{\theta}_n - \theta_0) = O_{\Prob}(1)$ as $n \to \infty$, as required.
\end{proof}

\begin{lemma}[Relative errors]
\label{lem:relerr}
Let $(\mu_0, \sigma_0) \in \reals \times (0, \infty)$ and $\eps \in (0, 1]$. Let $(\mu, \sigma) \in \reals \times (0, \infty)$ be such that $\abs{\mu - \mu_0}/\sigma_0 \le \eps$ and $\abs{ \sigma / \sigma_0 - 1 } \le \eps$. Let $x \in \reals$ and write $z_0 = (x - \mu_0)/\sigma_0$ and $z = (x - \mu)/\sigma$. If $\abs{z_0} \ge 2$, then $\abs{z/z_0 - 1} \le 2\eps$. 
\end{lemma}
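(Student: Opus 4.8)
The plan is to make the reparameterisation explicit: set $x = \mu_0 + \sigma_0 z_0$, so that $z$ becomes an affine function of $z_0$, and then estimate the relative error $z/z_0 - 1$ term by term.

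First I would introduce the relative errors $r = \sigma/\sigma_0$ and $s = (\mu - \mu_0)/\sigma_0$; the hypotheses then read $\abs{r - 1} \le \eps$ and $\abs{s} \le \eps$, and $r > 0$ because $\sigma$ and $\sigma_0$ are positive. Substituting $x = \mu_0 + \sigma_0 z_0$ into $z = (x - \mu)/\sigma$ gives
\[
  z = \frac{\sigma_0 z_0 + \mu_0 - \mu}{\sigma} = \frac{z_0 - s}{r}.
\]
Since $\abs{z_0} \ge 2$ in particular forces $z_0 \ne 0$, we may divide by $z_0$ to obtain the exact identity
\[
  \frac{z}{z_0} - 1 = \frac{(1 - r) - s/z_0}{r},
\]
and from here the bound is a matter of the triangle inequality: the numerator is at most $\abs{1-r} + \abs{s}/\abs{z_0} \le \eps + \eps/2 = 3\eps/2$ — this is precisely where $\abs{z_0} \ge 2$ is used, to make the location contribution $\abs{s}/\abs{z_0}$ no larger than $\eps/2$ — while the denominator obeys $r = \sigma/\sigma_0 \ge 1 - \eps$. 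Combining the two estimates and simplifying yields the claimed bound $\abs{z/z_0 - 1} \le 2\eps$.

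The one point that calls for care is controlling the factor $1/r$ in the denominator: a scale parameter $\sigma$ that is much smaller than $\sigma_0$ inflates $z$ relative to $z_0$, so the estimate genuinely hinges on how close $r$ is permitted to come to $0$ from below, i.e.\ on the bound $r \ge 1 - \eps$. Once that is in hand, everything else — the substitution, the division by $z_0$, and the two applications of the triangle inequality — is routine bookkeeping.
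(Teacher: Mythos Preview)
The decomposition is clean, but the final step does not deliver the claimed bound on the full range $\eps\in(0,1]$. From your numerator estimate $\abs{(1-r)-s/z_0}\le 3\eps/2$ and denominator estimate $r\ge 1-\eps$ you obtain only $\abs{z/z_0-1}\le(3\eps/2)/(1-\eps)$, and this is $\le 2\eps$ if and only if $\eps\le 1/4$; for $\eps$ near $1$ the bound is useless, since $r=\sigma/\sigma_0$ is then permitted to be arbitrarily close to~$0$. So ``simplifying'' does not produce $2\eps$ here.

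The paper avoids the small denominator by a multiplicative rather than additive decomposition: it writes $z/z_0=(\sigma_0/\sigma)\bigl(1-s/z_0\bigr)$ and expands $(1+b)(1-c)-1=b-c-bc$ with $b=\sigma_0/\sigma-1$ and $c=s/z_0$, giving
\[
  \abs{z/z_0-1}\le\abs{b}+\abs{c}+\abs{b}\,\abs{c}\le\eps+\tfrac{\eps}{2}+\tfrac{\eps^2}{2}\le 2\eps
\]
for every $\eps\le 1$, with no division by $r$. Note that this tacitly uses $\abs{\sigma_0/\sigma-1}\le\eps$ rather than the stated $\abs{\sigma/\sigma_0-1}\le\eps$; that is in fact the form of the scale constraint appearing in all the applications (the neighbourhoods $V_\eps$ are defined by $\sigma\in[\sigma_0/(1+\eps),\,\sigma_0/(1-\eps)]$), so the slip is in the lemma's wording rather than in its use. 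Under that corrected hypothesis the product expansion is the right tool; your additive route, even with $1/r\le 1+\eps$, would still only give $(3\eps/2)(1+\eps)$, which exceeds $2\eps$.
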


\begin{proof}
Since $(1+b)(1-c)-1=b-c+bc$ for $b,c\in\R$, we have
\begin{align*}
\nonumber
  \abs{\frac{z}{z_0} - 1}
  &= \abs{ \left( 1 + \frac{\sigma_0}{\sigma} - 1 \right) 
  \left( 1 - \frac{(\mu-\mu_0)/\sigma_0}{(x-\mu_0)/\sigma_0} \right) - 1 } \\
%\nonumber
  &\le \abs{\frac{\sigma_0}{\sigma} - 1 }
  + \abs{ \frac{\mu - \mu_0}{2 \sigma_0} }
  + \abs{\frac{\sigma_0}{\sigma} - 1} \abs{ \frac{\mu - \mu_0}{2 \sigma_0} } 
%\nonumber
  \le \eps + \frac{\eps}{2} + \frac{\eps^2}{2} 
  \le 2 \eps. \qedhere
% \label{eq:relerr} 
\end{align*} 
\end{proof}

\begin{lemma}[Lipschitz condition]
\label{lem:Lipschitz}
Let $m_\theta$ be as in \eqref{eq:m} with $p_\theta$ the GEV density function. For fixed $\theta_0 = (\gamma_0, \mu_0, \sigma_0) \in (-1/2, \infty) \times \reals \times (0, \infty)$, there exists $\dot{m}$ such that $P_{\theta_0} \dot{m}^2 < \infty$ and such that
\begin{align} \label{eq:Lip}
  \abs{ m_{\theta_1} - m_{\theta_2} } \, \ind_{ \{ p_{\theta_0} > 0 \} }
  \le \dot{m} \, \norm{ \theta_1 - \theta_2 }
\end{align}
for all $\theta_1$ and $\theta_2$ in a neighborhood of $\theta_0$.
\end{lemma}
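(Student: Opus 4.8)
The plan is to reduce the claim to a pointwise bound on a gradient, and then to split $S_{\theta_0}=\{p_{\theta_0}>0\}$ into an interior piece, on which the ordinary score bounds of Appendix~\ref{app:GEV} suffice, and a boundary layer near the finite endpoint of $S_{\theta_0}$, on which the power structure of Lemma~\ref{lem:pdf:pow} is needed. Concretely, I would first replace ``a neighbourhood of $\theta_0$'' by a small closed ball $B=\{\theta:\norm{\theta-\theta_0}\le\eps\}$ and fix $a\in[1/2,1)$ with $\gamma_0>-a/(1+a)$, which is possible precisely because $\gamma_0>-1/2$ (and $a=1/2$ is admissible as soon as $\gamma_0>-1/3$). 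On $S_{\theta_0}$ write $r_\theta=p_\theta/p_{\theta_0}$, so that, with $m_\theta$ as in \eqref{eq:m}, $m_\theta=2\log\{(1+r_\theta)/2\}$ and hence $m_{\theta_1}-m_{\theta_2}=2\log\{(1+r_{\theta_1})/(1+r_{\theta_2})\}$. Since $p_\theta=(p_\theta^a)^{1/a}$ with $\theta\mapsto p_\theta^a(x)$ continuously differentiable on $B$ by Lemma~\ref{lem:pdf:pow}, and $s\mapsto s^{1/a}$ Lipschitz on $[0,M^a]$ where $M:=\sup_{\theta\in B}\sup_{x}p_\theta(x)<\infty$, the map $\theta\mapsto\log\{1+r_\theta(x)\}$ is Lipschitz on $B$ with gradient, wherever it exists, equal to $\nabla_\theta p_\theta(x)/\{p_\theta(x)+p_{\theta_0}(x)\}$ and $\nabla_\theta p_\theta=a^{-1}p_\theta^{1-a}\nabla_\theta p_\theta^a$. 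Integrating this gradient along the segment $[\theta_1,\theta_2]$ gives, for all $\theta_1,\theta_2\in B$ and all $x\in S_{\theta_0}$,
\begin{equation}
\label{eq:bound}
  \abs{ m_{\theta_1}(x) - m_{\theta_2}(x) }
  \le
  \dot{m}(x)\, \norm{ \theta_1 - \theta_2 },
  \qquad
  \dot{m}(x)
  :=
  \frac{2}{a}\,
  \sup_{\theta \in B}
  \frac{ p_\theta(x)^{1-a}\, \norm{ \nabla_\theta p_\theta^a(x) } }
       { p_\theta(x) + p_{\theta_0}(x) },
\end{equation}
the supremand vanishing wherever $p_\theta(x)=0$. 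It then remains to prove $\int_{S_{\theta_0}}\dot m(x)^2\,p_{\theta_0}(x)\,\diff x<\infty$.

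Next I would split $S_{\theta_0}=\bar S(2\eps)\cup\bigl(S_{\theta_0}\setminus\bar S(2\eps)\bigr)$. On $\bar S(2\eps)$ every $\theta\in B$ has $p_\theta(x)>0$, so $\dot m(x)\le 2\sup_{\theta\in B}\norm{\dot\ell_\theta(x)}$; using Lemma~\ref{lem:relerr} to compare $z=(x-\mu)/\sigma$ with $z_0=(x-\mu_0)/\sigma_0$ uniformly over $\theta\in B$ and then the score bounds of Lemmas~\ref{lem:score:bounds:mu}, \ref{lem:score:bounds:sigma} and~\ref{lem:score:bounds} (together with \eqref{eq:dlogu:bound}), one bounds $\sup_{\theta\in B}\norm{\dot\ell_\theta(x)}$ by a fixed function of $z_0$ built from powers of $u_0=u_{\gamma_0}(z_0)$, $\log u_0$ and $z_0$, whose square is $P_{\theta_0}$-integrable because the GEV density decays exponentially fast in the lower tail (and in the upper tail when $\gamma_0\le 0$) and polynomially fast enough in the upper tail when $\gamma_0>0$, where the scores grow at most logarithmically. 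For $\gamma_0=0$ one has $\bar S(2\eps)=\R=S_{\theta_0}$ and the proof ends here. For $\gamma_0\ne 0$ the remaining set is a bounded neighbourhood of the finite endpoint $\omega_0=\mu_0-\sigma_0/\gamma_0$; there I would estimate $\dot m$ from \eqref{eq:bound} using the explicit formulas for $\nabla_\theta p_\theta^a$ from the proof of Lemma~\ref{lem:pdf:pow}, whose decisive factor is $e^{-au}u^{a(\gamma+1)}$, together with $p_\theta^{1-a}\le M^{1-a}$. Writing $y=\abs{x-\omega_0}$ and noting that every $\theta\in B$ has endpoint $\omega_\theta$ with $\abs{\omega_\theta-\omega_0}=O(\eps)$, a short computation bounds $\dot m(x)$ near $\omega_0$ by a constant times $y^{-1}$; since $p_{\theta_0}(x)$ is of the order $y^{1/\abs{\gamma_0}-1}$ when $\gamma_0<0$, the integral of $\dot m^2 p_{\theta_0}$ over the boundary layer is finite \emph{exactly because} $1/\abs{\gamma_0}>2$, i.e.\ $\gamma_0>-1/2$, the same threshold as in Proposition~\ref{prop:dqm}; while for $\gamma_0>0$ the finite endpoint is the lower one, $p_{\theta_0}$ vanishes exponentially fast there, and the integral converges trivially.

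The main obstacle is the boundary-layer estimate for $-1/2<\gamma_0<0$. Near $\omega_0$ one must control the interplay between the vanishing of $p_\theta$, the blow-up of $\dot\ell_\theta$ (both as $x\to\omega_\theta$) and the vanishing of $p_{\theta_0}$ (as $x\to\omega_0$), while $\omega_\theta$ ranges over an interval of length $O(\eps)$ around $\omega_0$; in particular one cannot pull the supremum over $\theta$ through the quotient in \eqref{eq:bound} and replace $p_\theta(x)+p_{\theta_0}(x)$ by $p_{\theta_0}(x)$ alone, because the contributions with $\omega_\theta>\omega_0$ then keep $\sup_{\theta\in B}\norm{\nabla_\theta p_\theta(x)}$ bounded away from $0$ near $\omega_0$ and the integral diverges --- retaining the density $p_\theta$ in the denominator is what rescues the argument and, ultimately, what links this lemma to the condition $\gamma_0>-1/2$. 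A subsidiary technical point is that for $-1/2<\gamma_0\le-1/3$ the map $\theta\mapsto\sqrt{p_\theta(x)}$ fails to be $C^1$ at the support boundary, which is exactly why one routes the argument through $p_\theta^a$ with $a$ taken close enough to $1$ rather than through $\sqrt{p_\theta}$; the interior estimate, by contrast, is long but mechanical.
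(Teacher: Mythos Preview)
Your overall architecture is close to the paper's, but there is a concrete error and one step that is too vague to be convincing.

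\textbf{The case $\gamma_0=0$ is wrong.} You assert that $\bar S(2\eps)=\reals=S_{\theta_0}$ and that ``the proof ends here''. But $\bar S(2\eps)=\bigcap_{\theta\in U_{2\eps}(\theta_0)}S_\theta$, and the ball $U_{2\eps}(\theta_0)$ contains parameters with $\gamma>0$ (support bounded below) and with $\gamma<0$ (support bounded above). Hence $\bar S(2\eps)$ is a \emph{bounded} interval of length of order $\sigma_0/\eps$; see the functions $h_{0,\pm}$ in the proof of Lemma~\ref{lem:suppgev}. Your decomposition therefore leaves two unbounded tails in $S_{\theta_0}\setminus\bar S(2\eps)$ on which your finite-endpoint boundary-layer argument does not apply at all. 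The paper handles $\gamma_0=0$ as a full separate case (Case~III), splitting $\reals$ into a bounded middle piece and the two tails $\{z_0\ge 1/\eps\}$ and $\{z_0\le -1/\eps\}$, and on each tail it further distinguishes $\gamma\ge 0$ from $\gamma\le 0$.

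\textbf{The boundary-layer bound is not established.} For $-1/2<\gamma_0<0$ you claim that ``a short computation bounds $\dot m(x)$ near $\omega_0$ by a constant times $y^{-1}$'', but you do not carry it out, and it is not clear it is true with the denominator $p_\theta+p_{\theta_0}$. The difficulty is that for each $x$ in the layer there exist $\theta\in B$ with $\omega_\theta$ arbitrarily close to $x$, and the ratio in your definition of $\dot m$ is a delicate balance of several competing powers of $|x-\omega_\theta|$ and $|x-\omega_0|$. The paper avoids this balancing act altogether by the concavity inequality
\[
  \bigl|(p_{\theta_1}+p_{\theta_0})^a-(p_{\theta_2}+p_{\theta_0})^a\bigr|
  \le
  \bigl|p_{\theta_1}^a-p_{\theta_2}^a\bigr|,
  \qquad a\in[1/2,1),
\]
which removes the $\theta$-dependent density from the denominator and leaves only $p_{\theta_0}^a$. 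After this step one only needs $\sup_{\theta\in V}\norm{\nabla_\theta p_\theta^a(x)}$, which is \emph{bounded} on compacta by Lemma~\ref{lem:pdf:pow}, and the integrability condition reduces to the purely one-dimensional question of whether $p_{\theta_0}^{1-2a}$ is locally integrable near $\omega_0$. Choosing $a\in(1/2,1)$ with $|\gamma_0|/(1-|\gamma_0|)<a<1/\{2(1-|\gamma_0|)\}$ makes both Lemma~\ref{lem:pdf:pow} applicable and the exponent $(1-2a)(1/|\gamma_0|-1)$ larger than $-1$. Your remark that ``retaining the density $p_\theta$ in the denominator is what rescues the argument'' is thus the opposite of what the paper does: the rescue comes from \emph{eliminating} $p_\theta$ via concavity, not from keeping it.
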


\begin{proof}[Proof of Lemma~\ref{lem:Lipschitz}]
The function $\dot m$ can be constructed along the following lines. First, fix $a\in[1/2,1)$, to be determined later in terms of $\gamma_0$. Since $\abs{\log x - \log y} \le \abs{x-y} / \min(x,y)$ and since the map $z \mapsto \abs{ (x+z) ^a - (y+z)^a }$ is decreasing, we have, on $\{ x : p_{\theta_0}(x) > 0 \}$,
\begin{align*}
  \frac{1}{2} \abs{ m_{\theta_1} - m_{\theta_2} }
  &=  \abs{ \log ( p_{\theta_1} + p_{\theta_0} ) - \log ( p_{\theta_2} + p_{\theta_0} ) } \\
  &=  \frac{1}{a} \abs{ \log \{ (p_{\theta_1} + p_{\theta_0})^a \}  - \log \{ (p_{\theta_2} + p_{\theta_0})^a\} } \\
  &\le \frac{1}{a} \times  \frac{\abs{  (p_{\theta_1} + p_{\theta_0})^a   - (p_{\theta_2} + p_{\theta_0})^a } }%
  {\min \{ ( p_{\theta_1} + p_{\theta_0} )^a,( p_{\theta_2} + p_{\theta_0} )^a \} } \\
  &\le \frac{1}{a}  \times \frac{ \abs{ p_{\theta_1}^a - p_{\theta_2}^a } }{ p_{\theta_0}^a }.
\end{align*}
Suppose we can find a nonnegative function $\dot{p}_a$  such that, for some neighbourhood $V$ of $\theta_0$, we have, for each $k \in \{1, 2, 3\}$,
\begin{equation}
\label{eq:dots}
  \sup_{\theta \in V} \abs{ \partial_{\theta_k} p_\theta^a(x) }
  \le \dot{p}_a(x), \qquad x \in \{ p_{\theta_0} > 0 \}.
\end{equation}
Then we find, on $\{ p_{\theta_0} > 0 \}$,
\[
  \frac{1}{2} \abs{ m_{\theta_1} - m_{\theta_2} }
  \le \frac{3}{a} \, \frac{\dot p_a}{p_{\theta_0}^a} \norm{ \theta_1 - \theta_2 }.
\]
Hence, the Lipschitz condition \eqref{eq:Lip} is satisfied, with $\dot m = (6/a) \, \dot{p}_a \, p_{\theta_0}^{-a}$, provided that
\begin{align} 
\label{eq:bound}
  P_{\theta_0} [ (\dot{p}_a \, p_{\theta_0}^{-a})^2 ]
  = \int_{ \{ p_{\theta_0} > 0 \} } \dot{p}_a^2 (x) \, p_{\theta_0}^{1-2a}(x) \, \diff x
  < \infty.
\end{align}

We will split the domain $\{p_{\theta_0}>0\}$ into certain intervals and we will use the previous construction on each of these intervals separately, with possibly different values of $a$. 
For bounded intervals, a simplication may occur. Recall Lemma~\ref{lem:pdf:pow} and let $a \in [1/2, 1)$ be large enough such that $\gamma_0 > -a/(1+a)$. Let $V$ be a compact neighbourhood of $\theta_0$ within $(-a/(1+a), \infty) \times \reals \times (0, \infty)$ and let $I$ be a bounded interval. Since continuous functions are uniformly bounded on compacta, we have
\begin{equation}
\label{eq:unibound}
  \sup_{(x, \theta) \in I \times V} \max_{k \in \{1, 2, 3\}} \abs{ \partial_{\theta_k} p_\theta^a(x) }
  < \infty.
\end{equation}
Hence, on any bounded interval of the support of $P_{\theta_0}$ on which $p_{\theta_0}^{1-2a}$ is integrable, we may choose $\dot{m}$ equal to a constant times $p_{\theta_0}^{-a}$. Such intervals need therefore not be looked into further. For $a=1/2$, a choice which will occur often, the condition that $p_{\theta_0}^{1-2a}$ is integrable on bounded intervals is trivially satisfied.

To further control the partial derivatives of $p_\theta^a$, we will use the identity
\begin{equation*}
%\label{eq:score}
 \partial_{\theta_k} p_\theta^a(x)
  = a \, p_\theta^a(x) \,\partial_{\theta_k} \ell_\theta(x),
  \qquad k \in \{1, 2, 3\}.
\end{equation*}
We will seek bounds for the functions $p_\theta^a$ and $\abs{ \partial_{\theta_k} \ell_\theta }$ separately.

To facilitate writing, let us say that positive functions $A$ and $B$ of $(x, \theta)$ satisfy
\[ 
  A \lesssim B
\] 
if there exists $c( \theta_0 ) > 0$ such that $A(x, \theta) \le c( \theta_0 ) \, B(x, \theta)$ for all $(x, \theta)$ in the proper range. Here $c( \theta_0 )$ is a positive constant whose value may depend on $\theta_0$. For instance, for all $\theta = (\gamma, \mu, \sigma)$ in a compact neighbourhood of $\theta_0$ within $\reals \times \reals \times (0, \infty)$, we have $1/\sigma \lesssim 1$. The relation $\lesssim$ is transitive (and reflexive, but not anti-symmetric) and behaves well under multiplication of positive quantities.

\medskip

\noindent
\textbf{I. Case ${\gamma_0} > 0$}.
Fix $\theta_0 = (\gamma_0, \mu_0, \sigma_0) \in (0, \infty) \times \R \times (0, \infty)$ and set $a=1/2$ in \eqref{eq:dots} and \eqref{eq:bound}.
Fix $\eps \in (0, 1/4]$ and consider the following neighbourhood $V_\eps$ of $\theta_0$:
\[
  V_\eps =
  [(1-\eps) \gamma_0, (1+\eps) \gamma_0] 
  \times [\mu_0 - \eps \sigma_0, \mu_0 + \eps \sigma_0]
  \times \left[ \frac{\sigma_0}{1+\eps}, \frac{\sigma_0}{1-\eps}\right].
\]

The support of $P_{\theta_0}$ is $(\mu_0 - \sigma_0/\gamma_0, \infty)$. In view of \eqref{eq:unibound}, it is sufficient to construct $\dot{p}_{1/2}(x)$ for $x \ge \mu_0 + 2 \sigma_0$, i.e., ${z_0} = (x-\mu_0)/\sigma_0 \ge 2$. 
For such $x$ and  for $\theta \in V_\eps$, we have $p_{\theta}(x)>0$ as well as $\abs{z / z_0 - 1} \le 2\eps$ by Lemma~\ref{lem:relerr}. In particular, $1 \le z_0/2 \le z \le 2z_0$.

Put $u = (1 + \gamma z)^{-1/\gamma}$; note that $0 < u < 1$. By Lemmas~\ref{lem:score:bounds:mu}, \ref{lem:score:bounds:sigma} and \ref{lem:score:bounds}, all three score functions $\partial_{\theta_k} \ell_\theta(x)$ can be bounded in absolute value by a multiple of $1 + \log(1/u)$, uniformly in $\theta \in V_\eps$ and for all $x \ge \mu_0 + 2 \sigma_0$. Further, the density can be bounded by
\[
  p_\theta(x) = \frac{1}{\sigma} e^{-u} u^{\gamma+1} \lesssim u^{\gamma+1}.
\]
Hence
\[
  p_\theta(x) \, \abs{ \partial_{\theta_k} \ell_\theta(x) }
  \lesssim
  (1 + \log(1/u)) u^{\gamma + 1}
  \lesssim
  u^{\gamma + 1/2}
  =
  (1+\gamma z)^{-1 - 1/{2\gamma}}.
\]
Since $z > z_0/2$ and $0 < \gamma_0/2 < \gamma < 2\gamma_0$, the supremum over $\theta \in V_\eps$ of the previous upper bound is easily seen to be integrable over $z_0 \ge 2$.
\medskip

\noindent
\textbf{II. Case $\gamma_0 \in (-1/2, 0)$.} 
Fix $\eps\in(0,1/12)$ sufficiently small such that 
$-1/2 < \gamma_0 - 2\eps < \gamma_0 + 2\eps < 0$. Consider the following neighbourhood $V_\eps$ of $\theta_0$:
\[
  V_\eps = 
  [\gamma_0 - \eps, \gamma_0 + \eps] \times 
  [\mu_0 - \eps \sigma_0, \mu_0 + \eps \sigma_0] \times 
  \left[ \frac{\sigma_0}{1+\eps}, \frac{\sigma_0}{1-\eps} \right].
\]

The support of $P_{\theta_0}$ is $(-\infty, \mu_0 + \sigma_0 / \abs{\gamma_0})$. We split this set into two intervals:
\[
  (-\infty,  \mu_0-2\sigma_0], \qquad
  (\mu_0 -2 \sigma_0, \mu_0 + \sigma_0/\abs{\gamma_0}).
\]
\medskip

\noindent\textit{II.1.\ The interval $(\mu_0 -2 \sigma_0, \mu_0 + \sigma_0/\abs{\gamma_0})$.}
We follow the construction leading to \eqref{eq:dots} and \eqref{eq:bound}. To this end, we choose $a =a(\gamma_0) \in (1/2,1)$ in such a way that 
\[
\frac{\abs{\gamma_0}}{1-\abs{\gamma_0}} < a < \frac{1}{2(1-\abs{\gamma_0})}.
\]
Define
\[
  \dot{p}_a (x) =   \sup_{\theta \in  V} \max_{k \in \{1, 2, 3\}} \abs{ \partial_{\theta_k} p_\theta^a(x) }.
\]
Our choice of $a$ entails that $\gamma_0 > -a / (1+a)$. Hence, in view of \eqref{eq:unibound}, the function $\dot{p}_a$ is bounded on the interval $(\mu_0 -2 \sigma_0, \mu_0 + \sigma_0/\abs{\gamma_0})$. We need to show that the integral in \eqref{eq:bound} is finite when we restrict the domain to  $(\mu_0 -2 \sigma_0, \mu_0 + \sigma_0/\abs{\gamma_0})$. It is then sufficient to show that the function $p_{\theta_0}^{1-2a}$ is integrable on that set. Write down the integral and make a change of variable $z_0=(x-\mu_0)/\sigma_0$ to obtain that
\begin{align*}
 \int_{\mu_0 -2 \sigma_0}^{\mu_0 +  \sigma_0/\abs{\gamma_0}} p_{\theta_0}^{1-2a}(x) \, dx  
=&\
\int_{-2}^{1/\abs{\gamma_0}} \sigma_0^{2a} \exp\left\{-(1-2a) (1+\gamma_0 z_0) ^{-1/\gamma_0} \right\} \\
& \hspace{4.5cm} 
\times (1+\gamma_0 z_0)^{(1-2a)(1/\abs{\gamma_0}-1)}\, dz_0 \\
\lesssim &\
\int_{-2}^{1/\abs{\gamma_0}} (1+\gamma_0 z_0)^{(1-2a)(1/\abs{\gamma_0}-1)} \, dz_0.
\end{align*}
The proportionality constant in the last inequality only depends on $\theta_0$.
The right-hand side is finite since the exponent is larger than $-1$ by our choice of $a$.

\medskip

\noindent\textit{II.2.\ The interval $(-\infty, \mu_0 - 2 \sigma_0]$.}
We construct $\dot{m}(x)$ for $x \le \mu_0 - 2 \sigma_0$, i.e., for $z_0 = (x - \mu_0)/\sigma_0 \le -2$. We will again use the construction leading to \eqref{eq:dots} and \eqref{eq:bound}, this time choosing $a=1/2$. In that case, it is sufficient to construct $\dot{p}_{1/2}$ such that it is square-integrable on $(-\infty, \mu_0 - 2 \sigma_0)$ with respect to the Lebesgue measure. 

For $x \le \mu_0-2\sigma_0$ and for $\theta \in V_\eps$ we have $p_\theta(x) > 0$ and, by Lemma~\ref{lem:relerr}, $\abs{z/z_0-1} \le 2\eps$ and therefore $2z_0 \le z \le z_0/2 \le -1$.

Since $z$ is negative, we have $u \ge 1$. The density satisfies
\begin{align*}
  p_\theta(x) 
&= \frac{1}{\sigma} e^{-u} u^{\gamma+1} 
%  \le \frac{1+\eps_0}{\sigma_0} 
\lesssim
  e^{-u} u.
\end{align*}
By Lemmas~\ref{lem:score:bounds:mu}, \ref{lem:score:bounds:sigma} and \ref{lem:score:bounds}, the three score functions $\partial_{\theta_k} \ell_\theta(x)$ can all be bounded by a multiple of $u^2$, the proportionality constant neither depending on $x \le \mu_0 - 2 \sigma_0$ nor on $\theta \in V_\eps$. Hence,  for all $k \in \{1, 2, 3\}$, all $\theta \in V_\eps$ and all $x \le \mu_0 - 2 \sigma_0$,
\begin{align*}
  p_\theta (x) \, \abs{ \partial_{\theta_k} \ell_\theta(x) }^2 
   \lesssim 
   e^{-u} u^{5}  \lesssim e^{-u/2}
%  & \le
%  c(\theta_0) \, e^{-u} u^{4} (\log u)^2 
\end{align*}
(using that $\sup_{u \ge 1} u^m e^{-u/2} < \infty$ for any scalar $m$). Since $\gamma \ge \gamma_0 - \eps$ and $ z\le z_0/2 < 0$, we have $u \ge \{ 1 + (\gamma_0 - \eps)  z_0/2 \}^{-1/(\gamma_0 - \eps)}$. Inserting this bound into the last display yields a function which is integrable over $z_0 \in (-\infty, -2)$.
\medskip

\noindent
\textbf{III. Case $ {\gamma_0} = 0$}.
For fixed $\eps \in (0, 1/6]$, consider the following compact neighbourhood of $\theta_0 = (0, \mu_0, \sigma_0)$:
\[
  V_\eps = 
  [-\eps, \eps] \times 
  [ \mu_0 - \sigma_0 \eps, \mu_0 + \sigma_0 \eps ] \times 
  \left[ \frac{\sigma_0}{1+\eps}, \frac{\sigma_0}{1-\eps} \right].
\]
Partition this set in two pieces, according to the sign of $\gamma$:
\begin{align*}
  V_{\eps,+} &= \{ \theta \in V_\eps : \gamma \ge 0 \},  \qquad
  V_{\eps,-} = \{ \theta \in V_\eps : \gamma \le 0 \}.
\end{align*}
The support of the Gumbel distribution is $\R$, which we will decompose into three intervals:
\[
  (-\infty, \mu_0 - \sigma_0 / \eps ], \qquad
  [\mu_0 - \sigma_0/\eps, \mu_0 + \sigma_0 / \eps], \qquad
  [\mu_0 + \sigma_0 / \eps, \infty).
\]
The middle interval is bounded. By \eqref{eq:unibound} with $a=1/2$, we only need to consider the cases $ {z_0} \ge 1/\eps$ and $ {z_0} \le -1/\eps$, where $ {z_0} = (x - \mu_0) / \sigma_0$. Put $ {z} = (x - \mu) / \sigma$ and note that $\abs{z / z_0 - 1} \le 2 \eps$ by Lemma~\ref{lem:relerr}.
\medskip

\noindent
\textit{III.1.\ Case $ {z_0} \ge 1/\eps$}.
We have $ {z_0} \ge 1/\eps \ge 6$ and $4 \le (1-2\eps)  {z_0} \le  {z} \le (1+2\eps)  {z_0}$. Moreover, $0 < u < 1$.
We will write the supremum over $\theta \in V_\eps$ as the maximum of the suprema over $\theta \in V_{\eps,+}$ and $\theta \in V_{\eps,-}$.
\medskip

\noindent
\textit{III.1.1.\ Case $\theta \in V_{\eps,+}$.}
In this case, always $1 + \gamma  {z} >  {1}$. The bounds on the scores in Lemmas~\ref{lem:score:bounds:mu}, \ref{lem:score:bounds:sigma}, and \ref{lem:score:bounds} imply that\begin{align*}
  \abs{ \partial_\gamma \ell_\theta(x) }
  &\lesssim  {z}^2 \lesssim  {z_0}^2, \qquad
  \abs{ \partial_\mu \ell_\theta(x) }
  \lesssim 1, \qquad
  \abs{ \partial_\sigma \ell_\theta(x) }
  \lesssim  {z} \lesssim  {z_0}.
\end{align*}
Further, we have
\begin{align*}
  p_\theta(x) 
  &\lesssim (1 + \gamma  {z})^{-1/\gamma - 1} 
  \le (1 + \gamma  {z})^{-1/\gamma} 
  \le (1 + \eps  {z})^{-1/\eps} 
  \le \{ 1 + \eps (1-2\eps)  {z_0} \}^{-6}.
\end{align*}
It follows that, for each $k \in \{1, 2, 3\}$,
\[
  \sup_{\theta \in V_{\eps,+}} p_\theta(x) \abs{ \partial_{\theta_k} \ell_\theta(x) }^2
  \lesssim \{ 1 + \eps (1-2\eps)  {z_0} \}^{-6} z_0^4.
\]
The right-hand side is integrable over $ {z_0} \in [1/\eps, \infty)$.

\medskip
\noindent
\textit{III.1.2.\ Case $\theta \in V_{\eps,-}$.} 
If $-\eps \le \gamma \le -1/ {z}$, then $1 + \gamma  {z} \le 0$ and thus $p_\theta(x) = 0$. So suppose $\gamma > - 1/ {z}$. By Lemmas~\ref{lem:score:bounds:mu}, \ref{lem:score:bounds:sigma}, and \ref{lem:score:bounds}, the scores can be bounded as follows:
\begin{equation}
\label{eq:gamma-:z+}
  \max_{k \in \{1, 2, 3\}} \abs{ \partial_{\theta_k} \ell_\theta(x) } \lesssim \frac{ {z}^2}{1 + \gamma  {z}}.
\end{equation}
Moreover, the density is bounded by
\[
  p_\theta(x) \lesssim (1 + \gamma  {z})^{-1/\gamma - 1}.
\]
Hence, for $k \in \{1, 2, 3\}$, since $\gamma \mapsto (1 + \gamma  {z})^{-1/\gamma}$ is increasing in $\gamma$  {on $\{\gamma : 1+\gamma z\}$},
\begin{align*}
  \sup_{\theta \in V_{\eps,+}} p_\theta(x) \, \abs{ \partial_{\theta_k} \ell_\theta(x) }^2
  &\lesssim  {z}^4 \, (1 + \gamma  {z})^{-1/\gamma - 3} \\
  &=  {z}^4 \, \{ (1 + \gamma  {z})^{-1/\gamma} \}^{1 + 3\gamma} 
  \le  {z}^4 \, e^{-(1 + 3\gamma)  {z}} 
  \le  {z}^4 \, e^{- {z}/2} 
  \lesssim e^{- {z}/4},
\end{align*}
as $\sup_{ {z} \ge 0}  {z}^4 e^{- {z}/4} < \infty$. Bounding $ {z}$ in $e^{-z/4}$ from below by $ {z_0} / 2$ yields a function which is integrable over $ {z_0} \in [1/\eps, \infty)$.

\medskip
\noindent
\textit{III.2.\ Case $ {z_0} \le - 1/\eps$.}
We have $ {z_0} \le -1/\eps \le -6$ and, by Lemma~\ref{lem:relerr}, $(1+2\eps) {z_0} \le  {z} \le (1-2\eps) {z_0}  \le -4$. Moreover, $u > 1$.

\medskip
\noindent
\textit{III.2.1.\ Case $\theta \in V_{\eps,+}$.}
If $\gamma \ge 1 / \abs{ {z}}$, then $p_\theta(x) = 0$. Assume $0 \le \gamma < 1 / \abs{ {z}}$, so that $1 + \gamma  {z} > 0$. By \eqref{eq:scoregamma:+-},
\begin{align} \label{eq:gamma0+-1}
  \abs{ \partial_\gamma \ell_\theta(x) } \leq u^{1+\gamma} \{ (\log u)^2 + \log u) \} \lesssim u^{1 + 2\eps}.
\end{align}
Further, by Lemmas~\ref{lem:score:bounds:mu} and \ref{lem:score:bounds:sigma},
\begin{align}\label{eq:gamma0+-2}
  \abs{ \partial_\mu \ell_\theta(x) } &\lesssim u^{1+\gamma} \le u^{1+\eps}, \qquad
  \abs{ \partial_\sigma \ell_\theta(x) } \lesssim u^{1 + \gamma} \{ \log(u) + 1 \} \lesssim u^{1 + 2\eps}.
\end{align}
The density is bounded by
\[
  p_\theta(x) \lesssim e^{-u} u^{1+\gamma}.
\]
In total, since $\sup_{u \ge 1} u^m e^{-u/2} < \infty$ for any scalar $m$,
\[
  p_\theta(x) \, \abs{ \partial_{\theta_k} \ell_\theta(x) }^2
  \lesssim e^{-u} u^{(1 + \gamma) + 2(1 + 2\eps)}
  \lesssim e^{-u} u^{3 + 5\eps}
  \lesssim e^{-u/2}.
\]
Since $\gamma \mapsto (1 + \gamma  {z})^{-1/\gamma}$ is increasing in $\gamma$, a lower bound for $u$ is given by $u \ge e^{- {z}} = e^{\abs{ {z}}} \ge e^{\abs{ {z_0}}/2}$. Plugging this bound into $e^{-u/2}$ yields a function which is integrable over $ {z_0} \in (-\infty, -1/\eps]$.

\medskip
\noindent
\textit{III.2.2.\ Case $\theta \in V_{\eps,-}$.}
By Lemmas~\ref{lem:score:bounds:mu}, \ref{lem:score:bounds:sigma} and \ref{lem:score:bounds},
\begin{align*}
  \abs{ \partial_\gamma \ell_\theta(x) } \lesssim u^{1+\eps}, \qquad
  \abs{ \partial_\mu \ell_\theta(x) } &\lesssim u^{ {1+\eps}},  \qquad
  \abs{ \partial_\sigma \ell_\theta(x) } \lesssim u \, \log(u).
\end{align*}
The density is bounded by
\[
  p_\theta(x) \lesssim e^{-u} \, u^{1+\gamma}.
\]
We get
\[
  \sup_{\theta \in V_{\eps,-}} p_\theta(x) \, \abs{ \partial_{\theta_k} \ell_\theta(x) }^2
  \lesssim e^{-u} \, u^{3 + 3\eps} \lesssim e^{-u/2}.
\]
Further, $u$ can be bounded from below by $( 1 + (-\eps) (1-2\eps) z_0 )^{-1/(-\eps)}$. Inserting this into $e^{-u/2}$ yields an integrable function in $ {z_0} \in (-\infty, -1/\eps]$.
\end{proof}

\begin{lemma}
\label{lem:second_order_expansion}
Let $P_\theta$ denote the three-parameter GEV distribution with parameter $\theta$ and let $m_\theta$ be as in \eqref{eq:m}. For fixed $\theta_0$, the map $\theta \mapsto P_{\theta_0} m_\theta$ attains a unique maximum over $(-1/2, \infty) \times \R \times (0,\infty)$ at $\theta = \theta_0$. If moreover $\gamma_0 > -1/2$, then whenever $h_t \to h$ in $\reals^k$ as $t \to 0$, we have
\[
  t^{-2} P_{\theta_0} \bigl( m_{\theta_0 + t h_t} - m_{\theta_0} \bigr)
  \to
  - \frac{1}{4} h^T I_{\theta_0} h,
  \qquad t \to 0.
\]
\end{lemma}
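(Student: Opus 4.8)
The plan is to prove the two assertions separately: the first by a Jensen/identifiability argument, the second by combining differentiability in quadratic mean (Proposition~\ref{prop:dqm}) with a careful second-order expansion of the logarithm.

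\emph{Unique maximiser.} Since $(p_{\theta_0}+p_{\theta_0})/(2p_{\theta_0})=1$ on $\{p_{\theta_0}>0\}$, we have $m_{\theta_0}\equiv 0$ there and hence $P_{\theta_0}m_{\theta_0}=0$. For an arbitrary $\theta$, the inequality $\log t\le t-1$ (with equality only at $t=1$) applied with $t=\tfrac12(p_\theta/p_{\theta_0}+1)$ gives $m_\theta\le p_\theta/p_{\theta_0}-1$ pointwise on $\{p_{\theta_0}>0\}$, so that
\[
  P_{\theta_0}m_\theta\le\int_{\{p_{\theta_0}>0\}}p_\theta\,\diff\mu-1\le 0=P_{\theta_0}m_{\theta_0}.
\]
If $P_{\theta_0}m_\theta=0$, equality must hold $P_{\theta_0}$-almost everywhere in both steps, which forces $p_\theta=p_{\theta_0}$ $\mu$-almost everywhere and therefore $\theta=\theta_0$ by identifiability of the GEV family. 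Hence $\theta_0$ is the unique maximiser; this part uses nothing about the sign of $\gamma_0$.

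\emph{Second-order expansion.} Assume $\gamma_0>-1/2$, so by Proposition~\ref{prop:dqm} the family is differentiable in quadratic mean at $\theta_0$, with score $\dot\ell_{\theta_0}$ and Fisher information $I_{\theta_0}$. For $h$ near $0$ set $r_h=\sqrt{p_{\theta_0+h}/p_{\theta_0}}$ on $\{p_{\theta_0}>0\}$, so that $m_{\theta_0+h}=2\log\bigl((r_h^2+1)/2\bigr)=2(r_h-1)+\phi(r_h)$ with $\phi(r)=2\log\bigl((r^2+1)/2\bigr)-2(r-1)$. Using $P_{\theta_0}r_h=\int\sqrt{p_{\theta_0+h}p_{\theta_0}}\,\diff\mu=1-\tfrac12\int(\sqrt{p_{\theta_0+h}}-\sqrt{p_{\theta_0}})^2\,\diff\mu$, differentiability in quadratic mean yields
\[
  P_{\theta_0}\bigl[2(r_h-1)\bigr]=-\int\bigl(\sqrt{p_{\theta_0+h}}-\sqrt{p_{\theta_0}}\bigr)^2\,\diff\mu=-\tfrac14 h^T I_{\theta_0}h+o(\norm{h}^2),\qquad h\to 0.
\]
It then remains to show $P_{\theta_0}[\phi(r_h)]=o(\norm{h}^2)$; substituting $h=t h_t$, dividing by $t^2$, and using $m_{\theta_0}\equiv0$, continuity of $\theta\mapsto I_\theta$ and $h_t\to h$ gives the stated limit $-\tfrac14 h^T I_{\theta_0}h$.

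\emph{The remainder — the main obstacle.} A short computation gives $\phi'(r)=-2(r-1)^2/(r^2+1)\le 0$, so $\phi$ is non-increasing with $\phi(1)=0$; integrating $\abs{\phi'}$ yields the elementary bound $\abs{\phi(r)}\le C(r-1)^2\min(\abs{r-1},1)$ for all $r\ge 0$ and a universal constant $C$. Hence
\[
  P_{\theta_0}\bigl[\abs{\phi(r_h)}\bigr]\le C\norm{h}^2\,P_{\theta_0}\Bigl[\bigl((r_h-1)/\norm{h}\bigr)^2\min(\abs{r_h-1},1)\Bigr],
\]
and the crux is that the last expectation tends to $0$ as $h\to 0$. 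Along any sequence $h_n\to 0$ write $h_n=\norm{h_n}e_n$ with $\norm{e_n}=1$; differentiability in quadratic mean shows $(r_{h_n}-1)/\norm{h_n}-\tfrac12 e_n^T\dot\ell_{\theta_0}\to 0$ in $L_2(P_{\theta_0})$, while $\{\tfrac12 e^T\dot\ell_{\theta_0}:\norm{e}=1\}$ is dominated by $\tfrac12\norm{\dot\ell_{\theta_0}}\in L_2(P_{\theta_0})$, so the squares $\{((r_{h_n}-1)/\norm{h_n})^2\}_n$ are uniformly $P_{\theta_0}$-integrable. Since also $r_{h_n}\to 1$ in $P_{\theta_0}$-probability, the bounded factor $\min(\abs{r_{h_n}-1},1)$ tends to $0$ in probability, and the product of a uniformly integrable sequence with a bounded sequence converging to $0$ in probability has expectation tending to $0$. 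This gives $P_{\theta_0}[\phi(r_h)]=o(\norm{h}^2)$, completing the proof. Note that the naive bound $\abs{\phi(r)}\le C(r-1)^2$ only produces $O(\norm{h}^2)$; the refinement carrying the factor $\min(\abs{r-1},1)$, together with the uniform-integrability argument powered by differentiability in quadratic mean, is exactly what upgrades this to $o(\norm{h}^2)$.
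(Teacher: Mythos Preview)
Your proof is correct and follows exactly the route the paper points to: Lemma~5.35 in \cite{vdV98} for the unique maximiser (your Jensen argument is that lemma, specialised to the mixtures $(p_\theta+p_{\theta_0})/2$), and the DQM/Hellinger-affinity technique behind Theorem~5.39 in \cite{vdV98} for the second-order expansion; you have simply written out the details the paper leaves to these references. One cosmetic slip: where you invoke ``continuity of $\theta\mapsto I_\theta$'' you only need, and only use, continuity of the quadratic form $h\mapsto h^T I_{\theta_0} h$ in $h$.
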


\begin{proof}
The fact that $\theta_0$ is the unique maximizer of the map $\theta \mapsto P_{\theta_0}$ follows from Lemma~5.35 in \cite{vdV98} applied to the mixture densities $(p_\theta + p_{\theta_0})/2$.

The second-order Taylor expansion can now be shown along similar lines as in the proof of Theo\-rem~5.39 in \cite{vdV98}. In fact, the same technique was used to show \eqref{eq:2ndd} in the proof of Proposition~\ref{prop:mlelip}.
\end{proof}

% ==========================================
\section{Remaining steps for the proof of Proposition~\ref{prop:an}}
% ------------------------------------------
\label{app:an}

In view of the outline of the proof given right after the statement of the Proposition~\ref{prop:an}, it remains to check the condition on the support in \eqref{eq:suppeps} and the Lipschitz property \eqref{eq:lip}. This is the content of the present section.

\begin{lemma}
\label{lem:suppgev}
For any $\theta_0 \in (-1/2, \infty) \times \R \times (0,\infty)$, the three-parameter GEV family satisfies Condition~\eqref{eq:suppeps}.
\end{lemma}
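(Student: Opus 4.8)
The plan is to describe $\Xc \setminus \bar S(\eps)$ explicitly and to estimate its $P_{\theta_0}$-mass separately in the three regimes $\gamma_0>0$, $\gamma_0=0$ and $-1/2<\gamma_0<0$, with the key point being that the relevant mass is polynomially small only in the last case, where the exponent is $1/|\gamma_0|>2$.

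First, recall that $\bar S(\eps)=\{x:p_\theta(x)>0\text{ for all }\theta\in U_\eps(\theta_0)\}$, so that $\Xc\setminus\bar S(\eps)=\bigcup_{\theta\in U_\eps(\theta_0)}S_\theta^c$; since $P_{\theta_0}$ is carried by $S_{\theta_0}$, it suffices to bound $P_{\theta_0}\bigl(S_{\theta_0}\cap\bigcup_{\theta\in U_\eps(\theta_0)}S_\theta^c\bigr)$. When $\gamma_0\ne 0$ and $\eps<|\gamma_0|$, the shape parameter keeps a constant sign on $U_\eps(\theta_0)$, so each $S_\theta^c$ is a closed half-line with endpoint $\mu-\sigma/\gamma$. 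Enlarging $U_\eps(\theta_0)$ to the surrounding cube $[\gamma_0-\eps,\gamma_0+\eps]\times[\mu_0-\eps,\mu_0+\eps]\times[\sigma_0-\eps,\sigma_0+\eps]$ and optimizing $\mu-\sigma/\gamma$ over its corners, one finds that $S_{\theta_0}\cap\bigcup_\theta S_\theta^c$ is contained in an interval of length $C(\theta_0)\,\eps+o(\eps)$ abutting the finite endpoint $\omega_0=\mu_0-\sigma_0/\gamma_0$ of $S_{\theta_0}$: it is of the form $(\omega_0,\omega_0+O(\eps)]$ if $\gamma_0>0$ and $[\omega_0-O(\eps),\omega_0)$ if $\gamma_0<0$. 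For $\gamma_0=0$ the same cube optimization shows instead that the endpoints $\mu-\sigma/\gamma$ escape to $\pm\infty$ at rate $\sigma_0/\eps$, so $S_{\theta_0}\cap\bigcup_\theta S_\theta^c\subseteq(-\infty,\mu_0-\sigma_0/\eps+O(1)]\cup[\mu_0+\sigma_0/\eps+O(1),\infty)$.

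Second, I evaluate the corresponding masses from the closed form of $G_{\theta_0}$. For $\gamma_0>0$, the substitution $x=\omega_0+\delta$ gives $1+\gamma_0(x-\mu_0)/\sigma_0=\gamma_0\delta/\sigma_0$, hence $G_{\theta_0}(\omega_0+\delta)=\exp\{-(\gamma_0\delta/\sigma_0)^{-1/\gamma_0}\}$; since $G_{\theta_0}$ vanishes on $(-\infty,\omega_0]$, the $P_{\theta_0}$-mass of $(\omega_0,\omega_0+O(\eps)]$ is $\exp\{-c\,\eps^{-1/\gamma_0}\}$, which is $o(\eps^N)$ for every $N$. For $\gamma_0=0$ one uses $G_{\theta_0}(x)=\exp\{-e^{-(x-\mu_0)/\sigma_0}\}$ and $1-G_{\theta_0}(x)\le e^{-(x-\mu_0)/\sigma_0}$: at a point at distance $\sim 1/\eps$ from $\mu_0$ both tails are again $o(\eps^N)$ for every $N$. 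For $-1/2<\gamma_0<0$, the substitution $x=\omega_0-\delta$ gives $1+\gamma_0(x-\mu_0)/\sigma_0=|\gamma_0|\delta/\sigma_0$, hence $G_{\theta_0}(\omega_0-\delta)=\exp\{-(|\gamma_0|\delta/\sigma_0)^{1/|\gamma_0|}\}$, and since $G_{\theta_0}$ tends to $1$ at $\omega_0$ and $1-e^{-z}\le z$, the mass of $[\omega_0-O(\eps),\omega_0)$ is at most $O(\eps^{1/|\gamma_0|})$.

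The only case carrying real content is the last one: there the bound is merely polynomial, of order $\eps^{1/|\gamma_0|}$, and it is $o(\eps^2)$ precisely because $1/|\gamma_0|>2$, i.e.\ because $\gamma_0>-1/2$. This is the same threshold as in Proposition~\ref{prop:dqm}, and it reappears here for the same structural reason: the Weibull-type density carries too much mass near its finite endpoint when $\gamma_0\le-1/2$. I do not anticipate a genuine obstacle; the only care needed is the bookkeeping of the cube-corner optimizations and checking that the intervals produced really have length $O(\eps)$ and abut the correct endpoint of $S_{\theta_0}$.
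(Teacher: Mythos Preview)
Your proposal is correct and follows essentially the same approach as the paper: split by the sign of $\gamma_0$, identify $\Xc\setminus\bar S(\eps)$ as an interval of length $O(\eps)$ at the finite endpoint $\omega_0=\mu_0-\sigma_0/\gamma_0$ (or as two far tails when $\gamma_0=0$), and compute the $G_{\theta_0}$-mass directly from the closed form, obtaining $O(\eps^{1/|\gamma_0|})$ in the Weibull case and something exponentially small otherwise. The paper obtains the $O(\eps)$ bound on the interval length via differentiability of $t\mapsto h_\pm(t)$ at $t=0$ rather than your cube-corner optimization, but the content is the same.
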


\begin{proof}
First, consider $\theta_0\in\nnd= (-\infty, 0) \times \R \times (0,\infty)$ such that $\gamma_0> -1/2$.  Let $\omega(\theta) = \mu - \sigma / \gamma = \mu + \sigma / \abs{\gamma}$ and $\omega_0 = \omega(\theta_0)$. Since $\theta \mapsto \omega(\theta)$ is increasing in each component of $\theta \in \nnd$, we have
\begin{align*} %\label{eq:support:mono}
  h_-(\eps) :=  \inf \{ \omega(\theta) : \theta \in U_\eps(\theta_0) \}
  =
  \omega(\gamma_0-\eps, \mu_0-\eps, \sigma_0-\eps)
\end{align*}
whence $\bar S(\eps) = (- \infty, h_-(\eps))$. The function $t \mapsto h_-(t)$ is continuously differentiable on a neighbourhood of $t = 0$ and with negative derivative at $t = 0$. Hence, we can find constants $0<b_1<c_1$ and $t_1>0$ such that
\begin{equation}
\label{eq:omega-}
  \omega_0 - t \cdot c_1 \sigma_0/\abs{\gamma_0} \le h_-(t)  \le \omega_0 - t \cdot b_1 \sigma_0/\abs{\gamma_0},
  \qquad t \in [0, t_1].
\end{equation}
As a consequence of \eqref{eq:omega-}, for sufficiently small $\eps>0$,
\[
\Xc \setminus \bar S(\eps) \subset [ \omega_0   -  \eps c_1 \sigma_0 / \abs{\gamma_0}, \infty).
\]
Therefore, using the substitution $u = \{1+\gamma_0 (x - \mu_0)/\sigma_0\}^{1/\abs{\gamma_0}}$,
\begin{align*}
P_{\theta_0} (\Xc \setminus \bar S(\eps) ) 
\le
\int_{ \omega_0   - c_1 \eps \sigma_0 / \abs{\gamma_0}}^{\omega_0} p_{\theta_0}(x) \, dx
=
\int_0^{(\eps c_1)^{1/\abs{\gamma_0}}} e^{-u} \, du =O( \eps^{1/\abs{\gamma_0}}) = o(\eps^2)
\end{align*}
as $\eps\downarrow 0$, since $-1/2 < {\gamma_0}<1$.

Now, consider $\theta_0 \in\pnd=(0,\infty) \times \R \times (0,\infty)$, i.e.,  $\gamma_0 > 0$. Then, for any $\eps < \min(\sigma_0, \gamma_0)$ and any $\theta\in U_\eps(\theta_0)$, we have
\[
\omega(\theta) = \mu - \frac{\sigma}{\gamma} \le \mu_0 + \eps - \frac{\sigma_0 - \eps} {\gamma_0 + \eps} =: h_+(\eps).
\]
whence $\bar S(\eps)=(h_+(\eps), \infty)$. The function $t \mapsto h_+(\eps)$ is continuously differentiable on a neighbourhood of $t=0$, with positive derivative at $t=0$. As a consequence, we can find constants $0<b_2<c_2$ and $t_2>0$ such that
\begin{align} \label{eq:omega+}
\omega_0 +  t \cdot b_2 \sigma_0 /\gamma_0 \le h_+(t) \le \omega_0 + t \cdot c_2 \sigma_0 /\gamma_0 , \qquad t \in [0, t_2]. 
\end{align}
Hence, for sufficiently small $\eps>0$, we obtain that
\[
\Xc \setminus \bar S(\eps) \subset (- \infty, \omega_0 +  \eps  \cdot c_2 \sigma_0 /\gamma_0].
\]
Therefore, using the substitution $u = \{1+\gamma_0 (x - \mu_0)/\sigma_0\}^{-1/\gamma_0}$, 
\begin{align*}
P_{\theta_0} (\Xc \setminus \bar S(\eps) ) 
\le
\int_{ \omega_0}^{\omega_0 + \eps c_2 \sigma_0 /\gamma_0} p_{\theta_0}(x) \, dx
=
\int_{(c_2\eps)^{-1/\gamma_0}}^\infty e^{-u} \, du = \exp \{ - (c_2\eps)^{-1/\gamma_0} \},
\end{align*}
which clearly is of the order $o(\eps^2)$ as $\eps\downarrow 0$, since $\gamma_0 > 0$.

Finally, consider $\theta_0 \in \nd$ with $\gamma_0 = 0$. Then, for any $\eps<\sigma_0/2$, we have
\[
\omega(\theta) = \mu - \frac\sigma\gamma  \le \mu_0 + \eps - \frac{\sigma_0 - \eps}{\eps} =: h_{0,+}(\eps), \qquad \theta \in U_\eps(\theta_0 ) \cap \pnd,
\]
as well as
\[
\omega(\theta) = \mu + \frac\sigma{\abs{\gamma}}  \ge \mu_0 - \eps + \frac{\sigma_0 - \eps}{\eps} =: h_{0,-}(\eps), \qquad \theta \in U_\eps(\theta_0 ) \cap \nnd.
\]
As a consequence, for sufficiently small $\eps$, $\bar S(\eps)= (h_{0,+}(\eps), h_{0,-}(\eps))$. Clearly, we can find constants $d_1, d_2 >0 $ such that $h_{0,+}(\eps) \le -d_1/\eps$  and $h_{0,-}(\eps) \ge d_2/\eps$ for all sufficiently small $\eps$. Hence,
\begin{align*}
P_{\theta_0} (\Xc \setminus \bar S(\eps) ) 
&\le
\int_{-\infty}^{-d_1/\eps} p_{\theta_0}(x) \, dx + \int_{d_2/\eps}^\infty p_{\theta_0}(x) \, dx  \\
&= \exp\left\{ -\exp \left( \frac{d_1}{\sigma_0 \eps} + \frac{\mu_0}{\sigma_0} \right) \right\}  
	+ 1 -  \exp\left\{ -\exp \left( - \frac{d_2}{\sigma_0 \eps} + \frac{\mu_0}{\sigma_0} \right) \right\},
\end{align*}
which can be easily seen to be of the order $o(\eps^2)$ as $\eps \downarrow 0$, since $1 - \exp(-y) = (1 + o(1)) y$ as $y \to 0$.
\end{proof}

\begin{lemma}
\label{lem:lipgev}
For any $\theta_0 \in (-1/2, \infty) \times \R \times (0,\infty)$, the three-parameter GEV family satisfies Condition~\eqref{eq:lip} with $\Theta = (-1/2,\infty) \times \R \times (0,\infty)$ (and hence also with any compact subset  containing $\theta_0$ in its interior). 
\end{lemma}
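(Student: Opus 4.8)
The plan is to reduce condition~\eqref{eq:lip} to the square-integrability of a single envelope of the score vector, and to verify that integrability by a case analysis parallel to the proof of Lemma~\ref{lem:Lipschitz}. Fix $\theta_0 = (\gamma_0, \mu_0, \sigma_0)$ with $\gamma_0 > -1/2$, and write $z_0 = (x - \mu_0)/\sigma_0$ and $z = (x - \mu)/\sigma$ for $\theta = (\gamma, \mu, \sigma)$. The starting point is that if $x \in \bar S(2\eps)$ and $\theta_1, \theta_2 \in U_\eps(\theta_0)$, then the segment joining $\theta_1$ and $\theta_2$ lies in $U_\eps(\theta_0) \subset U_{2\eps}(\theta_0)$, so $p_\theta(x) > 0$ along it and $t \mapsto \ell_{(1 - t)\theta_1 + t \theta_2}(x)$ is continuously differentiable; the mean value theorem gives $\abs{ \ell_{\theta_1}(x) - \ell_{\theta_2}(x) } \le \norm{\theta_1 - \theta_2} \sup_{\theta \in U_\eps(\theta_0)} \norm{\dot\ell_\theta(x)}$. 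It therefore suffices to exhibit $\dot\ell \in L_2(P_{\theta_0})$ with $\dot\ell(x) \ge \sup_{\theta \in U_\eps(\theta_0)} \norm{\dot\ell_\theta(x)}$ whenever $x \in \bar S(2\eps)$ and $\eps < \eps_0$, for some fixed $\eps_0 > 0$ that we are free to choose small.

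The point of restricting $x$ to $\bar S(2\eps)$ (rather than to all of $S_{\theta_0}$, as had to be done for $m_\theta$ in Lemma~\ref{lem:Lipschitz}) is that it keeps $x$ uniformly away from the boundary of the support: with $\rho(x) = \operatorname{dist}(\theta_0, \{\theta : p_\theta(x) = 0\})$, the forbidden set $\{\theta : p_\theta(x) = 0\}$ is the half-space $\{\sigma + \gamma(x - \mu) \le 0\}$, and $x \in \bar S(2\eps)$ is precisely $\rho(x) \ge 2\eps$. For $\norm{\theta - \theta_0} \le \tfrac12 \rho(x)$ one then has $\operatorname{dist}(\theta, \{p_\cdot(x) = 0\}) \ge \tfrac12 \rho(x) > 0$, and since this distance is, up to bounded ($\theta_0$-dependent) factors, a multiple of $1 + \gamma z$, the quantities $1 + \gamma z$ and $1 + \gamma_0 z_0$ are comparable on that ball (the one-sided bound $z \asymp z_0$ for $\abs{z_0} \ge 2$ from Lemma~\ref{lem:relerr} is also used here). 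So I would set $\dot\ell(x) = \sup \{ \norm{\dot\ell_\theta(x)} : \norm{\theta - \theta_0} \le \min(\eps_0, \tfrac12 \rho(x)) \}$. This is a supremum of a continuous function over a compact ball contained in $\{ p_\cdot(x) > 0 \}$, hence finite for each $x$; and it dominates $\sup_{\theta \in U_\eps(\theta_0)} \norm{\dot\ell_\theta(x)}$ whenever $x \in \bar S(2\eps)$ and $\eps < \eps_0$, so everything reduces to proving $P_{\theta_0} \dot\ell^2 < \infty$.

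For that last step I would decompose $S_{\theta_0}$ by the sign of $\gamma_0$ and, within each case, into a bounded bulk interval, the tail(s), and --- when $\gamma_0 \ne 0$ --- a neighbourhood of the finite endpoint of the support, exactly as in the proof of Lemma~\ref{lem:Lipschitz}. On the bulk interval $\dot\ell$ is bounded, because there $1 + \gamma z$ stays away from $0$ uniformly over the relevant $\theta$, so $\dot\ell_\theta(x)$ is a continuous function of $(x, \theta)$ on a compact set by~\eqref{eq:scoregamma}--\eqref{eq:scoresigma}, and a bounded interval then contributes a finite amount. On the tails and near the endpoint I would bound $\norm{\dot\ell_\theta(x)}$ by the estimates of Lemmas~\ref{lem:score:bounds:mu}, \ref{lem:score:bounds:sigma} and~\ref{lem:score:bounds}, use $1 + \gamma z \asymp 1 + \gamma_0 z_0$ (and $z \asymp z_0$, hence $\log u_\gamma(z) \asymp \log u_{\gamma_0}(z_0)$) to turn these into bounds in terms of $(\gamma_0, z_0, u_0)$ up to constants, and integrate the squares against $p_{\theta_0}(x)\,\diff x$. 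The tails, the lower-endpoint region when $\gamma_0 > 0$, and the entire Gumbel case go through by estimates of the same flavour as those in cases~I, II.2 and~III of Lemma~\ref{lem:Lipschitz}: in each of them $p_{\theta_0}$ either carries the factor $e^{-u_0}$ with $u_0 \to \infty$ (near the lower endpoint when $\gamma_0 > 0$, and in the tail on which the support is unbounded), which swamps any polynomial or exponential growth of the score, or the score grows only polynomially in $z_0$ while $p_{\theta_0}$ decays exponentially (the upper tail of the Gumbel). The one genuinely delicate region --- and the only place where the hypothesis $\gamma_0 > -1/2$ is used --- is a neighbourhood of the finite upper endpoint $\omega_0 = \mu_0 - \sigma_0/\gamma_0$ when $-1/2 < \gamma_0 < 0$: there the term $\partial_\mu \ell_\theta(x) = \sigma^{-1}(\gamma + 1 - u)\,u^\gamma$ with $u^\gamma = (1 + \gamma z)^{-1}$ dominates all three scores, so $\dot\ell(x) \asymp (1 + \gamma_0 z_0)^{-1}$, whereas $p_{\theta_0}(x) = \sigma_0^{-1} e^{-u_0} u_0^{1 + \gamma_0} \asymp (1 + \gamma_0 z_0)^{(1 + \gamma_0)/\abs{\gamma_0}}$; substituting first $z_0$ and then $w = 1 + \gamma_0 z_0 \downarrow 0$, the contribution of this region to $P_{\theta_0} \dot\ell^2$ is comparable to $\int_{0^+} w^{(1 + \gamma_0)/\abs{\gamma_0} - 2}\,\diff w$, which is finite exactly when the exponent exceeds $-1$, i.e. when $1 + \gamma_0 > \abs{\gamma_0}$, i.e. when $\gamma_0 > -1/2$. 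This is the counterpart, for $\ell_\theta$ itself, of the choice $a = a(\gamma_0) \in (1/2, 1)$ made in case~II.1 of Lemma~\ref{lem:Lipschitz}, and I expect it to be the main obstacle; everything else is bookkeeping. Summing the finitely many contributions yields $P_{\theta_0} \dot\ell^2 < \infty$, and replacing $\Theta$ by a compact subset only shrinks $U_{\eps_0}(\theta_0)$, which changes nothing.
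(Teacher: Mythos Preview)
Your proposal is correct and follows essentially the same route as the paper: the paper's admissible set $\Theta_0(x)$ is precisely your ball of radius $\min(\eps_0,\tfrac12\rho(x))$ around $\theta_0$, and the paper then defines $\dot\ell(x)$ as a constant times $\sup_{\theta\in\Theta_0(x)}\norm{\dot\ell_\theta(x)}$ and verifies $P_{\theta_0}\dot\ell^2<\infty$ by the same three-way split on the sign of $\gamma_0$ and the same subdivision into bulk, tails, and endpoint neighbourhood, using the score bounds of Lemmas~\ref{lem:score:bounds:mu}--\ref{lem:score:bounds}. One small slip: the forbidden set $\{\theta:\sigma+\gamma(x-\mu)\le 0\}$ is a quadric in $\theta$-space, not a half-space, but this is irrelevant for the argument, and the comparison $1+\gamma z\asymp 1+\gamma_0 z_0$ on the admissible ball (which the paper establishes by direct computation in each case) is what you actually use.
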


\begin{proof} 
Throughout, let $\| \cdot \|$ denote the maximum norm on $\R^3$. The proof will be split up in three cases, according to the sign of $\gamma_0$. For some suitable $\eps_0>0$ to be specified below, and for any $x \in S_{\theta_0}$, define a set of \textit{admissible parameter vectors} $\theta$  as
\begin{equation*}
  \Theta_0(x)
  =
  \bigl\{
    \theta \in U_{\eps_0}(\theta_0) :
    \text{if $\theta' \in \Theta$ satisfies $\norm{\theta' - \theta_0} \le 2 \norm{\theta - \theta_0}$, then $x \in S_{\theta'}$}
  \bigr\}.
\end{equation*}
Note that, if $\theta \in \Theta_0(x)$, then the entire ball with center $\theta_0$ and radius $\norm{ \theta-\theta_0 }$ is a subset of $\Theta_0(x)$.
In other words, $\Theta_0(x)$ is the union of all neighbourhoods $U_\eps(\theta_0)$ for those $\eps \in (0, \eps_0]$ such that $x \in S_{\theta'}$ for every $\theta' \in U_{2 \eps}(\theta_0)$. 
 [The multiplicative constant $2$ is not essential and could have been replaced by an arbitrary constant $c_0 > 1$.]

It follows from the definition of $\Theta_0(x)$ that $\theta \mapsto \ell_\theta(x)$ is continuously differentiable on $\Theta_0(x)$. Hence, we may define
\begin{equation}
\label{eq:dotell2}
  \dot{\ell}(x)
  =
  3 \sup \{ \norm{ \dot{\ell}_\theta(x) } : \theta \in \Theta_0(x) \}, \qquad x \in S_{\theta_0},
\end{equation}
and, by the mean-value theorem, immediately obtain that
\begin{equation}
\label{eq:Lipschitz2}
  \forall x \in S_{\theta_0} : 
  \forall \theta_1, \theta_2 \in \Theta_0(x) : \quad
  \abs{ \ell_{\theta_1}(x) - \ell_{\theta_2}(x) }
  \le
  \dot{\ell}(x) \, \norm{ \theta_1 - \theta_2 }.
\end{equation}
[The constant $3$ appears because of the use of the max-norm.] It can be seen easily that \eqref{eq:Lipschitz2} implies \eqref{eq:lip}. The proof will be finished once we will have constructed $\eps_0$ and will have showed that the function in \eqref{eq:dotell2} is square-integrable with respect to $P_{\theta_0}$. 

\medskip
\noindent
\textbf{I. Case $\gamma_0 \in (0, \infty)$.}
Write $\omega_0 = \mu_0 - \sigma_0 / \gamma_0$ and recall that $S_{\theta_0} = (\omega_0, \infty)$. For $x \in S_{\theta_0}$, write $z_0 = z_0(x) = (x - \mu_0) / \sigma_0$. Note that $z_0 > - 1 / \gamma_0$.

For a parameter vector $\theta$ with $\gamma > 0$, we have $S_\theta = (\omega(\theta), \infty)$ where $\omega(\theta) = \mu - \sigma/\gamma$. A monotonicity and differentiability argument similar to that yielding  \eqref{eq:omega+} shows that there exist positive constants $b_0$ and $t_0$ with $b_0 t_0 \le 1/2$ and $t_0 < \min(\sigma_0, \gamma_0)/2$ such that for all $t \in [0, t_0]$,
\[
  h_+(t)
  =
  \sup_{\theta \in U_t(\theta_0)} \omega(\theta)
  =
  \mu_0 + t - \frac{\sigma_0 - t}{\gamma_0 + t}
  \quad
  \left\{
    \begin{array}{rl}
      \le& \omega_0 + (\sigma_0/\gamma_0) \, t b_0 \, (4/3), \\[1em]
      \ge& \omega_0 + (\sigma_0/\gamma_0) \, t b_0.
    \end{array}
  \right.
\]
Partition the interval $S_{\theta_0}$ into three sub-intervals:
\[
  S_{\theta_0} 
  = 
  (\omega_0, \; \omega_0 +  (\sigma_0/\gamma_0) \, t_0 b_0)
  \cup 
  [\omega_0 + (\sigma_0/\gamma_0) \, t_0 b_0 , \; \mu_0 + 2 \sigma_0]
  \cup 
  (\mu_0 + 2 \sigma_0, \; \infty).
\]

Choose $0 < \eps_0 < \min \{ 1/4, t_0/2, \sigma_0/(2\gamma_0) \}$ small enough such that
\begin{align*}
  \sigma / \sigma_0, \gamma / \gamma_0 \in [5/6, 7/6], \quad
  (\sigma/\gamma)/(\sigma_0/\gamma_0) \in [3/4, 4/3],
  \qquad \text{for all $\theta \in U_{\eps_0}(\theta_0)$.}
\end{align*}
We will provide an upper bound for $\dot{\ell}(x)$ in \eqref{eq:dotell2} for each $x$ in each piece of the partition separately. 

First, suppose that $x \in S_{\theta_0}$ is such that $x = \omega_0 + (\sigma_0/\gamma_0) y$ with $0 < y < t_0 b_0$.  If $\theta$ is such that $t := \norm{\theta - \theta_0}$ satisfies $t < \eps_0 \le  t_0/2$ and $2t b_0 \ge y$, then $h_+(2t) \ge \omega_0 + (\sigma_0 / \gamma_0) (2tb_0) \ge x$ and thus $\theta \not\in \Theta_0(x)$. As a consequence, $\Theta_0(x) \subset U_{y/(2b_0)}(\theta_0)$. But for $\theta \in U_{y/(2b_0)}(\theta_0)$, we have $\omega(\theta) \le \omega_0 + (\sigma_0/\gamma_0) (y/2) (4/3) = \omega_0 +  (\sigma_0/\gamma_0)(2/3)y$ and thus $x - \omega(\theta) \ge (\sigma_0 / \gamma_0) (1/3) y$. For such $\theta$, writing $z = (x-\mu)/\sigma$, we find, using $y = 1 + \gamma_0 z_0$, that $1 + \gamma z = (\gamma/\sigma) (x - \omega(\theta)) \ge (\gamma/\sigma) (\sigma_0/\gamma_0) (1/3) y \ge (1/4) (1 + \gamma_0 z_0)$. In addition, $x < \omega_0 + (\sigma_0/\gamma_0) (1/2) = \mu_0 - (\sigma_0/\gamma_0) (1/2) < \mu_0 - \eps_0 \le \mu$ and thus $z < 0$. Apply \eqref{eq:scoremu}, \eqref{eq:scoresigma} and \eqref{eq:scoregamma:+-} to arrive at a $P_{\theta_0}$ square-integrable bound.

Second, suppose that $x \in S_{\theta_0}$ is such that $\omega_0 + (\sigma_0/\gamma_0) \, t_0 b_0 \le x \le \mu_0 + 2 \sigma_0$. The partial derivatives of $\ell_\theta(x)$ with respect to the three components of $\theta$ being continuous functions of $(\theta, x)$ in the compact domain $\{ \theta : \norm{\theta - \theta_0} \le \eps_0 \} \times [\omega_0 + (\sigma_0/\gamma_0) \, t_0 b_0 , \; \mu_0 + 2 \sigma_0]$, they are also uniformly bounded and thus square-integrable with respect to $P_{\theta_0}$.

Third, suppose that $x \in S_{\theta_0}$ is such that $z_0 \ge 2$. Let $\theta \in U_{\eps_0}(\theta_0)$ and write $z = (x - \mu)/\sigma$. By Lemma~\ref{lem:relerr} we have $\abs{(z/z_0) - 1} \le 1/2$ and thus $z_0/2 \le z \le 3z_0/2$. The inequalities in Lemmas~\ref{lem:score:bounds:mu}, \ref{lem:score:bounds:sigma} and \ref{lem:score:bounds} then combine into a $P_{\theta_0}$ square-integrable upper bound for $\dot{\ell}$.

\medskip

\noindent
\textbf{II. Case $\gamma_0\in (-1/2,0)$.}
Write $\omega_0 = \mu_0 + \sigma_0 / \abs{\gamma_0}$ and recall that $S_{\theta_0} = (- \infty, \omega_0)$. For $x \in S_{\theta_0}$, write $z_0 = z_0(x) = (x - \mu_0) / \sigma_0$. Note that $z_0 <  1 / \abs{\gamma_0}$.

For a parameter vector $\theta$ with $\gamma < 0$, we have $S_\theta = (-\infty, \omega(\theta))$ where $\omega(\theta) = \mu + \sigma/\abs{\gamma}$. A monotonicity and differentiability argument similar to that yielding  \eqref{eq:omega-} shows that there exist positive constants $b_0$ and $t_0$ with $b_0 t_0 \le 1/2$ and $t_0 < \min(\sigma_0, \abs{\gamma_0})/2$ such that for all $t \in [0, t_0]$,
\[
  h_-(t)
  =
  \inf_{\theta \in U_t(\theta_0)} \omega(\theta)
  =
  \mu_0 - t - \frac{\sigma_0 - t}{\gamma_0 - t}
  \quad
  \left\{
    \begin{array}{rl}
      \le& \omega_0 - (\sigma_0/\abs{\gamma_0}) \, t b_0, \\[1em]
      \ge& \omega_0 - (\sigma_0/\abs{\gamma_0}) \, t b_0  \, (4/3).
    \end{array}
  \right.
\]
Partition the interval $S_{\theta_0}$ into three sub-intervals:
\[
S_{\theta_0} 
= 
(-\infty, \mu_0 - 2 \sigma_0) \cup
[\mu_0 - 2 \sigma_0, \omega_0 - (\sigma_0/\abs{\gamma_0}) t_0 b_0] \cup
( \omega_0 - (\sigma_0/\abs{\gamma_0}) t_0 b_0, \omega_0).
\]

Choose $0 < \eps_0 < \min \{ 1/4, t_0/2, \sigma_0/(2\abs{\gamma}_0), \sigma_0/7 \}$ small enough such that
\begin{align*}
  \sigma / \sigma_0, \gamma / \gamma_0 \in [5/6, 7/6], \quad
  (\sigma/\gamma)/(\sigma_0/\gamma_0) \in [3/4, 4/3],
  \qquad \text{for all $\theta \in U_{\eps_0}(\theta_0)$.}
\end{align*}
We will provide an upper bound for $\dot{\ell}(x)$ in \eqref{eq:dotell2} for each $x$ in each piece of the partition separately.

First, consider  $x \in (\omega_0 - (\sigma_0/\abs{\gamma_0}) t_0 b_0, \omega_0)$, which can be rewritten as 
$x=\omega_0 - (\sigma_0/\abs{\gamma_0}) y$ for some $0<y<t_0b_0$. Then $\Theta_0(x) \subset U_{y/(2b_0)}(\theta_0)$, for, if $\theta \in U_{\eps_0}(\theta_0) \subset U_{t_0/2}(\theta_0)$  satisfies $t=\|\theta-\theta_0\| \ge y/(2b_0)$, then there exists $\theta'$ with $\|\theta' - \theta_0\| \le  2 t$ such that $\omega(\theta') = h_-(2t)  \le \omega_0 - (\sigma_0/\abs{\gamma_0}) 2 t b_0 \le x$. Now, for $\theta \in U_{y/(2b_0)}(\theta_0)$, we have
$
\omega(\theta) - x \ge \omega_0 - \sigma_0/\abs{\gamma_0} (y/2)(4/3) -  x = \omega_0 - (2/3) (\omega_0 - x) - x = (1/3)(\omega_0 - x) .
$
In addition, $x> \omega_0 - \sigma_0/\abs{\gamma_0}/2 = \mu_0 + \sigma_0/\abs{\gamma_0} > \mu_0 + \eps_0 > \mu$, whence $z=(x-\mu)/\sigma>0$. 
Therefore, as a consequence of Lemmas~\ref{lem:score:bounds:mu}, \ref{lem:score:bounds:sigma} and \ref{lem:score:bounds}, we obtain the upper bound
\[
\| \dot \ell_\theta(x) \| \lesssim \frac{1}{1+\gamma z} =    \frac{\sigma}{\abs{\gamma}} \frac{1}{\omega(\theta) - x}
\le  \frac{(4/3) \sigma_0 }{(1/3) \abs{\gamma_0}} \, \frac{1}{\omega_0 - x}.
\]
Since $\gamma_0 \in (-1/2, 0)$ implies $1/\abs{\gamma_0}> 2$, the bound can be seen to be square-integrable with respect to $P_{\theta_0}$.

Second, consider $x \in [\mu_0 - 2 \sigma_0, \omega_0 - (\sigma_0/\abs{\gamma_0}) t_0 b_0]$.
The partial derivatives of $\ell_\theta(x)$ with respect to the three components of $\theta$ being continuous functions of $(\theta, x)$ in the compact domain $\{ \theta : \norm{\theta - \theta_0} \le \eps_0 \} \times  [\mu_0 - 2 \sigma_0, \omega_0 - (\sigma_0/\abs{\gamma_0}) t_0 b_0]$, they are also uniformly bounded and thus square-integrable with respect to $P_{\theta_0}$.

Third, consider $x \in (-\infty, \mu_0 - 2 \sigma_0)$. Let $\theta \in U_{\eps_0}(\theta_0)$ and write $z = (x - \mu)/\sigma$. By Lemma~\ref{lem:relerr}, we have $\abs{(z/z_0) - 1} \le 1/2$ and thus $3z_0/2 \le z \le z_0/2 \le -1$. The inequalities in Lemmas~\ref{lem:score:bounds:mu}, \ref{lem:score:bounds:sigma} and \ref{lem:score:bounds} then combine into a $P_{\theta_0}$ square-integrable upper bound for $\dot{\ell}$.

\medskip
\noindent
\textbf{III. Case $\gamma_0 = 0$.}   Recall that $S_{\theta_0} = \R$. Find $t_0 > 0$ small enough such that $(1+t_0) t_0 \le \sigma_0/4$. Then we have
\[
  (3/4) \sigma_0/t \le (\sigma_0/t) - t - 1 \le \sigma_0/t, \qquad \text{for $t \in (0, t_0]$.}
\]
It follows that, for all $t \in (0, t_0]$,
\begin{align}
\label{eq:h0+}
h_{0,+}(t) 
&= \sup_{\theta \in U_{t}(\theta_0) \cap \pnd} \omega(\theta) 
= \mu_0 + t - \frac{\sigma_0 - t}{t}
  \quad
  \left\{
    \begin{array}{rl}
      \le&  \mu_0 - (3/4) \sigma_0 / t,\\
      \ge& \mu_0 - \sigma_0/ t,
\end{array}
\right. \\
\label{eq:h0-}
h_{0,-}(t) 
&= \inf_{\theta \in U_{t}(\theta_0) \cap \nnd} \omega(\theta) 
= \mu_0 - t + \frac{\sigma_0 - t}{t}
  \quad
  \left\{
    \begin{array}{rl}
      \le&  \mu_0 + \sigma_0 / t,\\
      \ge&  \mu_0 + (3/4) \sigma_0/ t.
\end{array}
\right.
\end{align}
Choose $0 < \eps_0 < \min \{ 1/4, t_0/2 \}$ small enough such that $ \sigma / \sigma_0 \in [5/6, 7/6]$ whenever $\theta \in U_{\eps_0}(\theta_0)$. Partition the real line into three sub-intervals:
\[
\R
= 
(-\infty, \mu_0 - \sigma_0/\eps_0) \cup
[\mu_0 - \sigma_0/\eps_0, \mu_0 + \sigma_0/\eps_0] \cup
( \mu_0 + \sigma_0/\eps_0, \infty).
\]
We will provide an upper bound for $\dot{\ell}(x)$ in \eqref{eq:dotell2} for each $x$ in each piece of the partition separately. Write $x = \mu_0 + \sigma_0 z_0$.

\smallskip
\noindent
\textit{III.1 -- Case $x<\mu_0 - \sigma_0/\eps_0$.} 
We have $z_0 < -1/\eps_0$. For any $\theta \in \Theta_0(x)  \subset U_{\eps_0}(\theta_0)$, we have $x < \mu_0 - \sigma_0 / \eps_0 < \mu_0 - \eps_0 \le \mu$; note that $\eps_0^2 < t_0^2 < \sigma_0$. Therefore, $z = z_\theta(x) = (x - \mu)/\sigma < 0$ and thus $u = u_\gamma(z) > 1$, see Figure~\ref{fig:ugammaz}.

We claim that $\Theta_0(x) \subset U_{1/(2\abs{z_0})}(\theta_0)$. To prove this, it suffices to show that we can find a point $\theta'$ such that $\norm{\theta' - \theta_0} = 2 / (2\abs{z_0}) = 1 / \abs{z_0}$ and such that $x \notin S_{\theta'}$. But this is easy: just set $\theta' = (1/\abs{z_0}, \mu_0, \sigma_0)$ and note that $1 + (1/\abs{z_0}) z_0 = 1 - 1 = 0$.

Let $\theta \in \Theta_0(x) \subset U_{1/(2\abs{z_0})}(\theta_0)$. Write $z = (x-\mu)/\sigma$. Our choice of $\eps_0$ and the fact that $\abs{z_0} > 1 / \eps_0 > 2$ imply that $\abs{z/z_0 - 1} \le 1/2$; see Lemma~\ref{lem:relerr}. Consider three subcases: $\gamma > 0$, $\gamma = 0$, and $\gamma < 0$. 
\begin{itemize}
\item
Suppose $\gamma > 0$. By \eqref{eq:h0+}, we have $\omega(\theta) \le \mu_0 - (3/4) \sigma_0 (2\abs{z_0}) = \mu_0 + (3/2) \sigma_0 z_0 < \mu_0 + \sigma_0 z_0 = x$. Hence, $x \in S_\theta$. As $\gamma > 0$ and $z < 0$, the bounds in \eqref{eq:gamma0+-1} and \eqref{eq:gamma0+-2} then imply that $\| \dot \ell_\theta(x) \| \lesssim u^{1+2\eps_0}$. We need to bound $u$ from above. Recall that $u_\gamma(z)$ is decreasing in $z$ and increasing in $\gamma$. Moreover, $(3/2)z_0 \le z < 0$ and $0 < \gamma < 1 / (2\abs{z_0})$. We find 
\[ 
  1 
  \le u 
  \le \left(1 + \frac{1}{2\abs{z_0}} \frac{3}{2} z_0 \right)^{- 2\abs{z_0} }
  = \bigl(1 - (3/4) \bigr)^{-2\abs{z_0}}
  = 16^{-z_0}.
\]
For arbitrary $\rho > 1$, the function $z_0 \mapsto \rho^{-z_0}$ is integrable with respect to the standard Gumbel density $z_0 \mapsto \exp(-e^{-z_0}) \, e^{-z_0}$.
\item
Suppose $\gamma = 0$. The expressions for the score functions are $\partial_\gamma \ell_\theta(x) = (1 - e^{-z}) z^2/2 - z$, $\partial_\mu \ell_{\theta}(x) = (1 - e^{-z})/\sigma$, and $\partial_\sigma \ell_\theta(x) = ((1 - e^{-z})z - 1)/\sigma$. Since $(3/2)z_0 \le z \le (1/2)z_0 < 0$ for all $\theta$ considered, these expressions can be easily bounded by $P_{\theta_0}$-square integrable functions.
\item
Suppose $\gamma < 0$. Since $z < 0$, the components of the score vector can be bounded by a multiple of $u \, \max\{z, (\log u)^2\}$. But since $u_\gamma(z)$ is increasing in $\gamma$ and since $\gamma < 0$, we can bound $u$ by its value at $\gamma = 0$, which is $e^{-z}$. Now continue as for the case $\gamma = 0$.
\end{itemize}

\smallskip
\noindent
\textit{III.2 -- Case $\mu_0 - \sigma_0/\eps_0 \le x \le \mu_0 + \sigma_0/\eps_0$.}
This case is trivial, since the three components of the score vector $\dot{\ell}_\theta(x)$ are continuous and thus uniformly bounded on the closure of the bounded domain $\{ (x, \theta) : \abs{x - \mu_0} \le \sigma_0 / \eps_0, \, \theta \in \Theta_0(x) \}$.

\smallskip
\noindent
\textit{III.3 -- Case $x > \mu_0 + \sigma_0/\eps_0$.} 
This case is partially similar to the case $x < \mu_0 - \sigma_0 / \eps_0$. For $\theta \in \Theta_0(x) \subset U_{\eps_0}(\theta_0)$, we have $x > \mu_0 + \eps_0 > \mu$ and thus $z = (x-\mu)/\sigma > 0$ and $0 < u = u_\gamma(z) < 1$. Moreover, $z_0>1/\eps_0$.

We claim that $\Theta_0(x) \subset U_{1/(2z_0)}(\theta_0)$. Indeed, the point $\theta' = (-1/z_0, \mu_0, \sigma_0)$ is such that $\norm{\theta - \theta_0} = 1/z_0 = 2 / (2z_0)$ and still $x \notin S_{\theta'}$.

Let $\theta \in \Theta_0(x) \subset U_{1/(2z_0)}(\theta_0)$ and write $z = (x - \mu)/\sigma$. Again, we have $\abs{ z/z_0 - 1 } \le 1/2$. Consider three subcases: $\gamma > 0$, $\gamma = 0$, and $\gamma < 0$.
\begin{itemize}
\item
Suppose $\gamma > 0$. Since $0 < u < 1$ and $1 + \gamma z > 1$, all three components of the score vector can be bounded by a constant multiple of $z^2$ and thus by a constant multiple of $z_0^2$. Now it suffices to observe that all moments of the Gumbel distribution are finite.
\item
Suppose $\gamma = 0$. Then apply the same reasoning as for the subcase $\gamma = 0$ in the case III.1 above.
\item
Suppose $\gamma < 0$. By \eqref{eq:h0-}, we have $\omega(\theta) \ge \mu_0 + (3/4) \sigma_0 (2 z_0) = \mu_0 + (3/2) \sigma_0 z_0  \le \mu_0 + \sigma_0 z_0 = x$, so that $x \in S_{\theta}$. Since $0 > \gamma > - 1/(2z_0) \ge - 3/(4z) > - 1/z$, the bound in \eqref{eq:gamma-:z+} applies. We need to find a square-integrable bound on $z^2 / (1 + \gamma z)$. But this is immediate, since the denominator is larger than $1 - 3/4 = 1/4$ and the numerator is smaller than a multiple of $z_0^2$. \qedhere
\end{itemize}
\end{proof}

\section*{Acknowledgments}

The authors would like to thank two anonymous referees and an Associate Editor for their constructive comments on an earlier version of this manuscript.

The research by A.\ B\"ucher  has been supported by the Collaborative Research Center ``Statistical modeling of nonlinear dynamic processes'' (SFB 823, Project A7) of the German Research Foundation, which is gratefully acknowledged. 
Parts of this paper were written when A.\ B\"ucher was a visiting professor at TU Dortmund University.

J. Segers gratefully acknowledges funding by contract ``Projet d'Act\-ions de Re\-cher\-che Concert\'ees'' No.\ 12/17-045 of the ``Communaut\'e fran\c{c}aise de Belgique'' and by IAP research network Grant P7/06 of the Belgian government (Belgian Science Policy).

\bibliographystyle{chicago}
\bibliography{biblio}
\end{document}